\documentclass[11pt,reqno]{amsart}

\usepackage{color}
\usepackage{stmaryrd}
\usepackage{cases}
\usepackage{amssymb}
\usepackage{amsmath}
\usepackage{amsfonts}
\usepackage{graphicx}
\usepackage{amsmath,amstext,amsbsy,amssymb, color}

\usepackage[colorlinks=true, linkcolor=blue, citecolor=magenta, urlcolor=blue, backref=page]{hyperref}

\hypersetup{colorlinks,citecolor=blue,filecolor=black,linkcolor=blue,urlcolor=blue}
\usepackage{latexsym}
\usepackage{amsmath}
\usepackage{amssymb}
\usepackage{bm}
\usepackage{graphicx}
\usepackage{wrapfig}
\usepackage{fancybox}

\usepackage{geometry}
\newtheorem{theorem}{Theorem}[section]
\newtheorem{lemma}[theorem]{Lemma}
\newtheorem{proposition}[theorem]{Proposition}
\newtheorem{corollary}[theorem]{Corollary}
\theoremstyle{definition}

\theoremstyle{remark}
\newtheorem{remark}[theorem]{Remark}

\numberwithin{equation}{section} \errorcontextlines=0

\newcommand{\ZZ}{\mathbb Z}

\newcommand{\Imm}{\mathrm{Imm}}

\newcommand{\ot}{\otimes}

\newcommand{\si}{\sigma}

\newcommand{\gl}{\mathfrak{gl}}

\newcommand{\Mat}{\mathrm{Mat}}
\newcommand{\End}{\mathrm{End}}
\newcommand{\Ber}{\mathrm{Ber}}

\newcommand{\Hc}{\mathcal{H}}
\newcommand{\Ec}{\mathcal{E}}
\newcommand{\Tc}{\mathcal{T}}
\newcommand{\ve}{\varepsilon}
\newcommand{\bal}{\begin{aligned}}
\newcommand{\eal}{\end{aligned}}
\newcommand{\beq}{\begin{equation}}
\newcommand{\eeq}{\end{equation}}
\newcommand{\ben}{\begin{equation*}}
\newcommand{\een}{\end{equation*}}
\newcommand{\Sym}{\mathfrak S}
\newcommand{\CC}{\mathbb{C}}

\begin{document}
	
	\title[Quantum super Littlewood correspondences]
	{Quantum super  Littlewood correspondences}
	
	\author{Naihuan Jing}
	\address{Department of Mathematics, North Carolina State University, Raleigh, NC 27695, USA}
	\email{jing@ncsu.edu}
	
		\author{Yinlong Liu}
	\address{Department of Mathematics, Shanghai University, Shanghai 200444, China}
	\email{yinlongliu@shu.edu.cn}
	
	\author{Jian Zhang}
	\address{School of Mathematics and Statistics,
		Central China Normal University, Wuhan, Hubei 430079, China}
	\email{jzhang@ccnu.edu.cn}
	
	\thanks{{\scriptsize
			\hskip -0.6 true cm MSC (2020): Primary: 17A70 Secondary: 17B35, 15A15, 17B66, 05E10
    \newline Keywords:  Littlewood correspondences, immanants, quantum superalgebra, Schur-Weyl duality}}

\begin{abstract}
In this paper, we study the Littlewood theory associated with the quantum super immanants and supersymmetric polynomials, including both the super case and the quantum generalization. In the setting of quantum super Schur-Weyl duality between the quantum superalgebra $U_q(\gl_{m|n})$ and the Iwahori-Hecke algebra $\Hc_r$ of type A, we explicitly construct basis vectors of the ($U_q(\gl_{m|n})$, $\Hc_r$)-bimodule on the tensor product space $(\CC^{m|n})^{\ot r}$.
Using this construction, we interpret the quantum super immanants via weight spaces of covariant tensor representations of $U_q(\gl_{m|n})$.
\end{abstract}
	\maketitle
\section{Introduction}
In the study of the representation theory of the symmetric group and the general linear group,
I.Schur introduced the notion of immanants in his fundamental work \cite{S}, which  plays an important role in understanding the representation theory of these groups. Schur applied a prominent method to obtain the finite dimensional irreducible representations of the general linear group, or equivalently of its Lie algebra from those of symmetric group due to F.Frobenius and A.Young. This result is now known as {\it Schur-Weyl duality}. And then D.E.Littlewood \cite{Li,LR} formulated three correspondences between Schur polynomials and immanants, which explicitly reveal the representation-theoretic information from immanants. The Littlewood correspondences provide an alternative  perspective, different from Schur-Weyl duality, for understanding representations of the symmetric group and the general linear group. Continuing \cite{JLZ1}, we established the quantum analog of Littlewood correspondences for the quantum coordinate algebra. After \cite{JLZ1}, we intend to extend the Littlewood correspondences to the superalgebra.

In this paper, we continue these preceding approaches to establish complete quantum super Littlewood correspondences. Using the primitive idempotents of the Hecke algebra, we introduce the notion of quantum super immanants in the quantum coordinate superalgebra. From the representation theory of the Hecke algebra and the property of supersymmetric polynomials, we prove that the first two Littlewood correspondences hold in this setting.

Littlewood's third correspondence says that the Schur polynomials $S_{\lambda}$ can be explicitly expressed in terms of normalized immanants:
\begin{equation}
S_{\lambda}(\omega_1, \ldots, \omega_n)=\sum_{I} \frac{\Imm_{\chi^{\lambda}}(X_I)}{\alpha(I)},
\end{equation}
where $\omega_i$ are the characteristic roots of the matrix $X$. Unfortunately, these roots are no longer useful for quantum super case, as the concepts of characteristic roots and eigenvalues do not apply to quantum supermatrices. To obtain these  pairwise commutative elements within quantum coordinate superalgebra, we construct a system of equations from the characteristic function (Berezinian).
Solving these equations over an algebraic closure field yields the unique elements,  thereby establishing the quantum super analog of Littlewood correspondence III. Besides, we compute the quantum super Cayley-Hamilton theorem for the case $m=n=1$. The super Cayley-Hamilton theorem ($q\rightarrow 1$) was proved in \cite{UM,UM2}. We conjecture that the general statement also holds in the quantum super setting.

The immanants also have another connection with representation theory. Kostant \cite{Ko} interpreted the immanants using 0-weight spaces for all representations of the special linear group $\mathrm{SL}(n,\CC)$ and provided a trace formula for the $\lambda$-immanants. In this paper, we aim to obtain a supertrace formula for quantum super immanants and interpret it via weight spaces of covariant tensor representations of the quantum superalgebra $U_q(\gl_{m|n})$.

Using the quantum super Schur-Weyl duality developed by Moon \cite{Moo} and Mitsuhashi \cite{Mit}, we consider the actions of the quantum superalgebra $U_q(\gl_{m|n})$ and the Iwahori-Hecke algebra $\Hc_r$ on the tensor product space $(\CC^{m|n})^{\ot r}$. The actions of the generators of $U_q(\gl_{m|n})$ on the Gelfand-Tsetlin bases have been given for essentially typical representations in \cite{PSV} and for covariant tensor representations in \cite{Mo2,SJ,FSZ,Lu}.
Combining these Gelfand-Tsetlin bases with the Young orthonormal basis of Hecke algebra $\Hc_r$,  we explicitly construct  basis vectors  of the $(U_q(\gl_{m|n}),\Hc_r)$-bimodule on $(\CC^{m|n})^{\ot r}$.
Consequently, the weight space interpretation of quantum super immanants follows from the approach in  \cite{JLZ1}.

The paper is organized as follows. In Section 2, we prepare the basic facts of the quantum superalgebra and its coordinate superalgebra. In Section 3, we recall the property of the primitive idempotents of Hecke algebra of type A. We introduce the notion of quantum super immanants and give a supertrace formula in Section 4. In Section 5, we establish the Littlewood correspondences I-III for quantum super immanants.

\section{The quantum superalgebra and its coordinate superalgebra}
Let $\CC^{m|n}$ be the complex super vector space spanned by its $\ZZ_2$-graded basis $\{e_1,\ldots,e_{m+n}\}$. For any $1\leq i\leq m+n$, the parity of $e_i$ is defined as
\[
\bar{i}=\left\{\begin{array}{cc}
0&\text{ if }i\leq m,\\
1&\text{ if }i> m.
\end{array}\right.
\]
The endomorphisms of $\CC^{m|n}$ naturally have $\ZZ_2$-graded basis $\{e_{ij}\mid 1 \leq i,j \leq m+n\}$, where the parity of $e_{ij}$ is $\bar i+\bar j$.

Let $q_i=q^{1-2\bar i}$ for $̄i \in [m + n] = \{1, \ldots,  m + n\}$.
Let $R$ be the matrix in $\mathrm{End}(\CC^{m|n}) \ot\mathrm{End}(\CC^{m|n})$:
\begin{equation}\label{R}
  R=\sum_{i=1}^{m+n}q_i e_{ii}\ot e_{ii}+\sum_{ i\neq j} e_{ii}\ot e_{jj}+(q-q^{-1})\sum_{i<j} (-1)^{\bar j } e_{ij}\ot e_{ji}.
\end{equation}
The  $R$ satisfies the well-known Yang-Baxter equation (YBE):
\begin{equation}\label{YBeq}
  R_{12} R_{13} R_{23}=R_{23} R_{13} R_{12},
\end{equation}
where  $R_{i j} \in \mathrm{End}(\CC^{m|n})  \otimes \mathrm{End}(\CC^{m|n}) \otimes \mathrm{End}(\CC^{m|n}) $  acts on the  $i$th and  $j$th copies of  $\CC^{m|n}$  as  $R$  does on  $\CC^{m|n} \otimes \CC^{m|n}$. In the superalgebra $\mathrm{End}(\CC^{m|n})^{\ot 2}$, we define the permutation operator
\beq
P=\sum_{i,j=1}^{m+n}(-1)^{\bar j}e_{ij}\ot e_{ji}.
\eeq

 There are two $R$-matrices $R^{\pm}$ associated with $R$ by $R^+=PRP$, $R^-=R^{-1}$. Explicitly
\begin{align}
  R^+&=\sum_{i=1}^{m+n}q_i e_{ii}\ot e_{ii}+\sum_{ i\neq j} e_{ii}\ot e_{jj}+(q-q^{-1})\sum_{i>j} (-1)^{\bar j } e_{ij}\ot e_{ji},\\
  R^-&=\sum_{i=1}^{m+n}q_i^{-1} e_{ii}\ot e_{ii}+\sum_{ i\neq j} e_{ii}\ot e_{jj}-(q-q^{-1})\sum_{i<j} (-1)^{\bar j } e_{ij}\ot e_{ji}.
\end{align}

 Let $\mathbf P$ be the free $\ZZ$-lattice of integral weight  with the canonical basis $\{\epsilon_1,\ldots,\epsilon_{m+n}\}$, i.e. $\mathbf P=\oplus_{i=1}^{m+n}\ZZ \epsilon_i$, endowed with a symmetric bilinear form $\langle \epsilon_i,\epsilon_j \rangle=\delta_{ij}$. Let $\Pi=\{\alpha_i=\epsilon_i-\epsilon_{i+1}\mid i=1,\ldots,m+n-1 \}$ and $\Phi=\{\epsilon_i-\epsilon_j \mid 1 \leq i\neq j \leq m+n \}$. Then $\Phi$ realizes the root system of  type $A(m-1,n-1)$ with $\Pi$ a basis of simple roots. Let $H_i=\alpha_i$ if $i \neq m$ and $H_m=\epsilon_m+\epsilon_{m+1}$.

 The quantum superalgebra $U_q(\mathfrak{gl}_{m|n})$ is generated by  $E_i,F_i \ (1\leq i\leq m+n-1)$ and $q^{h} \ (h\in \mathbf P)$ with parity $p(q^h)=p(E_i)=p(F_i)=0$ $(h\in \mathbf P, i\neq m)$, $p(E_m)=p(F_m)=1$ and satisfies the following relations:
\beq
\bal
&q^0=1, \qquad q^{h_1+h_2}=q^{h_1}q^{h_2}, \ \forall h_1,h_2\in \mathbf P,\\
&q^{h}E_i q^{-h}=q^{\langle h,\alpha_i \rangle}E_i,
\qquad
q^{h}F_{i}q^{-h}=q^{-\langle h,\alpha_i \rangle}F_{i},\\
&E_i F_j - (-1)^{p(E_i)p(F_j)} F_j E_i = \delta_{i,j} \frac{q^{(-1)^{\bar i} H_i} - q^{-(-1)^{\bar i} H_i}}{q_i - q_i^{-1}},\\
&E_i E_j - (-1)^{p(E_i)p(E_j)} E_j E_i = 0 \ \text{if} \ |i - j| \geq 2,\\
&F_i F_j - (-1)^{p(F_i)p(F_j)} F_j F_i = 0 \ \text{if} \ |i - j| \geq 2,\\
&E_i^2 E_j - (q_i + q_i^{-1}) E_i E_j E_i + E_j E_i^2 = 0 \ \text{if} \ |i - j| =1, i \neq m,\\
&F_i^2 F_j - (q_i + q_i^{-1}) F_iF_j F_i + F_j F_i^2 = 0 \ \text{if} \ |i - j| =1, i \neq m,\\
&E_m^2=F_m^2=0,\\
&E_m E_{m-1} E_m E_{m+1} + E_m E_{m+1} E_m E_{m-1} + E_{m-1} E_m E_{m+1} E_m \\
&+ E_{m+1} E_m E_{m-1} E_m - (q + q^{-1}) E_m E_{m-1} E_{m+1} E_m = 0,\\
&F_m F_{m-1} F_m F_{m+1} + F_m F_{m+1} F_m F_{m-1} + F_{m-1} F_m F_{m+1} F_m \\
&+ F_{m+1} F_m F_{m-1} F_m - (q + q^{-1}) F_m F_{m-1} F_{m+1} F_m = 0.
\eal
\eeq

The quantum superalgebra $U_q(\mathfrak{gl}_{m|n})$ also has the following RTT presentation. It is generated by $t_{ij}^{\pm}, i,j=1,\ldots,m+n$ with parity $\bar i+ \bar j$ and the following relations:
\beq
\bal
t_{ij}^{+}&=t_{ji}^{-}=0,\ 1 \leq j<i \leq m+n\\
t_{ii}^{+}t_{ii}^{-}&=t_{ii}^{-}t_{ii}^{+}=1,\ 1\leq i \leq m+n\\
RT_1^{\pm}T_2^{\pm}&=T_2^{\pm}T_1^{\pm}R\\
RT_1^{+}T_2^{-}&=T_2^{-}T_1^{+}R
\eal
\eeq
where the matrices $T^{\pm}=(t_{ij}^{\pm})$.
The isomorphism between these two presentations is given by
\beq
\bal
t_{ij}^+=\left\{\begin{array}{ccc}
q^{\epsilon_i}_i, \ &i=j\\
(-1)^{\bar i}(q-q^{-1})q^{\epsilon_i}_i\hat{E}_{ji}, \ &i<j\\
0, \ &i>j\\
\end{array}\right.
\eal
\eeq
and
\beq
\bal
t_{ij}^-=\left\{\begin{array}{ccc}
q^{-\epsilon_{i}}_i, \ &i=j\\
-(-1)^{\bar i}(q-q^{-1})\hat{E}_{ji}q^{-\epsilon_j}_j, \ &i>j\\
0, \ &i<j.\\
\end{array}\right.
\eal
\eeq
where   $\hat{E}_{ij} (i \neq j)$ are determined by the following equations
\beq
\bal
&\hat{E}_{i,i+1}=E_i,\ \ \ \hat{E}_{i+1,i}=F_i,\\
&\hat{E}_{ij}=(-1)^{\bar k}(\hat{E}_{ik}\hat{E}_{kj}-q_k^{\pm 1}\hat{E}_{kj}\hat{E}_{ik}), \ \ i \lessgtr k \lessgtr j,\\
\eal
\eeq

A Hopf superalgebra structure of $U_q(\mathfrak{gl}_{m|n})$ is given by the comultiplication $\Delta: U_q(\mathfrak{gl}_{m|n})\rightarrow U_q(\mathfrak{gl}_{m|n}) \ot U_q(\mathfrak{gl}_{m|n})$ such that
 \beq
\bal
\Delta(E_i)&=E_i\ot 1+k_i \ot E_i,\\
\Delta(F_i)&=F_i\ot k_i^{-1}+ 1 \ot F_i,\\
\Delta(q^h)&=q^h\ot q^h,\\
\Delta(T^{\pm})&=T^{\pm}\ot T^{\pm},
\eal
\eeq
where we set $k_i=q_i^{H_i}$. The antipode
$S: U_q(\mathfrak{gl}_{m|n})\rightarrow U_q(\mathfrak{gl}_{m|n})$ is given by
\beq
S(E_i)=-k_i^{-1}E_i, \ \ S(F_i)=-F_ik_i, \ \ S(q^h)=q^{-h},\ \ S(T^{\pm})=(T^{\pm})^{-1},
\eeq
and the counit $\varepsilon: U_q(\mathfrak{gl}_{m|n})\rightarrow \CC(q)$ by
\beq
\varepsilon(E_i)=\varepsilon(F_i)=0, \ \ \varepsilon(q^h)=1,\ \ \varepsilon(T^{\pm})=I. \\
\eeq

There is a fundamental representation of $U_q(\mathfrak{gl}_{m|n})$ on $\CC^{m|n}$:
\beq
\bal
\rho: U_q(\mathfrak{gl}_{m|n})&\rightarrow \End(\CC^{m|n})\\
T^{\pm}&\mapsto R^{\pm},
\eal
\eeq
and $\rho(E_i)=e_{i,i+1},\ \rho(F_i)=e_{i+1,i},\ \rho(q^h)=\sum_{i=1}^{m+n}q^{\langle h,\epsilon_i  \rangle}e_{ii}$. We can define the action $\rho_r \ (r >1)$ of $U_q(\mathfrak{gl}_{m|n})$  on tensor space $(\CC^{m|n})^{\ot r} $ by its Hopf superalgebra structure.

The {\it  quantum coordinate superalgebra} $A_{q}(\Mat_{m|n})$ is the finite dual of quantum superalgebra $U_q(\mathfrak{gl}_{m|n})$,  generated by  $x_{i j}, 1 \leq i, j \leq m+n$  subject to the quadratic relations defined by the matrix equation
\begin{equation}\label{RT}
  R X_{1} X_{2}=X_{2} X_{1} R
\end{equation}
in  $A_{q}(\Mat_{m|n}) \otimes \mathrm{End}(\CC^{m|n})^{\ot 2}$ and the parity of $x_{ij}$ is $\bar i+\bar j$.
In terms of entries, the relations can be written as
\begin{align}\label{e:reln}
&x_{ik}^2=0,\ \bar i+\bar k=1,  \\
&x_{i l} x_{i k}=(-1)^{(\bar i+\bar l)(\bar i+\bar k)}q_ix_{i k} x_{i l}, \\
&x_{j k} x_{i k}=(-1)^{(\bar i+\bar k)(\bar j+\bar k)}q_k x_{i k} x_{j k},\\
&x_{j k} x_{i l}=(-1)^{(\bar j+\bar k)(\bar i+\bar l)}x_{i l} x_{j k},\\
&x_{j l} x_{i k}=(-1)^{(\bar j+\bar l)(\bar i+\bar k)}x_{i k} x_{j l}+(q-q^{-1})(-1)^{\bar j(\bar i+\bar k)+\bar i \bar k} x_{i l} x_{j k},
\end{align}
where  $i<j$  and  $k<l$.
The superalgebra  $A_q(\Mat_{m|n})$ is a super bialgebra under the comultiplication
 $A_q(\Mat_{m|n})\longrightarrow A_q(\Mat_{m|n})\ot A_q(\Mat_{m|n})$
 defined by
\begin{equation}
\Delta(X)=X\otimes X,
\end{equation}
and the counit given by $\varepsilon(X)=I$.

There exists a dual pairing $(\ ,\ )$ of super bialgebra $A_{q}(\Mat_{m|n})$ and $U_q(\mathfrak{gl}_{m|n})$:
\beq
(X_1,T_2^{\pm})=R^{\pm}.
\eeq
This duality equips $(\CC^{m|n})^{\ot r}$  a $A_{q}(\Mat_{m|n})$-comodule structure  by
\beq
\bal
 \Pi_r: (\CC^{m|n})^{\ot r} &\rightarrow  (\CC^{m|n})^{\ot r}\ot A_{q}(\Mat_{m|n})\\
   e_{j_1}\ot \cdots \ot e_{j_r}&\mapsto \sum_{(i_1,\ldots,i_r)}(-1)^{\sum\limits_{a<b} \bar  i_b(\bar i_a+\bar j_a)} e_{i_1}\ot \cdots \ot e_{i_r}\ot x_{i_1,j_1}\cdots x_{i_r,j_r},\\
\eal
\eeq
where the sum is over all sequence $(i_1\ldots,i_r)$ of $[m+n]$.

\section{Hecke algebra $\mathcal H_r$ and the Yang-Baxter equation}
The  Hecke algebra  $\Hc_r$ is the associative algebra generated by elements
$T_1,\ldots,T_{r-1}$ subject to the relations
\begin{align}\label{e:Hecke}
&(T_i-q)(T_i+q^{-1})=0,\\
&T_{i}T_{i+1}T_{i}=T_{i+1}T_{i}T_{i+1},\\
&T_iT_j=T_jT_i \quad\text{for\ \  $|i-j|>1$}.
\end{align}

For each $1\leq i\leq r-1$, let $\si_{i}=(i, i+1)$ be the adjacent transposition in the symmetric group $\Sym_{r}$.
Choose a reduced decomposition
$\si=\si_{i_{1}} \dots \si_{i_{l}}$ of any element
$\si \in \Sym_{r}$ and  $T_{\si}=T_{i_{1}} \dots T_{i_{l}}$ is a reduced expression that is independent of the choice of reduced expressions of $\sigma$.

Define $\check{R}=PR$. Explicitly
\begin{equation}\label{PR}
  \check{R}=\sum_{i=1}^{m+n}(-1)^{\bar i} q_i e_{ii}\ot e_{ii}+\sum_{ i\neq j} (-1)^{\bar j} e_{ij}\ot e_{ji}+(q-q^{-1})\sum_{ i>j} e_{ii}\ot e_{jj}.
\end{equation}
The matrix $\check{R}$ satisfies the following relations:
\ben
\bal
&\check{R}_{k}\check{R}_{k+1}\check{R}_{k}=\check{R}_{k+1}\check{R}_{k}\check{R}_{k+1},\\
&(\check{R}_{k}-q)(\check{R}_{k}+q^{-1})=0,
\eal
\een
where $\check{R}_{k}=P_{k,k+1}R_{k,k+1}, 1 \leq k \leq r-1$.
Then there is a natural representation of the Hecke algebra $\Hc_{r}$ on the
tensor space  $(\CC^{m|n})^{\ot r}$ defined by
\beq
T_{k}\mapsto \check{R}_{k},\qquad k=1,\dots,r-1.
\eeq
And the RTT relation \eqref{RT} of $A_{q}(\Mat_{m|n})$ can be rewritten equivalently as
\beq
\check{R}X_1X_2=X_1X_2 \check{R}.\\
\eeq

For generic $q$, the irreducible representations of $\mathcal{H}_r$ over $\mathbb{C}(q)$ are parameterized by partitions of $r$. Given a partition $\lambda \vdash r$, we denote $(\xi^{\lambda},V^{\lambda})$ the corresponding irreducible representation.
The module $V^{\lambda}$ can be realized by standard Young tableaux  of shape $\lambda$.

Let $\{v_{\mathcal T}\}$ be the normalized orthogonal Young basis of $V^{\lambda}$ indexed by standard Young tableaux $\Tc$ of shape $\lambda$, endowed the inner product $\langle \ , \ \rangle$ satisfying the property
that $\langle T_{\sigma}u, v\rangle=\langle u, T_{\sigma^{-1}}v\rangle$.
Let $c_k(\Tc)=j-i$ denote the content of the box $(i,j)$ of $\lambda$ occupied by $k$ in $\Tc$.
Denote by $\mathrm{SYT}(\lambda)$ the set of all standard Young tableaux of shape $\lambda$.

For any integer $n\geq 0$, set
\[
[n]_q=\frac{q^n-q^{-n}}{q-q^{-1}},\qquad  [n]_q!=[1]_q\cdots[n]_q.
\]
Then the action of $\mathcal H_r$ is given by
\begin{align}\label{Young-orthogonal-form}
T_iv_{\Tc}=\frac{q^{d_i(\Tc)}}{[d_i(\Tc)]_q}v_{\Tc}+
\frac{\sqrt{[d_i(\Tc)+1]_q[d_i(\Tc)-1]_q}}{[d_i(\Tc)]_q}v_{s_i\Tc}
\end{align}
where $d_i(\Tc)=c_{i+1}(\Tc)-c_i(\Tc)$.

The {\it character} of the representation $V^{\lambda}$ is defined as
\begin{equation}\label{ch formula}
\chi_q^{\lambda}
=\sum_{\si \in \mathfrak{S}_r}T_{\si}\mathcal{E}_{\Tc}^{\lambda}T_{\si^{-1}},
\end{equation}
where $\mathcal{E}_{\Tc}^{\lambda}$ is the primitive idempotent of $\mathcal{H}_r$ associated to standard Young tableau $\Tc$.

The primitive idempotent $\mathcal{E}_{\Tc}^{\lambda}$  of $\mathcal{H}_r$ can be computed by
\begin{equation}\label{e:idem}
\mathcal{E}_{\Tc}^{\lambda}=\frac{1}{c_{\lambda}}\sum_{\si \in \Sym_r}\langle T_{\si^{-1}}v_{\Tc},v_{\Tc}\rangle T_{\si},
\end{equation}
where $c_{\lambda}$ is   the Schur element and given  by Steinberg's formula
 \beq
 c_{\lambda}=\prod_{\alpha\in \lambda}q^{c(\alpha)}[h(\alpha)]_q.
 \eeq
 We will briefly write $\mathcal{E}_{\Tc}=\mathcal{E}_{\Tc}^{\lambda}$ if the shape of $\Tc$ is clear from the context.
We have the following useful lemma from the Lemma 1.1.1 in \cite{Mo}.
\begin{lemma}\label{idem-lemma}
Let $\Tc$ be a standard tableau of shape $\lambda$ and $1 \leq a \leq r-1$, then
\beq
\bal
\mathcal{E}_{\Tc}^{\lambda}(T_a-\frac{q^{d_a(\Tc)}}{[d_a(\Tc)]_q})&=\mathcal{E}_{\Tc}^{\lambda}T_a \mathcal{E}_{(a,a+1)\Tc}^{\lambda},
\eal
\eeq
where we suppose that $\mathcal{E}_{(a,a+1)\Tc}^{\lambda}=0$ if the tableau $(a,a+1)\Tc$ is not standard.
\end{lemma}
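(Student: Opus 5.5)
The plan is to realize everything inside the semisimple algebra $\Hc_r$ over $\CC(q)$ through the matrix units attached to the orthonormal Young basis. I will use the Wedderburn decomposition $\Hc_r\cong\bigoplus_{\mu\vdash r}\End(V^{\mu})$ and identify $\mathcal{E}^{\lambda}_{\Tc}$ with the diagonal matrix unit $E^{\lambda}_{\Tc\Tc}$. This matrix unit acts on $V^{\lambda}$ as the orthogonal projection onto $\CC(q)v_{\Tc}$ and kills $V^{\mu}$ for $\mu\neq\lambda$.

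The first step is to justify this identification from \eqref{e:idem}. By the Schur orthogonality relations for $\Hc_r$, whose Schur element is $c_\lambda$, the element
\[\frac{1}{c_\lambda}\sum_{\si}\langle T_{\si^{-1}}v_{\Tc},v_{\Tc'}\rangle T_{\si}\]
acts as the matrix unit sending $v_{\Tc'}\mapsto v_{\Tc}$. I take this as the standard content behind \eqref{e:idem} and behind Lemma 1.1.1 of \cite{Mo}. The same relations give the facts I need: the $\mathcal{E}_{\Tc}$ are pairwise orthogonal idempotents, $\mathcal{E}_{\Tc}$ and $\mathcal{E}_{\Tc'}$ have product zero when $\Tc\neq\Tc'$, and $\sum_{\Tc}\mathcal{E}_{\Tc}=1$ when the sum runs over all standard tableaux of all shapes. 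By faithfulness, it then suffices to check the claimed identity by acting on each basis vector $v_{\Tc'}$ of each $V^{\mu}$.

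The second step is the actual comparison, most cleanly done in matrix-unit form. From \eqref{Young-orthogonal-form}, $T_a$ on $V^{\lambda}$ equals
\[\sum_{\Tc'}\frac{q^{d_a(\Tc')}}{[d_a(\Tc')]_q}E_{\Tc'\Tc'}+\sum_{\Tc'}\beta(\Tc')E_{s_a\Tc',\Tc'}.\]
Here $\beta$ is the square-root coefficient, and it vanishes exactly when $s_a\Tc'$ is not standard, because then $d_a=\pm1$. I will also need that the action is symmetric, which follows from $\langle T_a u,v\rangle=\langle u,T_av\rangle$; this ensures the coefficient of $E_{\Tc,s_a\Tc}$ is $\beta(\Tc)$. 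Multiplying on the left by $E_{\Tc\Tc}$ keeps only the terms whose row index is $\Tc$, so
\[\mathcal{E}_{\Tc}T_a=\frac{q^{d_a(\Tc)}}{[d_a(\Tc)]_q}E_{\Tc\Tc}+\beta(s_a\Tc)\,E_{\Tc,s_a\Tc}.\]
Subtracting the diagonal term leaves $\beta E_{\Tc,s_a\Tc}$. This equals $E_{\Tc\Tc}T_aE_{s_a\Tc,s_a\Tc}$, since right multiplication by $E_{s_a\Tc,s_a\Tc}$ kills the diagonal part, because $s_a\Tc\neq\Tc$. If $s_a\Tc$ is not standard, both sides vanish, which matches the convention $\mathcal{E}_{(a,a+1)\Tc}=0$.

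I expect the main obstacle to be the bookkeeping in the identification of \eqref{e:idem} with the matrix unit. This requires the correct normalization by $c_\lambda$ and the adjointness $\langle T_\si u,v\rangle=\langle u,T_{\si^{-1}}v\rangle$. Since it is the cited result of \cite{Mo}, I would invoke it rather than reprove the orthogonality relations.
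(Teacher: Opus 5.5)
Your proof is correct. The paper gives no argument for this lemma; it is quoted from Lemma 1.1.1 of \cite{Mo}. Your matrix-unit computation is therefore a self-contained substitute for that citation, not an alternative to a proof in the text.

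What your route buys is that it isolates exactly the two inputs the lemma rests on. The first is that \eqref{e:idem}, normalized by the Schur element $c_\lambda$ of the trace $T_\si\mapsto\delta_{\si,1}$ (for which $\{T_{\si^{-1}}\}$ is the dual basis of $\{T_\si\}$), is the diagonal matrix unit $E_{\Tc\Tc}$ of the orthonormal Young basis. The second is that, by \eqref{Young-orthogonal-form}, row $\Tc$ of the matrix of $T_a$ has diagonal entry $q^{d_a(\Tc)}/[d_a(\Tc)]_q$ and at most one further nonzero entry, in column $s_a\Tc$. Granting these, the lemma is just $E_{\Tc\Tc}T_a=\sum_{\Tc'}E_{\Tc\Tc}T_aE_{\Tc'\Tc'}$, with only $\Tc'=\Tc$ and $\Tc'=s_a\Tc$ surviving.

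One remark: the symmetry of the action of $T_a$ that you invoke is not needed. Only row $\Tc$ enters, and the right-hand side $\mathcal{E}_{\Tc}T_a\mathcal{E}_{(a,a+1)\Tc}$ absorbs whatever the entry in column $s_a\Tc$ happens to be. This is fortunate, because the off-diagonal coefficient in \eqref{Young-orthogonal-form} as printed changes sign under $d\mapsto -d$, since $[-d]_q=-[d]_q$. That contradicts $(T_a-q)(T_a+q^{-1})=0$ on the two-dimensional block spanned by $v_{\Tc}$ and $v_{s_a\Tc}$. The coefficient must be read in a sign-symmetric way, e.g.\ as $\sqrt{1-[d]_q^{-2}}$, for the adjointness $\langle T_\si u,v\rangle=\langle u,T_{\si^{-1}}v\rangle$ to hold. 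Your argument is insensitive to this.
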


The Jucys-Murphy elements $y_1,\ldots,y_r$ of $\Hc_r$ are defined by $y_1=1$ and
\begin{align}
  y_k=1+(q-q^{-1})(T_{(1,k)}+T_{(2,k)}+\cdots+T_{(k-1,k)}), \quad k=2,\ldots,r.
\end{align}
These elements generate the maximal commutative subalgebra of $\Hc_r$ and
\begin{align}\label{JM_rela}
  y_k\mathcal{E}_{\Tc}^{\lambda}=\mathcal{E}_{\Tc}^{\lambda}y_k=q^{2c_k(\Tc)}\mathcal{E}_{\Tc}^{\lambda}.
\end{align}

Next we recall the fusion procedure of primitive idempotents in \cite{DJ2,Ch, IMO,N}.   Denote by $\Tc^-$ the standard tableau obtained from $\Tc$ by removing the box $\alpha$ occupied by $r$. The shape of $\Tc^-$ is denoted by $\mu$. We have the recurrence relation:
\ben
\mathcal{E}_{\Tc}^{\lambda}=\mathcal{E}_{\Tc^-}^{\mu}\frac{(y_r-q^{2a_1})\cdots (y_r-q^{2a_l})}{(q^{2c_{\alpha}}-q^{2a_1})\cdots (q^{2c_{\alpha}}-q^{2a_l})},
\een
where $a_1,\ldots,a_l$ are the contents of all addable boxes of $\mu$ except for $\alpha$, while $c_{\alpha}$ is the content of the latter.

On the other hand, take $r$ complex variables $z_1,\ldots,z_r$ and consider the $\Hc_r$-valued rational function defined by
\ben
\phi_{\Tc}(z_1,\ldots,z_r)=\prod^{\rightarrow}_{(i,j)} T_{j-i}(q^{2c_i(\Tc)}z_i,q^{2c_j(\Tc)}z_j),
\een
where the product is taken in the lexicographical order on the set of pairs $(i,j)$ with $1 \leq i <j \leq r$. And for any $k=1,\ldots,r-1$,  the rational functions $T_k(x,y)$ in two variables $x,y$ be defined by
\ben
T_k(x,y)=T_k+\frac{q-q^{-1}}{x^{-1}y-1}.
\een
Then the primitive idempotent $\Ec_{\Tc}^{\lambda}$ can be obtained by the consecutive evaluations
\beq\label{fusion-procedure}
\Ec_{\Tc}^{\lambda}=\frac{1}{c_{\lambda^T}}\phi_{\Tc}(z_1,\ldots,z_r)T_0^{-1}|_{z_1=1}\cdots |_{z_r=1},
\eeq
where we denote $T_0$ the unique longest element in $\Hc_r$ and $T_0$ satisfies the relations:
\beq\label{T0-relation}
T_0T_j=T_{r-j}T_0, \ \ 1 \leq j\leq r-1.
\eeq

For some $\mu\vdash r$, let $\mathfrak S_{\mu}$ be a Young subgroup of the symmetric group $\mathfrak{S}_r$ of type $\mu$. Denote by $K$ the set of minimal representatives of cosets in $\mathfrak{S}_r/\mathfrak S_{\mu}$. Let $\mathcal{H}_{\mu}$ be the corresponding  parabolic subalgebra of $\mathcal{H}_r$, and $V$ is a representation of $\mathcal{H}_{\mu}$.
The induced representation of $V$ is defined by
$$\mathrm{Ind}_{\mathcal{H}_{\mu}}^{\mathcal{H}_r}(V)=\mathcal{H}_r\ot_{\mathcal{H}_{\mu}} V.$$
We briefly write $\mathrm{Ind}(V)=\mathrm{Ind}_{\mathcal{H}_{\mu}}^{\mathcal{H}_r}(V)$.

\section{Quantum super immanants and Schur-Weyl duality}
In this section, we will introduce the notion of quantum super immanants in the quantum coordinate superalgebra $A_q(\Mat_{m|n})$ with any representation of the
Hecke algebra $\Hc_r$ and give a representation-theoretic interpretation for quantum super immanants.
\subsection{Quantum super immanants}
Let  $I=(i_1,\ldots, i_r)$ and $J=(j_1, \ldots ,j_r)$ be two multisets ($r$-tuples) of $[m+n]=\{1,\ldots,m+n\}$. Let $X=(x_{ij})$ be the generator matrix of $A_q(\Mat_{m|n})$.  We can write supermatrix $X$ in the block form:
\[
X = \begin{pmatrix}
X_{11} & X_{12}\\
X_{21} & X_{22}
\end{pmatrix},
\]
where $X_{11},X_{22}$ (resp. $X_{12},X_{21}$) are respectively, $m \times m$,  $n \times n$ (resp.  $m \times n$,  $n \times m$) dimensional matrices  with  even (resp. odd) entries.
The supertrace $str$ of  $X$ is
\[
str(X)=\sum_{i=1}^{m+n}(-1)^{\bar{i}}x_{ii}.
\]
For any  $a\in\{1,2,\ldots,r\}$ we will denote by $str_a$ the corresponding supertrace on the
superalgebra $\mathrm{End}((\mathbb{C}^{m|n})^{\ot r})$ which acts as $str$ on the $a$-th copy of $\mathrm{End}(\mathbb{C}^{m|n})$ and as the identity map on all
the other tensor factors.

Denote  $X^I_J$ the generalized submatrix of $X$ whose row (resp. column) indices belong to  $I$ (resp. $J$). If $I=J$, we briefly write $X_I$.
It will be convenient to use a standard notation for the matrix coefficients $A_{j_1,\dots,j_r}^{i_1,\dots,i_r}$ of an operator $A$ acting on the standard basis of  $\mathrm{End}((\mathbb{C}^{m|n})^{\ot r})$.
We denote
\begin{equation}
  A=\sum_{I,J}A_{j_1,\dots,j_r}^{i_1,\dots,i_r} e_{i_1j_1}\ot \cdots \ot e_{i_rj_r},
\end{equation}
where $e_{ij}$ be the standard basis of $\mathrm{End}(\mathbb{C}^{m|n})$.
Let $\{e_{i}^*|1\leq i\leq m+n\}$ be the basis of $\mathbb (\mathbb C^{m|n})^*$ dual to the basis $\{e_{i}|1\leq i\leq m+n\}$ of $\mathbb C^{m+n}$, then we can write the dual bases of $((\mathbb C^{m|n})^{\ot r})^{*}$ and $(\mathbb C^{m|n})^{\ot r}$ respectively:
\beq
\langle i_1,\dots ,i_r\mid=(e_{i_1}\otimes\cdots \otimes  e_{i_r})^*, \qquad \mid i_1,\dots,i_r\rangle =e_{i_1}\otimes\cdots \otimes e_{i_r},
\eeq
with parity $\bar I=\bar i_1+\ldots +\bar i_r$.

Then the  coefficients of the operator $A$ are given by
\beq
A_{j_1,\ldots,j_r}^{i_1,\ldots,i_r}=(-1)^{\gamma(I,J)}\langle i_1,\ldots ,i_r\mid A \mid j_1,\ldots,j_r\rangle,
\eeq
where
$\gamma(I,J)=\sum_{a} \bar i_a(\bar j_a+1)+\sum_{a<b}  \bar j_b(\bar i_a +\bar j_a)$. In particular,
\beq
\bal
A_{i_1,\ldots,i_r}^{i_1,\ldots,i_r}&=\langle i_1,\ldots ,i_r\mid A \mid i_1,\ldots,i_r\rangle,\\
str_{1,\ldots,r}(A)&=\sum_{I}(-1)^{\bar I}\langle i_1,\ldots ,i_r\mid A \mid i_1,\ldots,i_r\rangle,
\eal
\eeq

Let $V$ be any representation of $\Hc_r$ with the character $\chi^{V}$.
The {\it quantum super immanant} of $X^I_J$ associated to the representation $V$ is defined by
\begin{equation}\label{imm-def}
  \Imm_{\chi ^{V}}(X^I_J)=(-1)^{\sum_{k=1}^r\bar i_k\bar j_k}\langle i_{1},\ldots ,i_{r}\mid \chi^{V}X_1\cdots X_{r} \mid j_{1},\ldots,j_{r}\rangle.
\end{equation}
With equation \eqref{ch formula}, the quantum super immanants corresponding to representation $V^{\lambda}$ can be written as
\begin{equation}
  \Imm_{\chi_{q}^{\lambda}}(X^I_J)=(-1)^{\sum_{k=1}^r\bar i_k\bar j_k}\sum_{\si\in \Sym_r}\langle i_{1},\ldots ,i_{r} \mid \check{R}_{\si} \mathcal{E}_{\Tc}^{\lambda}\check{R}_{\si^{-1}} X_1\cdots X_{r} \mid j_{1},\ldots,j_{r}\rangle.
\end{equation}

It follows from equation \eqref{PR}  that $\check{R}(v_i\ot v_j)=(-1)^{\bar i\bar j}v_j\ot v_i$ for $i<j$. Therefore, for any $\si \in \mathfrak{S}_r$ and subset $I=(1\leq i_1 < \ldots < i_r\leq m+n)$, we have
\begin{equation}
\begin{aligned}
  \check{R}_{\si^{-1}} \mid i_1,\dots,i_r\rangle=(-1)^{\bar I_{\si^{-1}}}\mid i_{\si_1},\dots i_{\si_r}\rangle, \\
\langle i_1,\dots,i_r \mid \check{R}_{\si}=(-1)^{\bar I_{\si^{-1}}}\langle i_{\si_1},\dots i_{\si_r}\mid,
\end{aligned}
\end{equation}
where
\ben
\bar I_{\si}=\sum_{k<t, \atop \si(k)> \si(t)}\bar i_k \bar i_t.
\een

It is convenient to write $I=(1^{\alpha_1},\ldots,(m+n)^{\alpha_{m+n}})$ to specify the multiplicity $\alpha_i$ of $i$ in the non-decreasing multiset $I=(i_1\leq \ldots \leq i_r)$. Here $\alpha_i=\alpha_i(I)= \mathrm{Card}\{j\in I \mid j=i\}$. Let  $$\alpha({I})=\alpha_1!\alpha_2 ! \cdots \alpha_{m+n} !$$
For any integer $z\geq 0$, set $q'_i=(-1)^{\bar i}q_i$. Denote $\mathrm{q}=(q_1,\ldots,q_{m+n})$ and $\mathrm q'=(q'_1,\ldots,q'_{m+n})$, then $(\mathrm q')^2=\mathrm q^2=(q_1^2,\ldots,q_{m+n}^2)$.
The $q$-numbers and $q$-factorials are defined as
\[ (z)_{q_i}=\frac{(q_i)^z-1}{q_i-1}, \qquad (z)_{q_i}!=(1)_{q_i}\cdots(z)_{q_i}.
\]
Let $$\alpha_{\mathrm q}({I})=(\alpha_1)_{ q_1}!(\alpha_2)_{q_2} ! \cdots (\alpha_{m+n})_{q_{m+n}} !$$
Denote by $$\mathfrak S_{I}=\mathfrak S_{\alpha_1}\times\mathfrak S_{\alpha_2} \times \cdots \times \mathfrak S_{\alpha_{m+n}}$$  the Young subgroup  of $\mathfrak{S}_r$, and $\mathfrak S_{\alpha_j}=1$ if $\alpha_j=0$.
Denote by $H(m,n)$ the set of partitions $\lambda$ such that $\lambda_{m+1}\leq n$ and let
\ben
H(m,n;r)=\{\lambda\in H(m,n)\mid \lambda \vdash r\}.
\een
\begin{proposition}\label{imm-char}
  Let $I=(i_1\leq \ldots \leq i_r)$ be an ordered  multiset of $[m+n]$ and $X$ be the generator supermatrix of $A_q(\Mat_{m|n})$. Then for any $\lambda\vdash r$,
  \begin{equation}\label{imm-XI}
    \frac{\Imm_{\chi_q^{\lambda}}(X_I)}{\alpha_{\mathrm q^2}(I) }=\frac{(-1)^{\bar I}}{\alpha( {I})}\sum_{\si\in \Sym_r}\langle i_{\sigma_1},\dots ,i_{\sigma_r}\mid  \mathcal{E}_{\Tc}^{\lambda} X_1\cdots X_{r} \mid i_{\sigma_1},\dots ,i_{\sigma_r}\rangle,
  \end{equation}
 Moreover when $\lambda \notin H(m,n;r)$, $\Imm_{\chi_q^{\lambda}}(X_I)=0$.
\end{proposition}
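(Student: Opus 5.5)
Starting from the expansion of $\Imm_{\chi_q^\lambda}(X_I)$ after \eqref{ch formula},
\[
\Imm_{\chi_q^{\lambda}}(X_I)=(-1)^{\bar I}\sum_{\si\in \Sym_r}\langle i_{1},\ldots ,i_{r} \mid \check{R}_{\si} \mathcal{E}_{\Tc}^{\lambda}\check{R}_{\si^{-1}} X_1\cdots X_{r} \mid i_{1},\ldots,i_{r}\rangle ,
\]
where the sign uses $\sum_k \bar i_k\bar i_k=\bar I$. The first step is to move $\check R_{\si^{-1}}$ to the left of $X_1\cdots X_r$. The relation $\check R_k X_1\cdots X_r=X_1\cdots X_r\check R_k$ follows from the RTT relation in the form $\check{R}X_1X_2=X_1X_2\check{R}$. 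This gives
\[
\langle I|\check R_\si\,\Ec_\Tc\, X_1\cdots X_r\,\check R_{\si^{-1}}|I\rangle .
\]
It then remains to compute $\check R_{\si^{-1}}|I\rangle$ and $\langle I|\check R_\si$ when $I$ is a non-decreasing multiset rather than a strictly increasing set.

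The second step is this computation. Write $\si=\tau w$ with $\tau\in K$ a minimal coset representative of $\Sym_r/\Sym_I$ and $w\in\Sym_I$. For $\tau\in K$, the reduced word of $\tau^{-1}$ only swaps positions carrying distinct indices $i<j$. Using $\check R(v_i\ot v_j)=(-1)^{\bar i\bar j}v_j\ot v_i$, we get $\check R_{\tau^{-1}}|I\rangle=(-1)^{\bar I_{\tau^{-1}}}|I_\tau\rangle$, and dually for $\langle I|\check R_\tau$; the two signs cancel. On the factor $\Sym_I$ each block carries a repeated index $i$, where $\check R_k$ acts on $v_i\ot v_i$ by the scalar $(-1)^{\bar i}q_i=q_i'$. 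Hence $T_w$ acts by $(q_i')^{\ell(w)}$ on both sides. Summing over $w\in\Sym_I$ gives the factor
\[
\prod_i\sum_{w\in\Sym_{\alpha_i}}(q_i')^{2\ell(w)}=\prod_i(\alpha_i)_{q_i^2}!=\alpha_{\mathrm q^2}(I),
\]
since $(q_i')^2=q_i^2$ and $\sum_{w}t^{\ell(w)}=(\alpha)_t!$. This yields
\[
\Imm_{\chi_q^\lambda}(X_I)=(-1)^{\bar I}\alpha_{\mathrm q^2}(I)\sum_{\tau\in K}\langle I_\tau|\Ec_\Tc X_1\cdots X_r|I_\tau\rangle .
\]
Finally, summing over all $\si\in\Sym_r$ instead of over $K$ counts each distinct rearrangement $I_\si$ exactly $\alpha(I)$ times, which produces \eqref{imm-XI}.

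The main obstacle is the bookkeeping of super signs when $\check R_{\si^{-1}}$ passes through repeated odd indices. There $\check R$ acts on $v_i\ot v_i$ by $-q^{-1}$, and $\ell(w)$ and $\ell(w^{-1})$ must contribute compatibly; the factors multiply to $(q_i')^{2\ell(w)}$, so no sign survives. The sign of the pairing $\langle\cdot|$ for odd $I$ also has to be tracked, and this should be checked carefully against the conventions for $\gamma(I,J)$.

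For vanishing when $\lambda\notin H(m,n;r)$, we use that $\Ec_\Tc^\lambda$ acts as zero on $(\CC^{m|n})^{\ot r}$ for such $\lambda$. This is the super Schur–Weyl duality of Moon and Mitsuhashi: only $\lambda$ in the $(m,n)$-hook appear in the decomposition, and for other $\lambda$ the isotypic component is zero. Alternatively, one can argue directly via the fusion procedure \eqref{fusion-procedure}: a column of length $>m$ together with a row of length $>n$ forces a $q$-antisymmetrizer on $m+1$ even indices or a $q$-symmetrizer on $n+1$ odd indices, and both act as zero. Then every matrix coefficient in \eqref{imm-XI} vanishes.
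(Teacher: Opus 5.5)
Your derivation of \eqref{imm-XI} follows the paper's argument. You commute $\check R_{\si^{-1}}$ past $X_1\cdots X_r$ (to the right, not the left as you wrote; your display is correct). You then split $\si$ into a Young-subgroup part, which acts by the scalar $(q_i')^{\ell}$ on $|I\rangle$ and $\langle I|$, and a minimal coset representative, which acts by a signed permutation whose signs cancel. Summing the Poincar\'e polynomials gives $\alpha_{\mathrm q^2}(I)$, and the last step is the $\alpha(I)$-fold overcount.

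One correction: the Young-subgroup factor must sit on the left. Write $\si=w\tau$ with $w\in\Sym_I$ and $\tau^{-1}$ of minimal length in $\tau^{-1}\Sym_I$. Then $\check R_{\si^{-1}}=\check R_{\tau^{-1}}\check R_{w^{-1}}$, so $\check R_{w^{-1}}$ hits $|I\rangle$ first. The paper uses this order too ($\check R_\tau\check R_\mu$ with $\tau\in\Sym_I$). With $\si=\tau w$, $\tau\in K$, as you wrote, $\check R_{w^{-1}}$ acts on $|I_\tau\rangle$ rather than on $|I\rangle$. Moreover $\check R_{\tau^{-1}}|I\rangle$ need not be a signed basis vector: for $m\geq 1$, $I=(1,1,2)$, $\tau=\si_1\si_2\in K$, one gets $\check R_2\check R_1|1,1,2\rangle=q\,|1,2,1\rangle$. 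The totals still agree after summing over $\Sym_r$, but the term-by-term bookkeeping you describe is only valid in the corrected order.

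For the vanishing statement you take a genuinely different route.
\begin{itemize}
\item \textbf{The paper} fixes the row-reading tableau $\Tc_0$. It argues, via the fusion procedure and Lemma \ref{idem-lemma} and by analogy with the supercommutative case, that $\Ec^\lambda_{\Tc_0}$ annihilates every rearrangement of $e_{i_1}\ot\cdots\ot e_{i_r}$. Every matrix coefficient in \eqref{imm-XI} then vanishes, by weight considerations.
\item \textbf{You} use the decomposition \eqref{Schur-Weyl-decom} and semisimplicity of $\Hc_r$. Here $\Ec^\lambda_\Tc$ lies in the Wedderburn block of $V^\lambda$, which does not occur in $(\CC^{m|n})^{\ot r}$ when $\lambda\notin H(m,n;r)$. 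So $\Ec^\lambda_\Tc$, and hence $\chi_q^\lambda$, is zero on the whole tensor space.
\end{itemize}

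Your route is shorter and gives a stronger, $\Tc$-independent vanishing. The cost is importing the full Moon--Mitsuhashi theorem, and you should confirm its labelling matches $T_k\mapsto\check R_k$. It does: for $m\geq 1$, $\check R$ acts by $q$ on $e_1\ot e_1$, which matches $T_1v=qv$ for $\lambda=(2)$ in \eqref{Young-orthogonal-form}. The paper's route is intrinsic and in effect proves the needed half of Schur--Weyl duality directly, though it is only sketched there.

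Your ``alternative'' fusion sketch is not an argument as stated. The relevant condition is $\lambda\supseteq((n+1)^{m+1})$, and you would need a pigeonhole argument on that rectangle that uses column $q$-antisymmetrizers and row $q$-symmetrizers simultaneously. You do not need it, since the Schur--Weyl argument already suffices.
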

\begin{proof}
We denote $\mathcal{E}_{\Tc}^{\lambda}$ by $\mathcal{E}_{\Tc}$.
By definition,
\begin{equation}
\bal
  \Imm_{\chi_q^{\lambda}}(X_I)&=(-1)^{\bar I}\sum_{\sigma\in \mathfrak{S}_r}\langle i_{1},\ldots ,i_{r}\mid \check{R}_{\si} \mathcal{E}_{\Tc}\check{R}_{\si^{-1}}X_1\cdots X_{r} \mid i_{1},\ldots,i_{r}\rangle\\
  &=(-1)^{\bar I}\sum_{\sigma\in \mathfrak{S}_r}\langle i_{1},\ldots ,i_{r}\mid \check{R}_{\si} \mathcal{E}_{\Tc}X_1\cdots X_{r} \check{R}_{\si^{-1}}\mid i_{1},\ldots,i_{r}\rangle.
\eal
\end{equation}

Let $\mathcal M(\Sym_r/\Sym_I)$ be the minimal length coset representative of $\Sym_r/\Sym_I$.
For any  $\tau \in\Sym_{\alpha_i}$,
\begin{align*}
\check{R}_{\tau}\mid i_1,\dots ,i_r\rangle=(q'_i)^{l(\tau)}\mid i_1,\dots ,i_r\rangle,\\
 \langle i_1,\dots ,i_r \mid \check{R}_{\tau^{-1}}= (q'_i)^{l(\tau)} \langle i_1,\dots ,i_r \mid.
\end{align*}
For any  $\mu \in \mathcal M(\Sym_r/\Sym_I)$,
\begin{align*}
  &\check{R}_{\mu^{-1}}  \mid i_1,\dots ,i_r\rangle= (-1)^{\bar I_{\mu^{-1}}}\mid i_{\mu_1},\dots ,i_{\mu_r}\rangle,\\
  & \langle i_1,\dots ,i_r\mid \check{R}_{\mu}=(-1)^{\bar I_{\mu^{-1}}} \mid i_{\mu_1},\dots ,i_{\mu_r}\rangle.
\end{align*}
Therefore,
\begin{equation*}
\bal
  \Imm_{\chi_q^{\lambda}}(X_I)&=(-1)^{\bar I}\sum_{ \tau \in \Sym_{I}, \atop \mu \in \mathcal M(\Sym_r/\Sym_I) } \langle i_1,\dots ,i_r\mid  \check{R}_{\tau}\check{R}_{\mu} \mathcal{E}_{\Tc}X_1\cdots X_{r} \check{R}_{\mu^{-1}} \check{R}_{\tau^{-1}}\mid i_1,\dots ,i_r\rangle\\
  &=\frac{(-1)^{\bar I}\alpha_{\mathrm{q}^2}(I) }{\alpha( {I})}\sum_{\si\in \Sym_r}\langle i_{\sigma_1},\dots ,i_{\sigma_r}\mid  \mathcal{E}_{\Tc}X_1\cdots X_{r} \mid i_{\sigma_1},\dots ,i_{\sigma_r}\rangle.
\eal
\end{equation*}

If $\lambda \notin H(m,n;r)$, so $\lambda_{m+1}>n$. Take the standard Young tableau $\Tc_0$ obtained by numbering the boxes by rows downwards, from left to right in every row.  Similar to the  supercommutative case in \cite{JLZ3}, it follows from the fusion procedure of primitive idempotents and lemma \ref{idem-lemma} that for any $\tau\in \Sym_r$
\beq
\mathcal{E}_{\Tc_0}^{\lambda}\cdot e_{i_{\tau(1)}}\ot \cdots \ot e_{i_{\tau(r)}}=0.
\eeq
Hence,
\beq
\bal
 \frac{\alpha({I})}{\alpha_{\mathrm{q}^2}(I) }\Imm_{\chi_q^{\lambda}}(X_I)&=(-1)^{\bar I}\sum_{\si\in \Sym_r}\langle i_{\sigma_1},\dots ,i_{\sigma_r}\mid  \mathcal{E}_{\Tc_0}^{\lambda} X_1\cdots X_{r} \mid i_{\sigma_1},\dots ,i_{\sigma_r}\rangle\\
&=(-1)^{\bar I}\sum_{\si\in \Sym_r}(\mathcal{E}_{\Tc_0}^{\lambda} X_1\cdots X_{r})_{i_{\si_1},\ldots, i_{\si_r}}^{i_{\si_1},\ldots, i_{\si_r}}\\
&=(-1)^{\bar I}\sum_{\si,\tau\in \Sym_r} (\mathcal{E}_{\Tc_0}^{\lambda})_{i_{\tau_1},\ldots, i_{\tau_r}}^{i_{\si_1},\ldots, i_{\si_r}} x_{i_{\tau_1},i_{\si_1}} \cdots x_{i_{\tau_r},i_{\si_r}}
\eal
\eeq
which vanishes when $\lambda \notin H(m,n;r)$.

%For case $(i)$, it is same as  the proof  in the non-super case, see \cite{JLZ}. Next  we see the case $(ii)$, by lemma \ref{idem-lemma}
%\beq
%\bal
%&\mathcal{E}_{\Tc_0}^{\lambda}\cdot e_{i_1}\ot \cdots \ot e_{i_r}\\
%=&-q^{-1} \mathcal{E}_{\Tc_0}^{\lambda}\check{R}_{(k,k+1)}\cdot e_{i_1}\ot \cdots \ot e_{i_r}\\
%=&-\mathcal{E}_{\Tc_0}^{\lambda}\cdot e_{i_1}\ot \cdots \ot e_{i_r}.
%\eal
%\eeq
%Combining case $(i)$ and $(ii)$, $\mathcal{E}_{\Tc_0}^{\lambda}\cdot e_{i_1}\ot \cdots \ot e_{i_r}=0$.
%
%Moreover,
%by the same argument on the above proof, we can prove that for any $\Tc\in \mathrm{SYT}(\lambda)$,
%\beq
%\mathcal{E}_{\Tc}^{\lambda}\cdot e_{i_1}\ot \cdots \ot e_{i_r}=0.
%\eeq
%Note that for any $\tau \in \mathcal M(\Sym_r/\Sym_I)$
%\beq
%\bal
%&\mathcal{E}_{\Tc_0}^{\lambda}\cdot e_{i_{\tau(1)}}\ot \cdots \ot e_{i_{\tau(r)}}\\
%=&(-1)^{\bar I_{\tau^{-1}}}\mathcal{E}_{\Tc_0}^{\lambda}\check{R}_{\tau^{-1}}\cdot e_{i_1}\ot \cdots \ot e_{i_r}.
%\eal
%\eeq
%So by lemma \ref{idem-lemma}, the above  equation can be decompose with each term containing $\mathcal{E}_{\Tc}^{\lambda}\cdot e_{i_1}\ot \cdots \ot e_{i_r}$, for some $\Tc\in \mathrm{SYT}(\lambda)$, so it is zero. Hence we obtain that $\Imm_{\chi_q^{\lambda}}(X_I)=0$, when $\lambda \notin H(m,n;r)$.
\end{proof}

\subsection{Quantum super Schur-Weyl duality}
We review the quantum super version of Schur-Weyl duality introduced by Moon \cite{Moo} and by Mitsuhashi \cite{Mit}.

 The tensor space $(\CC^{m|n})^{\ot r}$ is a multiplicity free $U_q(\gl_{m|n})\times  \Hc_r $-module and there exists a decomposition of the space of tensors
\beq\label{Schur-Weyl-decom}
(\CC^{m|n})^{\ot r}\cong\bigoplus_{\lambda \in H(m,n;r)} U^{\lambda}\ot V^{\lambda},
\eeq
where $(\pi_{\lambda},U^{\lambda})$ are the irreducible covariant tensor representations for $U_q(\gl_{m|n})$ corresponding to  partition $\lambda$ and $(\xi_{\lambda},V^{\lambda})$ are the irreducible representations  of $\Hc_r$.
Hence there exists an integral dominant weight $\bm\lambda$ such that the highest weight module $L(\bm\lambda)$ is isomorphic to $U^{\lambda}$. Such a highest weight $\bm\lambda$ is called {\it covariant highest weight}.
The one to one correspondence between the covariant highest weights and partitions in $H(m,n)$ is given as follows.
For any partition $\lambda$ in $H(m,n)$, the corresponding covariant highest weight $\bm\lambda=(\bm\lambda_1,\ldots, \bm\lambda_m \mid \bm\lambda_{m+1},\ldots, \bm\lambda_{m+n})$ is given by
\beq
\bal
\bm\lambda_{i}=\lambda_i, \ 1 \leq i \leq m,\\
\bm\lambda_{m+i}=\max\{0,\lambda'_i-m\}, \ 1\leq i \leq n,
\eal
\eeq
where $\lambda'$ is the partition conjugate to $\lambda$.
%Conversely if $\bm\lambda=(\bm\lambda_1,\ldots, \bm\lambda_m \mid \bm\lambda_{m+1},\ldots, \bm\lambda_{m+n})$ is a covariant highest weight
% then the components of $\lambda$ are given explicitly by
%\beq
%\bal
%\lambda_i=\bm\lambda_i, \ 1\leq i \leq m,\\
%\lambda_{m+i}= \sharp \{j \mid \bm\lambda_{m+j}\geq i, \ 1\leq j \leq n\}, \ 1\leq i \leq n.
%\eal
%\eeq

Let $\{v_{\Tc} \mid \Tc \in \mathrm{SYT}(\lambda)\}$ be the Young's orthonormal
basis of $V^{\lambda}$.
Define a nondegenerate symmetric  bilinear form $\langle \cdot | \cdot \rangle$ on  $(\CC^{m|n})^{\ot r}$ by
\beq\label{inner-product}
\bal
\langle e_{i_1}\ot \cdots \ot e_{i_r}, e_{j_1}\ot \cdots \ot e_{j_r}  \rangle&=\prod_{1 \leq k \leq r}\delta_{i_k,j_k}.
\eal
\eeq
We introduce the (involutive) anti-automorphic $*$-operations for $\Hc_r $  and $U_q(\gl_{m|n})$  respectively. Let  $T_{\si}^{*}=T_{\si^{-1}} \in \Hc_r$. And $(q^h)^{*}=q^h, E_i^*=F_i,$ and  $F_i^*=E_i$ in $U_q(\gl_{m|n})$.
\begin{lemma}\label{adjoint-op}
The $*$-operations for $\Hc_r$  and $U_q(\gl_{m|n})$  afford a contravariant  bilinear form, i.e.
\beq\label{contravariant-act}
\bal
  &\langle x\cdot  e_{i_1}\ot \cdots \ot e_{i_r}, e_{j_1}\ot \cdots \ot e_{j_r}  \rangle\\
  &=\langle  e_{i_1}\ot \cdots \ot e_{i_r}, x^* \cdot e_{j_1}\ot \cdots \ot e_{j_r}  \rangle,
\eal
\eeq
where $x$ is any element in $\Hc_r$ or $U_q(\gl_{m|n})$.
\end{lemma}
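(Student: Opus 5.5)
Since both $*$-operations are anti-automorphisms and the bilinear form is symmetric, it is enough to check \eqref{contravariant-act} when $x$ is a generator. The set of $x$ satisfying \eqref{contravariant-act} is closed under linear combinations. It is also closed under products: if $x$ and $y$ satisfy it, then
\[
\langle xy\,u,v\rangle=\langle y\,u,x^*v\rangle=\langle u,y^*x^*v\rangle=\langle u,(xy)^*v\rangle .
\]
So for $\Hc_r$ we only need to treat $x=T_k$, $1\le k\le r-1$, with $T_k^*=T_k$. For $U_q(\gl_{m|n})$ we need to treat $x=q^h$, $E_i$ and $F_i$. By symmetry of the form, the cases $E_i$ and $F_i$ are equivalent: the identity for $F_i$ follows from the one for $E_i$ after swapping the two arguments.

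For $x=T_k$, the operator is $\check R_k$, which acts only on tensor factors $k,k+1$, and the form \eqref{inner-product} is a product over tensor positions. The claim therefore reduces to showing that the $(m+n)^2\times(m+n)^2$ matrix of $\check R$ in \eqref{PR} is symmetric in the basis $e_i\ot e_j$. We read off matrix entries from \eqref{PR}. The diagonal entries $\check R(e_i\ot e_i)=(-1)^{\bar i}q_i\,e_i\ot e_i$ cause no issue. For $i\neq j$, the coefficient of $e_j\ot e_i$ in $\check R(e_i\ot e_j)$ is a sign $\pm1$; using the relation $\check R(v_i\ot v_j)=(-1)^{\bar i\bar j}v_j\ot v_i$ noted in the paper, this sign is $(-1)^{\bar i\bar j}$. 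That expression is symmetric in $i,j$, so the off-diagonal part of the matrix is symmetric. The remaining term $(q-q^{-1})e_{ii}\ot e_{jj}$ with $i>j$ is diagonal, hence also symmetric. This is the \emph{main obstacle}: one has to keep the super sign conventions straight when expanding $e_{ij}\ot e_{ji}$ acting on $e_a\ot e_b$, including the Koszul sign from moving $e_{ji}$ past $e_a$. I would first verify the sign in the two cases $i<j$ and $i>j$ separately and confirm that each equals $(-1)^{\bar i\bar j}$.

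For $U_q(\gl_{m|n})$, the element $q^h$ acts diagonally on the basis $e_{i_1}\ot\cdots\ot e_{i_r}$, so it is self-adjoint. For $E_i$, the action on the tensor space comes from iterating $\Delta(E_i)=E_i\ot1+k_i\ot E_i$. This gives
\[
\Delta^{(r)}(E_i)=\sum_a k_i^{\ot(a-1)}\ot E_i\ot 1^{\ot(r-a)},
\]
and $\Delta^{(r)}(F_i)$ has the analogous form $\sum_a 1^{\ot(a-1)}\ot F_i\ot (k_i^{-1})^{\ot(r-a)}$. In the super setting these terms carry Koszul signs when the odd generators pass earlier tensor factors. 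Here we use that $\rho(E_i)=e_{i,i+1}$ and $\rho(F_i)=e_{i+1,i}$ are transposes of each other. Comparing $\langle E_i\,u,v\rangle$ with $\langle u,F_i\,v\rangle$ on basis vectors that differ only in position $a$, both sides are nonzero exactly when $u$ has $i+1$ at position $a$ and $v$ has $i$ there. On the left, the $k_i$-factors act on the first $a-1$ entries of $u$. On the right, the $k_i^{-1}$-factors act on the last $r-a$ entries of $v$, which may differ. So a direct match requires either a suitably modified comultiplication or a check that the weight factors agree. I would compute both scalar factors explicitly and compare them, including the Koszul signs $(-1)^{\sum_{b<a}\bar i_b}$ coming from the odd $E_m$ and $F_m$.

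If these factors do not match literally, I would adjust the convention, for instance by using the opposite coproduct on one side, or by absorbing the discrepancy into the $k$-factors, which commute with the weight-space decomposition. The key check is that the products of $q$-powers agree on the relevant matrix coefficient.
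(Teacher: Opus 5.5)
The paper states this lemma without proof, so there is no argument to compare routes with. On its own terms, your proposal settles the $\Hc_r$ half but not the $U_q(\gl_{m|n})$ half, and the step you deferred is exactly where the statement breaks. The reduction to generators is fine. So is the symmetry of $\check R$: the coefficient of $e_b\ot e_a$ in $\check R(e_a\ot e_b)$ is $(-1)^{\bar a}(-1)^{(\bar a+\bar b)\bar a}=(-1)^{\bar a\bar b}$, so $T_\sigma^*=T_{\sigma^{-1}}$ is contravariant. The Koszul signs for $E_m,F_m$ are harmless as well. Every term of $\Delta^{(r)}(E_i)$ or $\Delta^{(r)}(F_i)$ has at most one odd factor, at position $a$, with even diagonal factors elsewhere. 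Since $u,v$ agree off position $a$, both sides acquire the same sign $(-1)^{\sum_{b<a}\bar u_b}$.

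The gap is the comparison of $q$-powers, which you left as ``compute, and if it fails, change conventions''. It does fail. Write $\kappa(j)$ for the eigenvalue of $k_i$ on $e_j$. The left side carries $\prod_{b<a}\kappa(u_b)$ and the right side carries $\prod_{b>a}\kappa(u_b)^{-1}$; these differ by the factor $\prod_{b\neq a}\kappa(u_b)$. For instance, take $r=2$, $m\ge 2$, $i<m$, $u=e_{i+1}\ot e_{i+1}$, $v=e_i\ot e_{i+1}$. Then $E_iu=e_i\ot e_{i+1}+q^{-1}e_{i+1}\ot e_i$ and $F_iv=q\,e_{i+1}\ot e_{i+1}$, so $\langle E_iu,v\rangle=1\neq q=\langle u,F_iv\rangle$. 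Similar examples exist for $i>m$ and for $i=m$.

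Hence, with the paper's coproduct and $E_i^*=F_i$, the identity is false for $r\ge 2$. Your fallback changes the statement rather than proving it. Using the opposite coproduct ``on one side'' is not available either, because the module structure on $(\CC^{m|n})^{\ot r}$ is fixed by $\Delta$.

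What your method does prove is the version with $E_i^*=q_i^{-1}F_ik_i$, $F_i^*=q_ik_i^{-1}E_i$, $(q^h)^*=q^h$, which is again an involutive anti-automorphism. The reason is that $\Delta(F_ik_i)=F_ik_i\ot 1+k_i\ot F_ik_i$ has the same shape as $\Delta(E_i)$. Moreover, $F_ik_i$ acts on $\CC^{m|n}$ as $q_ie_{i+1,i}$, since $k_ie_i=q_ie_i$. So $q_i^{-1}F_ik_i$ acts on $(\CC^{m|n})^{\ot r}$ by $\sum_a k_i^{\ot(a-1)}\ot e_{i+1,i}\ot 1^{\ot(r-a)}$, which is exactly the transpose of the action of $E_i$. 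The paper's subsequent orthonormality claim for the vectors $\zeta_\Lambda\ot v_\Tc$ rests on this lemma, and would need to be rechecked with the corrected $*$.
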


There is a parameterization of basis vectors in $L(\bm \lambda)$ by the combinatorial objects called the {\it Gelfand-Tsetlin patterns} (GT patterns for short). Such a pattern  $\Lambda=(\lambda_{ij})$ (associated with $\bm \lambda$) is a triangular array
\begin{equation}\label{GT pattern}
%\Lambda=
\begin{array}{cccccccc}
   \lambda_{m+n,1}   &  \cdots      & \lambda_{m+n,m} & \lambda_{m+n,m+1} & \cdots & \lambda_{m+n,m+n-1} & \lambda_{m+n,m+n} \\
  \lambda_{m+n-1,1} &  \cdots     & \lambda_{m+n-1,m} & \lambda_{m+n-1,m+1} & \cdots & \lambda_{m+n-1,m+n-1} & \\
  \vdots    &  \vdots & \vdots & \vdots   & \reflectbox{$\ddots$}    \\
  \lambda_{m+1,1} &  \cdots  & \lambda_{m+1,m} & \lambda_{m+1,m+1}    \\
  \lambda_{m,1} &  \cdots  & \lambda_{m,m}    \\                       \vdots    &  \reflectbox{$\ddots$}    \\
  \lambda_{1,1}
\end{array}
\end{equation}
The set of  vectors $\zeta_{\Lambda}$ parameterized by all GT patterns  $\Lambda=(\lambda_{ij})$ satisfying the conditions
\begin{equation}\label{pattern-condition}
 \begin{array}{rl}
(1) & \lambda_{m+n,j}=\bm \lambda_j , \ 1\leq j\leq m+n;\\
(2)&\theta_{p-1,i}:=\lambda_{pi}-\lambda_{p-1,i}\in\{0,1\},\ 1\leq i\leq m, m+1\leq p\leq m+n;\\
(3)  &  \lambda_{pm}\geq \# \{i:\lambda_{pi}>0, m+1\leq i \leq p\}, \  m+1\leq p\leq m+n ;\\
(4)& \text{if }
\lambda_{m+1,m}=0, \text{then}\; \theta_{mm}=0; \\
(5)& \lambda_{pi}-\lambda_{p,i+1}\in{\mathbb Z}_+,\ 1\leq i\leq m-1,
    m+1\leq p\leq m+n-1;\\
(6)& \lambda_{i,j}-\lambda_{i-1,j}\in{\mathbb Z}_+\text{ and }\lambda_{i-1,j}-\lambda_{i,j+1}\in{\mathbb Z}_+,\\
  & 1\leq j\leq i\leq m\text{ or } m+1\leq j\leq i\leq m+n.
 \end{array}
\end{equation}
constitutes a basis in $L(\bm\lambda)$.

Given a GT pattern $\Lambda=(\lambda_{ij})$, we set
\begin{equation}\label{eq:lki}
l_{ij}=\left\{\begin{array}{cc}
\lambda_{ij}-j+1, &1 \leq j\leq m;\\
-\lambda_{ij}+j-2m, &m+1\leq j\
\leq  m+n.
\end{array}\right.
\end{equation}

\begin{proposition}[\cite{PSV, Lu}]
The actions of the generators of $U_q(\gl_{m|n})$ are given by the formulas
\begin{equation*}
q^{\epsilon_k}\zeta_{\Lambda}=q^{\left(\sum_{j=1}^{k}\lambda_{kj}-\sum_{j=1}^{k-1}\lambda_{k-1,j}\right)}\zeta_{\Lambda},\quad 1\leq k\leq m+n;
\end{equation*}
\begin{equation*}
E_k\zeta_{\Lambda}=-\sum_{i=1}^{k}\frac{\Pi_{j=1}^{k+1}(l_{k+1,j}-l_{ki}) }
  {\Pi_{j\neq i,j=1}^{k} (l_{kj}-l_{ki}) }\zeta_{\Lambda+\delta_{ki}},
\quad 1\leq k\leq m-1;
\end{equation*}
\begin{equation*}
F_k\zeta_{\Lambda}=\sum_{i=1}^{k}\frac{\Pi_{j=1}^{k-1}(l_{k-1,j}-l_{ki}) }{\Pi_{j\neq i,j=1}^{k} (l_{kj}-l_{ki}) }\zeta_{\Lambda-\delta_{ki}},
\quad 1\leq k\leq m-1;
\end{equation*}
\ben
\bal
E_m\zeta_{\Lambda}=&\sum_{i=1}^{m}\theta_{mi}(-1)^{i-1}(-1)^{\theta_{m1}+\ldots+\theta_{m,i-1}}\\
&\times  \frac{\prod_{1\leq j< i\leq m} (l_{mj}-l_{mi}-1)}
  {\prod_{1\leq i<j\leq m} (l_{mj}-l_{mi})
    \prod_{j\neq i,j=1}^{m}(l_{m+1,j}-l_{mi}-1)}
    \zeta_{\Lambda+\delta_{mi}},
\eal
\een

\begin{equation*}
\begin{split}
F_m \zeta_{\Lambda}=&\sum_{i=1}^{m}(1-\theta_{mi})(-1)^{i-1}(-1)^{\theta_{m1}+\ldots+\theta_{m,i-1}}
\\
&\times  \frac{(l_{mi}-l_{m+1,m+1})\Pi_{1\leq  i<j\leq m} (l_{mj}-l_{mi}+1)\Pi_{j=1}^{m-1}(l_{m-1,j}-l_{mi})}
  {\Pi_{1\leq j< i\leq m} (l_{mj}-l_{mi})} \zeta_{\Lambda-\delta_{mi}},
\end{split}
\end{equation*}
and for $m+1\leq k\leq m+n-1$
\begin{equation*}
\begin{split}
E_k \zeta_{\Lambda}=&\sum_{i=1}^{m}\theta_{ki}(-1)^{\vartheta_{ki}}(1-\theta_{k-1,i})\times
\prod_{j\neq i,j =1}^{m}\frac{(l_{kj}-l_{ki}-1)}{(l_{k+1,j}-l_{ki}-1)}
\zeta_{\Lambda+\delta_{ki}}
\\
&-\sum_{i=m+1}^{k}
\prod_{j=1}^{m}\frac{(l_{kj}-l_{ki})(l_{kj}-l_{ki}+1)}{(l_{k+1,j}-l_{ki})(l_{k-1,j}-l_{ki}+1)} \times
  \frac{\Pi_{j=m+1}^{k+1}(l_{k+1,j}-l_{ki})}
  {\Pi_{j\neq i,j=m+1}^{k} (l_{kj}-l_{ki})}\zeta_{\Lambda+\delta_{ki}},
\end{split}
\end{equation*}

\begin{equation*}
\begin{split}
F_k\zeta_{\Lambda}=&
\sum_{i=1}^{m}\theta_{k-1,i}(-1)^{\vartheta_{ki}}(1-\theta_{ki})\times\frac{\Pi_{j=m+1}^{k+1}(l_{k+1,j}-l_{ki})\Pi_{j=m+1}^{k-1}(l_{k-1,j}-l_{ki}+1)}
  {\Pi_{j=m+1}^{k} (l_{kj}-l_{ki})(l_{kj}-l_{ki}+1)}
\\
&\times
\prod_{j\neq i,j=1}^{m}\frac{(l_{kj}-l_{ki}+1)}{(l_{k-1,j}-l_{ki}+1)}
\zeta_{\Lambda-\delta_{ki}}
  + \sum_{i=m+1}^{k}
\frac{\prod_{j=m+1}^{k-1}(l_{k-1,j}-l_{ki})}{\prod_{j\neq i,j =m+1}^{k}(l_{kj}-l_{ki})}
\zeta_{\Lambda-\delta_{ki}}.
\end{split}
\end{equation*}
Here $\vartheta_{k,i}=\theta_{k1}+\ldots+\theta_{k,i-1}+\theta_{k-1,i+1}+\ldots+\theta_{k-1,m}$. The patterns $\Lambda\pm \delta_{ki}$ are obtained from $\Lambda$ by replacing $\lambda_{ki}$ with $\lambda_{ki}\pm1$.  We assume  that
$\zeta_{\Lambda}=0$  if the pattern $\Lambda$ does not satisfy the conditions \eqref{pattern-condition}.
\end{proposition}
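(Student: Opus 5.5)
The plan is to verify directly that the operators defined by these formulas on the span of the $\zeta_\Lambda$ satisfy the defining relations of $U_q(\gl_{m|n})$ listed in Section 2. Then we identify the resulting module with $L(\bm\lambda)$ by locating its highest weight vector.

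\textbf{Step 1: the Cartan relations.} The $q^{\epsilon_k}$ act diagonally, with the weight read off from row sums of $\Lambda$. The relations $q^hE_iq^{-h}=q^{\langle h,\alpha_i\rangle}E_i$ and $q^hF_iq^{-h}=q^{-\langle h,\alpha_i\rangle}F_i$ are then immediate. The reason is that $\zeta_{\Lambda\pm\delta_{ki}}$ changes the row-$k$ sum by $\pm1$, and only $\epsilon_k$ and $\epsilon_{k+1}$ detect this.

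\textbf{Step 2: the far-commutation and Serre relations.}
\begin{itemize}
\item For $|i-j|\ge2$, the supercommutators $[E_i,E_j]$, $[F_i,F_j]$ and $[E_i,F_j]$ vanish. This holds because the coefficients for index $k$ depend only on rows $k-1,k,k+1$ of the pattern, so the two actions modify disjoint rows of $\Lambda$. The sign $(-1)^{\vartheta}$ must be tracked when both operators are odd or act on the odd block.
\item The relations $E_m^2=F_m^2=0$ follow from $\theta_{mi}\in\{0,1\}$ together with the antisymmetric sign factor $(-1)^{\theta_{m1}+\cdots+\theta_{m,i-1}}$. Under this factor, the two orders of raising $\lambda_{mi}$ and $\lambda_{mj}$ cancel.
\end{itemize}

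\textbf{Step 3: the key relation $[E_k,F_k]$.} The main computation is
\[
E_kF_k-(-1)^{p(E_k)}F_kE_k=\frac{q^{(-1)^{\bar k}H_k}-q^{-(-1)^{\bar k}H_k}}{q_k-q_k^{-1}}.
\]
The off-diagonal terms $\zeta_{\Lambda+\delta_{ki}-\delta_{kj}}$ with $i\ne j$ should cancel pairwise. The diagonal terms reduce to a rational identity in the variables $l_{ki}$. That identity is a partial-fraction or Lagrange-interpolation identity of the form
\[
\sum_i \prod_j \frac{(l_{k+1,j}-l_{ki})\,(l_{k-1,j}-l_{ki}+1)}{\prod_{j\ne i}(l_{kj}-l_{ki})(l_{kj}-l_{ki}+1)}=\text{linear in row sums}.
\]
It is proved by taking residues of a suitable rational function in an auxiliary variable $z$, exactly as in the classical Gelfand–Tsetlin verification. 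For $k=m$ and $k>m$ the mixed even/odd structure produces the factors $(1-\theta)$ and $\theta$. Here the needed identity is obtained by splitting the sum into $i\le m$ and $i>m$ and applying interpolation to each part separately.

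The cubic Serre relations and the quartic relation at $m$ need not be checked by brute force. Once the Cartan relations, $E_m^2=F_m^2=0$ and $[E_i,F_j]$ hold, and the module is finite dimensional with a highest weight vector, the Serre-type elements generate a submodule with no highest weight vector of the allowed weight. They therefore act by zero, provided we show the module is generated by $\zeta_{\Lambda_0}$.

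\textbf{Step 4: identification with $L(\bm\lambda)$.} Take $\Lambda_0$ to be the pattern with $\lambda_{ij}$ maximal; it is annihilated by all $E_k$ and has weight $\bm\lambda$. Irreducibility is then shown by checking that every $\zeta_\Lambda$ is reached from $\zeta_{\Lambda_0}$ by products of the $F_k$ with nonzero coefficients. A dimension count confirms the identification: the number of patterns satisfying \eqref{pattern-condition} equals the number of $(m|n)$-semistandard tableaux of shape $\lambda$, which is $\dim U^\lambda$ by \eqref{Schur-Weyl-decom}.

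\textbf{Main obstacle.} Step 3 is where I expect the real difficulty, in particular the sign bookkeeping with $\vartheta_{ki}$ for $k>m$. I would first try to establish the $q=1$ identities by the residue method, since the displayed coefficients are in fact $q$-independent. I would then deduce the quantum identity by checking that the right-hand side evaluated on $\zeta_\Lambda$ is $[\,\cdot\,]_q$ of the same integer. If it is not, the coefficients presumably need $q$-numbers, and I would compare with \cite{PSV, Lu}.
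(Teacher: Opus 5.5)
\textbf{Main gap: Step 3 fails because the statement as printed is false for generic $q$.} The displayed coefficients are $q$-independent, and with $q$-independent coefficients the relation $E_kF_k-(-1)^{p(E_k)p(F_k)}F_kE_k=\frac{q^{(-1)^{\bar k}H_k}-q^{-(-1)^{\bar k}H_k}}{q_k-q_k^{-1}}$ cannot hold. So the obstruction is not sign bookkeeping.

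Here is a concrete instance. Take $m\ge 2$ and $\lambda=(2)$. Let $\Lambda$ be the pattern of the one-row tableau with both entries equal to $2$: $\lambda_{p1}=2$ and $\lambda_{pj}=0$ ($j\ge 2$) for $p\ge 2$, and $\lambda_{11}=0$. Then $l_{21}=2$, $l_{22}=-1$, $l_{11}=0$, and the displayed formulas give:
\begin{itemize}
\item $F_1\zeta_\Lambda=0$, since the pattern with $\lambda_{11}=-1$ is invalid;
\item $E_1\zeta_\Lambda=-(2)(-1)\zeta_{\Lambda+\delta_{11}}=2\zeta_{\Lambda+\delta_{11}}$;
\item $F_1\zeta_{\Lambda+\delta_{11}}=\zeta_\Lambda$.
\end{itemize}
Hence
\[
(E_1F_1-F_1E_1)\zeta_\Lambda=-2\,\zeta_\Lambda .
\]
On the other hand, $q^{\epsilon_1}$ and $q^{\epsilon_2}$ act by $1$ and $q^{2}$. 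The required right-hand side is therefore $\frac{q^{-2}-q^{2}}{q-q^{-1}}\zeta_\Lambda=-(q+q^{-1})\zeta_\Lambda$.

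No renormalization $\zeta_\Lambda\mapsto c_\Lambda\zeta_\Lambda$ can repair this. The product of the $E_1$-coefficient $\Lambda\to\Lambda+\delta_{11}$ and the $F_1$-coefficient back is invariant under rescaling, and it equals $2$ where $[2]_q$ is needed.

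So the displayed formulas are the $q=1$ Gelfand--Tsetlin formulas for $\gl_{m|n}$. The fallback you mention at the very end is therefore not optional. The linear factors must be $q$-deformed into $q$-numbers $[\,\cdot\,]_q$, as in the quantum formulas of \cite{PSV}. Step 3 then needs genuine $q$-analogues of the interpolation identities. Your plan to prove the $q=1$ identity and then ``read off $[\,\cdot\,]_q$ of the same integer'' is exactly what the example rules out: with $q$-free coefficients, the left side \emph{is} that integer.

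\textbf{Comparison with the paper and remaining issues.} The paper gives no argument of its own here; it simply quotes \cite{PSV, Lu}. Once the correct $q$-deformed coefficients are in place, your architecture is the natural one: verify the relations on the span of the $\zeta_\Lambda$, then identify the module by its highest weight and a dimension count. Even then, two steps need more than you wrote.
\begin{itemize}
\item \emph{Step 2.} You only treat $E_iF_j-(\pm)F_jE_i$ for $|i-j|\ge 2$, but it must vanish for all $i\ne j$. For $|i-j|=1$ the rows involved overlap. For example, the $E_k$-coefficient depends on row $k+1$, which $F_{k+1}$ changes, and the $F_{k+1}$-coefficient depends on row $k$, which $E_k$ changes. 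This is a genuine identity, not a disjointness argument.
\item \emph{Step 4.} Reaching every $\zeta_\Lambda$ from $\zeta_{\Lambda_0}$ by the $F_k$ gives cyclicity, not irreducibility. Cyclicity plus the dimension count identifies the module with $L(\bm\lambda)$, and so yields the cubic and quartic Serre relations, only if you invoke one further fact. That fact is that the Serre elements act by zero on the irreducible highest-weight quotient of the Verma module over the algebra without Serre relations. It is standard but nontrivial, especially for the quartic relation at the odd node. Saying that the Serre elements ``generate a submodule with no highest weight vector'' is not by itself an argument.
\end{itemize}
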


We can identity the GT patterns $\Lambda=(\lambda_{ij})$ with the semistandard supertableaux  of shape $\lambda \in H(m,n;r)$, which satisfying the following conditions:

(a) the entries weakly increase from left to right along each row and down each column;

(b) the entries in $\{1,\ldots, m\}$ strictly increase down each column;

(c) the entries in $\{m + 1,\ldots, m+n\}$ strictly increase from left to right along each row.\\
We denote by $SSYT(\lambda)$ the set of semistandard supertableaux of shape $\lambda$. We say a supertableau $\Lambda$ of weight $\mu$ is that  $\mu_i$ equals the number of $i's$ in $\Lambda$.

Define the matrix with spectral parameter $u$
\beq
T(u)=u T^{+}-u^{-1}T^{-}.
\eeq
The {\it quantum Berezinian} is an element in $U_q(\gl_{m|n})[[u]]$ defined by
\begin{equation}
\begin{split}
\Ber_{m+n}(u)=&\sum_{\si\in \Sym_{m}} (-q)^{-l(\si)} T(uq^{-m+1})_{\si_m,m}\cdots T(u)_{\si_1,1}
\\
&\times\sum_{\tau\in \Sym_{n}}
 (-q)^{-l(\tau)} T(uq^{-m+1})_{m+1,m+\tau(1)}^{-1} \cdots  T(uq^{-m+n})_{m+n,m+\tau(n)}^{-1}.
\end{split}
\end{equation}
The coefficients of quantum Berezinian are central in $U_q(\gl_{m|n})$ proved in  \cite{JLiZ} via an evaluation homomorphism from quantum affine superalgebra $U_q(\widehat{\gl}_{m|n})$ to $U_q(\gl_{m|n})$.
Then $\Ber_{m+n}(u)$  acts on $L(\bm \lambda)$ by the scalar
\begin{equation}
\frac{\prod_{i=1}^{m}(uq^{l_{m+n,i}}-u^{-1}q^{-l_{m+n,i}})}
{\prod_{j=m+1}^{m+n}(uq^{l_{m+n,j}}-u^{-1}q^{-l_{m+n,j}})}.
\end{equation}
From the branching rules of $L(\bm\lambda)$, we can obtain that
\beq\label{central-char}
\bal
 &\Ber_k(u)\zeta_{\Lambda}=\prod_{i=1}^{k}(uq^{l_{ki}}-u^{-1}q^{-l_{ki}}) \zeta_{\Lambda} \ \ \text{for} \ 1 \leq k \leq m,\\
 &\Ber_k(u)\zeta_{\Lambda}=\frac{\prod_{i=1}^{m}(uq^{l_{ki}}-u^{-1}q^{-l_{ki}})}
{\prod_{j=m+1}^{k}(uq^{l_{kj}}-u^{-1}q^{-l_{kj}})} \zeta_{\Lambda} \ \ \text{for} \ m+1 \leq k \leq m+n.
\eal
\eeq

\subsection{Weight space interpretation of quantum super immanants}
We can decompose \eqref{Schur-Weyl-decom} into basis vectors for $U^{\lambda}\ot V^{\lambda}$
\beq\label{Schur-Weyl-decom2}
(\CC^{m|n})^{\ot r}\cong\sum_{\lambda \in H(m,n;r)} \sum_{(\Lambda,\Tc)} \mathbb C \zeta_{\Lambda}\ot v_{\Tc}.
\eeq
 By Lemma \ref{adjoint-op}, these basis vectors in decomposition \eqref{Schur-Weyl-decom2} are normalized orthogonal, i.e.
$$\langle \zeta_{\Lambda}\ot v_{\Tc}, \zeta_{\Lambda'}\ot v_{\Tc'}\rangle =\delta_{\Lambda\Lambda'}\delta_{\Tc\Tc'}$$
for any semistandard supertableaux $\Lambda,\Lambda'$ and standard  Young tableaux $\Tc,\Tc'$.

An $(m+n)$-tuple $(a_1,\ldots,a_{m+n})$ in $\mathbb{Z}_{\geq 0}^{m+n}$ such that
$a_1+\cdots +a_{m+n}=r$ is called {\it a weak composition of $r$ into $(m+n)$ parts} or {\it a weak $(m+n)$-composition of $r$}.
It's clear that
the weights of $U_q(\gl_{m|n})$ in $(\CC^{m|n})^{\ot r}$ are exactly weak $(m+n)$-compositions of $r$.
For any weak $(m+n)$-composition $\bm\mu$ of $r$, we define the projection operator
$$\mathcal P_{\bm\mu}: (\CC^{m|n})^{\ot r}\rightarrow ((\CC^{m|n})^{\ot r})_{\bm \mu},$$
then $\mathcal P_{\bm \mu}: L(\bm\lambda)\rightarrow L(\bm\lambda)_{\bm \mu}$, where $( (\CC^{m|n})^{\ot r})_{\bm \mu}$ and $L(\bm\lambda)_{\bm \mu}$ are the subspaces with weight $\bm \mu$ as  $U_q(\gl_{m|n})$-modules.

Now we give a  weight space interpretation of quantum super immanants.
\begin{theorem}\label{q-super-Kostant-thm}
  Let $\lambda \in H(m,n;r)$, $\bm\mu$ be  a weak  $(m+n)$-composition of $r$, and assume the multiset $I=(1^{\mu_1},2^{\mu_2},\ldots,(m+n)^{\mu_{m+n}})$ of $[m+n]$. Then
\beq\label{q-super-Kostant-iden}
\frac{\Imm_{\chi_q^{\lambda}}(X_I)}{\alpha_{\mathrm{q}^2}(I) }=str(\mathcal P_{\bm\mu}\ot 1)\circ\Pi_r\circ \mathcal P_{\bm \mu}|_{L(\bm\lambda)}.
\eeq
\end{theorem}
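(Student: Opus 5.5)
Start from Proposition~\ref{imm-char}, which gives
\[
\frac{\Imm_{\chi_q^{\lambda}}(X_I)}{\alpha_{\mathrm q^2}(I)}=\frac{(-1)^{\bar I}}{\alpha(I)}\sum_{\si\in\Sym_r}\langle i_{\si_1},\dots,i_{\si_r}\mid \Ec_{\Tc}^{\lambda}X_1\cdots X_r\mid i_{\si_1},\dots,i_{\si_r}\rangle .
\]
Each distinct rearrangement of $I$ occurs exactly $\alpha(I)$ times as $\si$ runs over $\Sym_r$. Hence the right side equals the signed sum of diagonal matrix coefficients $\langle J\mid\Ec_{\Tc}X_1\cdots X_r\mid J\rangle$ over all $J$ in the $\Sym_r$-orbit of $I$. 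These $J$ index precisely the standard basis of the weight space $((\CC^{m|n})^{\ot r})_{\bm\mu}$. The sign $(-1)^{\bar I}=(-1)^{\bar J}$ is the supertrace sign. So the left side is a partial supertrace:
\[
\frac{\Imm_{\chi_q^{\lambda}}(X_I)}{\alpha_{\mathrm q^2}(I)}=str\big((\mathcal P_{\bm\mu}\ot 1)\circ \Ec_{\Tc}X_1\cdots X_r\circ\mathcal P_{\bm\mu}\big).
\]
Here $X_1\cdots X_r$ is read as the operator encoding the comodule map $\Pi_r$. I will check against the sign $(-1)^{\sum_{a<b}\bar i_b(\bar i_a+\bar j_a)}$ in $\Pi_r$ that the diagonal coefficients of $X_1\cdots X_r$ agree with those of $\Pi_r$ up to exactly the factor $(-1)^{\bar J}$ absorbed by $str$. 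When $i_a=j_a$, the exponent $\bar i_a+\bar j_a$ is even, so the off-diagonal sign factors vanish on the diagonal.

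Next I change basis on the weight space. By \eqref{Schur-Weyl-decom2} and Lemma~\ref{adjoint-op}, the vectors $\zeta_\Lambda\ot v_{\Tc'}$ form an orthonormal basis of $(\CC^{m|n})^{\ot r}$. Those with $\Lambda$ of weight $\bm\mu$ span the $\bm\mu$-weight space, since $\Hc_r$ commutes with $U_q(\gl_{m|n})$ and so preserves weights. The supertrace is basis independent, so I compute it in this basis. Because $\Ec_{\Tc}^{\lambda}$ acts on $V^{\lambda'}$ as the projection onto $\CC v_{\Tc}$ if $\lambda'=\lambda$ and as $0$ otherwise, only the vectors $\zeta_\Lambda\ot v_{\Tc}$ with $\Lambda\in SSYT(\lambda)$ of weight $\bm\mu$ survive. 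The operator $X_1\cdots X_r$ commutes with $\check R$, so the comodule map $\Pi_r$ is an $\Hc_r$-homomorphism. The $\Hc_r$-isotypic component of type $\lambda$ is a subcomodule, and the multiplicity space $U^\lambda\ot v_{\Tc}$ is identified with $L(\bm\lambda)$.

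Under this identification, the sum collapses to $\sum_{\Lambda}\langle\zeta_\Lambda\ot v_\Tc\mid (\mathcal P_{\bm\mu}\ot1)\Pi_r\mid \zeta_\Lambda\ot v_\Tc\rangle$ with parity signs. This is $str(\mathcal P_{\bm\mu}\ot1)\circ\Pi_r\circ\mathcal P_{\bm\mu}|_{L(\bm\lambda)}$. Here I use that $\Ec_\Tc$ is self-adjoint for the form \eqref{inner-product}. This follows from $T_\si^*=T_{\si^{-1}}$ together with \eqref{e:idem} and the symmetry of $\langle T_{\si^{-1}}v_\Tc,v_\Tc\rangle$, via Lemma~\ref{adjoint-op}. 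With self-adjointness, $\Ec_\Tc$ can be moved to act on the bra side as well.

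The main obstacle is sign bookkeeping. The three sign conventions must match: the $(-1)^{\gamma(I,J)}$ in matrix coefficients, the sign in $\Pi_r$, and the supertrace sign. In addition, the parity of $\zeta_\Lambda\ot v_\Tc$ must equal $(-1)^{\bar J}$ for the weight $\bm\mu$, so that the supertrace on the Gelfand–Tsetlin basis agrees with the signed sum over standard basis vectors. That parity is determined by $\sum_{i>m}\mu_i$, so it is constant on the weight space, which makes the equality plausible. I would verify it first for $r=1,2$ and then argue in general using the weight-homogeneity of the parity.
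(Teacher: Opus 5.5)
Your argument is correct, but it follows a different route from the paper's proof.

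\textbf{The paper's route.} The paper works through Lemma~\ref{schur-weyl-corr}. It uses the explicit vectors $\sqrt{c_\lambda/\alpha_{\mathrm q^2}(I)}\,\Ec_\Tc^\lambda\, e_{i_1}\ot\cdots\ot e_{i_r}=c\,\zeta_{\theta_{\bm\mu}(\Tc)}\ot v_\Tc$ to write the weight-space supertrace on $L(\bm\lambda)$ as a pairing against $\sum_\Tc\Ec_\Tc\, e_{i_1}\ot\cdots\ot e_{i_r}$. It then replaces $c_\lambda\sum_\Tc\Ec_\Tc$ by its expansion in the $\check R_\si$ with character coefficients and expands $\Pi_r$. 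Finally it matches the result term by term with the corresponding expansion of $(\chi_q^\lambda X_1\cdots X_r)^{i_1,\ldots,i_r}_{i_1,\ldots,i_r}$. In this route, $\alpha_{\mathrm q^2}(I)$ enters through the normalization in the lemma.

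\textbf{Your route.} You get $\alpha_{\mathrm q^2}(I)$ from Proposition~\ref{imm-char} by the orbit count. You read the left side as the supertrace of $\Ec_\Tc X_1\cdots X_r$ on the $\bm\mu$-weight space. You then compress that trace to $\mathrm{Im}\,\Ec_\Tc=U^\lambda\ot v_\Tc\cong L(\bm\lambda)$, using only two facts: $\Ec_\Tc$ commutes with $(\mathcal P_{\bm\mu}\ot 1)\circ\Pi_r\circ\mathcal P_{\bm\mu}$, and it acts as $1\ot(\text{projection onto }\CC v_\Tc)$ in the Schur--Weyl decomposition.

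\textbf{Comparison.} Your route avoids the Gelfand--Tsetlin formulas, the loosely specified constants $c$ of Lemma~\ref{schur-weyl-corr}, and the character expansion. You do not even need orthonormality or self-adjointness: any basis of the weight space adapted to $\mathrm{Im}\,\Ec_\Tc\oplus\ker\Ec_\Tc$ gives the compression. What the paper's route buys in exchange is the explicit bimodule basis, which is one of its advertised results.

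\textbf{Sharpening the sign discussion.} Two places need attention.
\begin{itemize}
\item The parity worry is vacuous. Every vector of the $\bm\mu$-weight space has parity $\bar I$, so $str$ there is $(-1)^{\bar I}$ times the ordinary trace. That trace is invariant under any scalar change of basis, since scalars are central in $A_q(\Mat_{m|n})$ and the noncommuting entries cause no trouble.
\item A diagonal check alone is not enough, because $\Ec_\Tc$ mixes the rearrangements $K=(k_1,\ldots,k_r)$ of $J$. You need $\langle k_1,\ldots,k_r\mid X_1\cdots X_r\mid j_1,\ldots,j_r\rangle$ to equal the $e_{k_1}\ot\cdots\ot e_{k_r}$-coefficient of $\Pi_r(e_{j_1}\ot\cdots\ot e_{j_r})$ for every such $K$.
\end{itemize}
With the convention $(X_1\cdots X_r)^{k_1,\ldots,k_r}_{j_1,\ldots,j_r}=x_{k_1j_1}\cdots x_{k_rj_r}$ used in the proof of Proposition~\ref{imm-char}, this amounts to $\gamma(K,J)\equiv\sum_{a<b}\bar k_b(\bar k_a+\bar j_a)\pmod 2$. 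It does hold: with $N=\sum_a\bar j_a=\sum_a\bar k_a$, the difference reduces modulo $2$ to $2N^2$.

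On the diagonal the two coefficients agree with no extra sign. The factor $(-1)^{\bar J}$ is just the supertrace sign, present on both sides. This same within-weight-space identification is also all you need to transfer the $\check R$-equivariance of $X_1\cdots X_r$ to $(\mathcal P_{\bm\mu}\ot 1)\circ\Pi_r\circ\mathcal P_{\bm\mu}$.
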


Given any weight $\bm\mu=(\bm\mu_1,\ldots,\bm\mu_{m+n})$, which can be written as a multiset $I=(1^{\mu_1} ,2^{\mu_2} ,\ldots ,(m+n)^{\mu_{m+n}})=(i_1, \ldots, i_r)$. We define  the following map
\beq
\bal
\theta_{\bm\mu}: SYT(\lambda) \rightarrow YT(\lambda)\\
\eal
\eeq
which maps any standard Young tableau $\Tc$ to a Young tableau $\theta_{\bm\mu}(\Tc)$ that replaces each node $k$ in $\Tc$  by $i_k$ for $1 \leq k \leq r$. It is notable that $SSYT(\lambda)$ is contained in $YT(\lambda)$.

The theorem \ref{q-super-Kostant-thm} can follow from the following lemma, which gives
an explicit correspondence between the Gelfand-Tsetlin type bases of the  covariant representations of $U_q(\gl_{m|n})$ and
Young's orthonormal basis of  irreducible representations of $\Hc_r$.
\begin{lemma}\label{schur-weyl-corr}
Let $\lambda \vdash r$ and $\bm\mu$ be a weak $(m+n)$-composition of $r$ written as the multiset  $I=(1^{\mu_1},2^{\mu_2},\ldots,(m+n)^{\mu_{m+n}})=(i_1 \leq \ldots \leq i_r)$ of $[m+n]$, let  $\mathcal{E}_{\Tc}^{\lambda}$ be the primitive idempotent   of $\Hc_r$ associated with ${\Tc}$. Then as a $ U_q(\mathfrak{gl}_{m|n}) \times \Hc_r $-module
\begin{align}\label{schur-weyl-iden1}
\sqrt{\frac{c_{\lambda}}{\alpha_{\mathrm{q}^2}(I) }}\mathcal{E}_{\Tc}^{\lambda}\cdot e_{i_1}\ot \cdots \ot e_{i_r}=\left\{\begin{array}{cc}
c\cdot \zeta_{\theta_{\bm\mu}(\Tc)}\ot v_{\Tc}&\text{ if } \theta_{\bm\mu}(\Tc)  \in SSYT(\lambda),\\
0&\text{ if }\theta_{\bm\mu}(\Tc)\notin  SSYT(\lambda),
\end{array}\right.
\end{align}
where $c_{\lambda}$  is the Schur element and $c$ is a nonzero constant. Explicitly,
$c=1$, if $\Tc$ is the unique pre-image of $\theta_{\bm\mu}(\Tc)$. And
$\sum_{i=1}^s||c_i||^2=1$ for $\theta_{\bm\mu}(\Tc_1)=\cdots=\theta_{\bm\mu}(\Tc_s)$ $(s>1)$.
In particular,
assume $r \leq m+n$, then
\beq\label{schur-weyl-iden2}
\sqrt{c_{\lambda}} \mathcal{E}_{\Tc}^{\lambda}\cdot e_1\ot \cdots \ot e_r=\zeta_{\Tc}\ot v_{\Tc} .
\eeq
\end{lemma}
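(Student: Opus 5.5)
We prove Lemma \ref{schur-weyl-corr} by identifying both sides inside the isotypic decomposition \eqref{Schur-Weyl-decom2}. First locate the vector. Since $\mathcal{E}_{\Tc}^{\lambda}$ lies in $\Hc_r$, it commutes with $U_q(\gl_{m|n})$ and preserves weights. Hence $w:=\mathcal{E}_{\Tc}^{\lambda}\cdot e_{i_1}\ot\cdots\ot e_{i_r}$ lies in $\big((\CC^{m|n})^{\ot r}\big)_{\bm\mu}$. In \eqref{Schur-Weyl-decom}, $\mathcal{E}_{\Tc}^{\lambda}$ acts as $0$ on every $U^{\nu}\ot V^{\nu}$ with $\nu\neq\lambda$, and as $1\ot(\text{projection onto } \CC v_{\Tc})$ on $U^{\lambda}\ot V^{\lambda}$. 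Therefore
\[
w=\sum_{\Lambda}a_{\Lambda}\,\zeta_{\Lambda}\ot v_{\Tc},
\]
where $\Lambda$ runs over semistandard supertableaux of shape $\lambda$ and weight $\bm\mu$. If $\lambda\notin H(m,n;r)$ this already gives $w=0$, as in the proof of Proposition \ref{imm-char}.

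Next we identify which $a_\Lambda$ survive, using the Jucys--Murphy elements and the Gelfand--Tsetlin chain. By \eqref{JM_rela}, $y_k w=q^{2c_k(\Tc)}w$ for all $k$. On the tensor side, the subspace spanned by vectors whose entries equal $i_1,\ldots,i_k$ in the first $k$ factors is compatible with the subalgebra chain $U_q(\gl_{1})\subset\cdots\subset U_q(\gl_{m|n})$. Proceeding by induction on the largest letter $p=i_r$, we restrict to the first $k=\mu_1+\cdots+\mu_{p-1}$ tensor factors. The Hecke/JM eigenvalues of $y_1,\dots,y_k$ are fixed by $\Tc|_{\le k}$, whose shape must then be the shape $\lambda^{(p-1)}$ of row $p-1$ of $\Lambda$, namely the boxes filled with entries $\le p-1$. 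Equivalently, the central Berezinians $\Ber_k(u)$ in \eqref{central-char} act by scalars determined by the nested shapes. Thus $\Lambda$ is forced to be $\theta_{\bm\mu}(\Tc)$, which gives the vanishing statement when $\theta_{\bm\mu}(\Tc)\notin SSYT(\lambda)$. For the nonvanishing case, Lemma \ref{idem-lemma} lets us show that at most one pattern survives. Whenever $i_a=i_{a+1}$, the tensor $e_{i_1}\ot\cdots\ot e_{i_r}$ is an eigenvector of $T_a$ with eigenvalue $q'_{i_a}$. Then $\mathcal{E}_{\Tc}(T_a-q^{d_a}/[d_a])=\mathcal{E}_{\Tc}T_a\mathcal{E}_{s_a\Tc}$ forces $d_a(\Tc)=\pm1$, which is row adjacency for even letters and column adjacency for odd letters, unless $w=0$. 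This is exactly conditions (a)--(c) defining $SSYT(\lambda)$.

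Then we compute the constant. Using Lemma \ref{adjoint-op} and $\mathcal{E}_{\Tc}^*=\mathcal{E}_{\Tc}=\mathcal{E}_{\Tc}^2$, we get $\|w\|^2=\langle e_I,\mathcal{E}_{\Tc}e_I\rangle$. Expanding with \eqref{e:idem} gives
\[
\|w\|^2=\frac1{c_\lambda}\sum_{\si}\langle T_{\si^{-1}}v_{\Tc},v_{\Tc}\rangle\langle e_I,\check R_\si e_I\rangle .
\]
Only $\si\in\Sym_I$ contribute, each with weight $(q'_{i})^{l(\si)}$. On $v_{\Tc}$ with $\theta_{\bm\mu}(\Tc)$ semistandard, the parabolic $\Hc_{\Sym_I}$ acts through the one-dimensional representation with $T_a\mapsto q'_{i_a}$ (trivial or sign type). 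The sum is therefore $\sum_{\tau\in\Sym_I}(q_i^{2})^{l(\tau)}=\alpha_{\mathrm q^2}(I)$. Hence $\|\sqrt{c_\lambda/\alpha_{\mathrm q^2}(I)}\,w\|=1$, and since the $\zeta_\Lambda\ot v_{\Tc}$ are orthonormal, $|c|=1$; the normalizations of $\zeta$ are chosen to absorb the phase. If several $\Tc_j$ share the same image, they are the standard tableaux differing only inside the blocks of equal letters, and the same computation distributes unit norm as $\sum\|c_i\|^2=1$. When $r\le m+n$ and $I=(1,\ldots,r)$, $\theta_{\bm\mu}$ is the identity, which gives \eqref{schur-weyl-iden2}.

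The main obstacle is the second step: matching the Hecke-side data (contents of $\Tc$) with the $U_q(\gl_{m|n})$ Gelfand--Tsetlin labels $l_{ki}$ of \eqref{eq:lki} in the super region, where odd letters fill columns. I would first try to do this via the branching $U_q(\gl_{m|k-m})\subset U_q(\gl_{m|k-m+1})$ together with \eqref{central-char}.
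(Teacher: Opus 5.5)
The paper states Lemma~\ref{schur-weyl-corr} without proof; it only says the construction follows the approach of \cite{JLZ1}. So your argument has to stand on its own, and it does not: the two steps that carry the content rest on a false claim. Your first step is correct: $w=\mathcal{E}_{\Tc}^{\lambda}e_I$ lies in $U^{\lambda}_{\bm\mu}\ot v_{\Tc}$, and $w=0$ when $\lambda\notin H(m,n;r)$.

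\textbf{The false claim.} Lemma~\ref{idem-lemma} constrains $w$ only when $d_a(\Tc)=\pm1$. In that case $s_a\Tc$ is not standard, so $\mathcal{E}_{\Tc}T_a=\frac{q^{d_a}}{[d_a]_q}\mathcal{E}_{\Tc}$. Comparing with $T_ae_I=q'_{i_a}e_I$ kills $w$ when $a,a+1$ are row-adjacent with an odd letter, or column-adjacent with an even letter. For $|d_a(\Tc)|\ge 2$, the vector $\mathcal{E}_{\Tc}T_a\mathcal{E}_{s_a\Tc}e_I$ need not vanish, so nothing is forced. Moreover, ``$d_a=\pm1$ inside every block'' is much stronger than conditions (b), (c), which only ask for horizontal (resp.\ vertical) strips.

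\textbf{Counterexample.} Take $m\ge 2$, $I=(1,2,2)$, $\lambda=(2,1)$. Let $\Tc_1$ have rows $(1,2),(3)$ and $\Tc_2$ have rows $(1,3),(2)$.
\begin{itemize}
\item Both map to the semistandard tableau with rows $(1,2),(2)$, while $d_2(\Tc_1)=-2$ and $d_2(\Tc_2)=2$.
\item At $q=1$, $\langle e_I,\mathcal{E}_{\Tc_j}e_I\rangle=\frac13\bigl(1+1/d_2(\Tc_j)\bigr)$, which equals $\frac16$ and $\frac12$.
\item Hence both vectors are nonzero, with $c_1^2=\tfrac14$ and $c_2^2=\tfrac34$. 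Your test would kill both.
\end{itemize}
Conversely, the test misses genuine vanishing. For $I=(1,1,1)$ and $\Tc$ with rows $(1,2),(3)$ it imposes nothing, yet $w=0$.

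\textbf{Consequence for the norm step.} $\Hc_{\Sym_I}$ acts on $v_{\Tc}$ by $T_a\mapsto q'_{i_a}$ only when each block fills a single row segment (even letter) or column segment (odd letter) in order. That is exactly the case where $\Tc$ is the unique preimage of $\theta_{\bm\mu}(\Tc)$. There your computation correctly gives $\langle w,w\rangle=\alpha_{\mathrm q^2}(I)/c_\lambda$. For $s>1$, however, it would give every $c_i$ norm $1$, contradicting $\sum\|c_i\|^2=1$, as the example shows.

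\textbf{How to identify the pattern.} This is the step you flag as the obstacle. Eigenvalues of the $y_k$ alone do not settle it. The Berezinian route would also need the scalars in \eqref{central-char} to separate the covariant modules that occur, which is delicate in the atypical case. The following works instead.
\begin{itemize}
\item For each $p$, put $k_p=\mu_1+\cdots+\mu_p$. Let $W_p$ be spanned by the vectors $e_{j_1}\ot\cdots\ot e_{j_{k_p}}\ot e_{i_{k_p+1}}\ot\cdots\ot e_{i_r}$ with all $j_a\le p$.
\item $W_p$ is stable under $\Hc_{k_p}$ and under $U_q(\gl_{m|p-m})$, generated by $E_i,F_i$ ($i<p$) and $q^{\epsilon_j}$ ($j\le p$); read $U_q(\gl_p)$ if $p\le m$. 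This subalgebra acts on the first $k_p$ factors only.
\item By the Jucys--Murphy recursion, $\mathcal{E}_{\Tc}=\mathcal{E}_{\Tc}\mathcal{E}_{\Tc|_{\le k_p}}$, so $w=\mathcal{E}_{\Tc}(\mathcal{E}_{\Tc|_{\le k_p}}e_I)$.
\item By Schur--Weyl duality at level $k_p$, the inner vector lies in the isotypic component of type $\nu^{(p)}=\mathrm{sh}(\Tc|_{\le k_p})$, and is zero if no such component exists.
\item The outer $\mathcal{E}_{\Tc}$ commutes with $U_q(\gl_{m|p-m})$, so $w$ stays in that component.
\end{itemize}
Multiplicity-free branching then gives $w\in\CC\,\zeta_\Lambda\ot v_{\Tc}$, where $\Lambda$ is the chain $(\nu^{(p)})_p$, i.e.\ $\Lambda=\theta_{\bm\mu}(\Tc)$. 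It also gives $w=0$ unless every $\nu^{(p)}/\nu^{(p-1)}$ is a strip of the required kind, i.e.\ unless $\theta_{\bm\mu}(\Tc)\in SSYT(\lambda)$.

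\textbf{The constants for $s>1$.} Sum over the fibre. The span $V_\Lambda$ of $v_{\Tc_1},\dots,v_{\Tc_s}$ is $\Hc_{\Sym_I}$-stable and is a product of skew modules of strips. These skew modules are induced from one-dimensional characters of parabolic subalgebras. Hence $V_\Lambda$ contains $\chi_0:T_a\mapsto q'_{i_a}$ exactly once, and
\[
\sum_{j=1}^{s}\langle e_I,\mathcal{E}_{\Tc_j}e_I\rangle=\frac{1}{c_\lambda}\sum_{\tau\in\Sym_I}\chi_0(T_\tau)\,\mathrm{tr}_{V_\Lambda}(T_{\tau^{-1}})=\frac{\alpha_{\mathrm q^2}(I)}{c_\lambda}.
\]
This is the statement $\sum_i\|c_i\|^2=1$.
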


\begin{proof}[\textbf{Proof of theorem \ref{q-super-Kostant-thm}}]
By the definition of quantum super immanants,
the left side of \eqref{q-super-Kostant-iden} equals to
\beq
\bal
 \frac{\Imm_{\chi_q^{\lambda}}(X_I)}{\alpha_{\mathrm{q}^2}(I) }&=\frac{(-1)^{\bar I}}{\alpha_{\mathrm{q}^2}(I) }\sum_{\si\in \Sym_r}\langle i_{1},\dots ,i_{r}\mid  \chi_q^{\lambda} X_1\cdots X_{r} \mid i_{1},\dots ,i_{r}\rangle\\
&=\frac{(-1)^{\bar I}}{\alpha_{\mathrm{q}^2}(I) }(\chi_q^{\lambda}X_1\cdots X_{r})_{i_{1},\ldots,i_{r}}^{i_{1},\ldots,i_{r}}\\
&=\frac{(-1)^{\bar I}}{\alpha_{\mathrm{q}^2}(I) }\sum_{\tau\in \Sym_r} \chi_q^{\lambda}(\Hc_{\tau}) (\check{R}_{\tau})_{i_{\tau_1},\ldots,i_{\tau_r}}^{i_{1},\ldots,i_{r}} x_{i_{\tau_1},i_1} \cdots x_{i_{\tau_r},i_r}.
\eal
\eeq

According to Lemma \ref{schur-weyl-corr},   the right side of \eqref{q-super-Kostant-iden} equals to
\begin{align*}
&str(\mathcal P_{\bm\mu}\ot 1)\circ\Pi_r\circ \mathcal P_{\bm\mu}|_{L(\bm\lambda)}\\
&=(-1)^{\bar I}\langle  \sqrt{\frac{c_{\lambda}}{\alpha_{\mathrm{q}^2}(I) }}(\mathcal P_{\bm\mu}\ot 1) \circ \Pi_r \circ\sum_{\Tc}\mathcal{E}_{\Tc}\cdot e_{i_1}\ot \cdots \ot e_{i_r}, \\
& \quad \sqrt{\frac{c_{\lambda}}{\alpha_{\mathrm{q}^2}(I) }}\sum_{\Tc}\mathcal{E}_{\Tc}\cdot e_{i_1}\ot \cdots \ot e_{i_r}\rangle\\
&=\frac{(-1)^{\bar I}c_{\lambda}}{\alpha_{\mathrm{q}^2}(I)}\langle  (\mathcal P_{\bm\mu}\ot 1) \circ \Pi_r \circ\sum_{\Tc}\mathcal{E}_{\Tc}\cdot e_{i_1}\ot \cdots \ot e_{i_r}, e_{i_1}\ot \cdots \ot e_{i_r}\rangle.
\end{align*}

By the relation of characters and primitive idempotents, one has that
\beq
\bal
&\frac{(-1)^{\bar I}c_{\lambda}}{\alpha_{\mathrm{q}^2}(I)} (\mathcal P_{\bm\mu}\ot 1) \circ  \Pi_r \circ\sum_{\Tc}\mathcal{E}_{\Tc}\cdot e_{i_1}\ot \cdots \ot e_{i_r}\\
&=\frac{(-1)^{\bar I}}{\alpha_{\mathrm{q}^2}(I)} (\mathcal P_{\bm\mu}\ot 1)\circ \Pi_r \circ\sum_{\si\in \Sym_r} \chi_q^{\lambda}(\Hc_{\si}) \check{R}_{\si} \cdot e_{i_1}\ot \cdots \ot e_{i_r}\\
&=\frac{(-1)^{\bar I}}{\alpha_{\mathrm{q}^2}(I)} \sum_{\si\in \Sym_r} \chi_q^{\lambda}(\Hc_{\si}) \check{R}_{\si}\circ (\mathcal P_{\bm\mu}\ot 1)\sum_{(j_1,\ldots,j_r)} (-1)^{\sum\limits_{a<b}\bar j_a(\bar i_b+\bar j_b)} x_{j_1,i_1}\cdots x_{j_r,i_r}\\
&\quad \ot e_{j_1}\ot \cdots \ot e_{j_r}\\
&=\frac{(-1)^{\bar I}}{\alpha_{\mathrm{q}^2}(I)} \sum_{\si,\tau\in \Sym_r} (-1)^{\sum\limits_{a<b}\bar i_{\tau(a)}(\bar i_b+\bar i_{\tau(b)})} \chi_q^{\lambda}(\Hc_{\si})  x_{i_{\tau(1)},i_1}\cdots x_{i_{\tau(r)},i_r}\\
&\quad \ot \check{R}_{\si}\cdot e_{i_{\tau(1)}}\ot \cdots \ot e_{i_{\tau(r)}}.
\eal
\eeq
So taking the  bilinear form with $e_{i_1}\ot \cdots \ot e_{i_r}$, these terms in the summation of the above equation are zero unless $\si=\tau$. Then
%\beq
%\bal
%&str(\mathcal P_{\bm\mu}\ot 1)\circ \Pi_r\circ \mathcal P_{\bm\mu}|_{L(\bm\lambda)}\\
%&=\frac{(-1)^{\bar I}}{\alpha_{\widetilde{q}_I^2}(I)}  \sum_{\tau\in \Sym_r} (-1)^{\overline{\tau^{-1}(I)}_{\tau} +\sum\limits_{a<b}\bar i_{\tau(a)}(\bar i_b+\bar i_{\tau(b)})} \chi_q^{\lambda}(\Hc_{\tau})  x_{i_{\tau(1)},i_1}\cdots x_{i_{\tau(r)},i_r}.
%\eal
%\eeq
%Since
%\ben
%\bal
%& P_{\tau}\cdot e_{i_{\tau(1)}}\ot \cdots \ot e_{i_{\tau(r)}}\\
%&=(-1)^{\overline{\tau^{-1}(I)}_{\tau}}e_{i_1}\ot \cdots \ot e_{i_r}\\
%&=(-1)^{\sum\limits_{a<b}\bar i_{\tau(a)}(\bar i_b+\bar i_{\tau(b)})}(P_{\tau})_{i_{\tau(1)},\ldots,i_{\tau(r)}}^{i_{1},\ldots,i_{r}}
%e_{i_1}\ot \cdots \ot e_{i_r},
%\eal
%\een
%we obtain that
\beq
\bal
&str(\mathcal P_{\bm\mu}\ot 1)\circ\Pi_r\circ \mathcal P_{\bm\mu}|_{L(\bm\lambda)}\\
&=\frac{(-1)^{\bar I}}{\alpha_{\mathrm{q}^2}(I)} \sum_{\tau\in \Sym_r} \chi_q^{\lambda}(\Hc_{\tau}) (\check{R}_{\tau})_{i_{\tau_1},\ldots,i_{\tau_r}}^{i_{1},\ldots,i_{r}} x_{i_{\tau_1},i_1} \cdots x_{i_{\tau_r},i_r}\\
&=\frac{\Imm_{\chi_q^{\lambda}}(X_I)}{\alpha_{\mathrm{q}^2}(I)}.
\eal
\eeq
\end{proof}

\section{Quantum super Littlewood correspondences}
We establish the Littlewood correspondences between quantum super immanants and  Schur supersymmetric polynomials. As special cases, we obtain the super case $(q\rightarrow 1)$, and the quantum case $(n=0)$.

\subsection{Quantum super Littlewood correspondences I and II}
We have a quantum super version of the Littlewood correspondences I and II \cite{Li}[p. 118, p. 120].
\begin{theorem}\label{LittlewoodI}
   Corresponding to any relation between Schur supersymmetric polynomials of total order $m+n$, we may replace each Schur supersymmetric polynomial by the corresponding quantum super immanant of complementary principal minors of generator matrix of $A_q(\Mat_{m|n})$ provided that every product is summed for all sequences of pairwise disjoint subsets of $[m+n]$.
\end{theorem}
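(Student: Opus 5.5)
Our plan is to define a linear map from the space spanned by products of Schur supersymmetric polynomials of total degree $m+n$ into $A_q(\Mat_{m|n})$, and to show that it is a well-defined homomorphism of the relevant graded structures. Concretely, for partitions $\lambda^{(1)},\dots,\lambda^{(k)}$ with $\sum_a|\lambda^{(a)}|=m+n$, we send the product $\prod_a s_{\lambda^{(a)}}$ to
\[
\sum_{(I_1,\dots,I_k)} \Imm_{\chi_q^{\lambda^{(1)}}}(X_{I_1})\cdots \Imm_{\chi_q^{\lambda^{(k)}}}(X_{I_k}),
\]
where the sum runs over ordered sequences of pairwise disjoint subsets $I_a\subset[m+n]$ with $|I_a|=|\lambda^{(a)}|$. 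Each $I_a$ is then a genuine subset, so $\alpha(I_a)=\alpha_{\mathrm q^2}(I_a)=1$ and no normalizations intervene. The statement will follow once we show that this assignment respects linear relations among the products.

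The first step is a multiplicativity rule on the Hecke side. By the definition \eqref{imm-def} together with \eqref{ch formula}, the immanant $\Imm_{\chi_q^{\lambda}}(X_I)$ is a matrix coefficient of $\chi_q^{\lambda}X_1\cdots X_r$. For disjoint subsets, we would show that the product of immanants equals a matrix coefficient of $\bigl(\chi_q^{\lambda^{(1)}}\ot\cdots\ot\chi_q^{\lambda^{(k)}}\bigr)X_1\cdots X_{m+n}$ on the full index set $I_1\sqcup\cdots\sqcup I_k$, with the character sitting in the parabolic subalgebra $\Hc_{\mu}$, $\mu=(|\lambda^{(1)}|,\dots)$. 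This uses the tensor structure $X_1\cdots X_r\ot X_{r+1}\cdots X_{r+s}$ and the fact that the sign $(-1)^{\bar I}$ is additive.

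The second step sums this over all ordered disjoint decompositions, that is, over all ways of distributing $[m+n]$ into blocks. We then use $\check{R}_\si$ with $\si$ a minimal coset representative to move indices; by \eqref{PR}, distinct indices are permuted with at most a sign. Conjugating by $\check{R}_\si$, as in the proof of Proposition~\ref{imm-char}, turns the sum over $\si\in\mathcal M(\Sym_{m+n}/\Sym_\mu)$ of $\check{R}_\si\,(\chi^{(1)}\ot\cdots)\,\check{R}_{\si^{-1}}$ into the character of the induced module $\mathrm{Ind}\bigl(V^{\lambda^{(1)}}\ot\cdots\ot V^{\lambda^{(k)}}\bigr)$. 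Its value on the index set $(1,\dots,m+n)$ is therefore $\Imm_{\chi^{\mathrm{Ind}}}(X)$, the immanant of the full matrix with respect to the induced character.

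The third step is to decompose the induced module by the Littlewood--Richardson rule, which holds verbatim for $\Hc_r$ at generic $q$, and to use linearity of $\Imm_{\chi^V}$ in $V$. Consequently the image of $\prod_a s_{\lambda^{(a)}}$ is $\sum_\nu c^{\nu}_{\lambda^{(1)}\cdots\lambda^{(k)}}\Imm_{\chi_q^\nu}(X)$. The same coefficients appear in $\prod_a s_{\lambda^{(a)}}=\sum_\nu c^\nu s_\nu$ for supersymmetric Schur functions. A relation $\sum \text{coef}\cdot\prod s=0$ among supersymmetric Schur functions of degree $m+n$ yields $\sum_\nu (\cdots)s_\nu=0$, and hence $\sum_\nu(\cdots)\,\Imm_{\chi_q^\nu}(X)=0$.

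The main obstacle lies here. The supersymmetric Schur functions $s_\nu$ with $\nu\notin H(m,n)$ vanish, so the $s_\nu$ are linearly independent only for $\nu\in H(m,n;m+n)$. We must therefore verify that $\Imm_{\chi_q^\nu}(X)=0$ whenever $\nu\notin H(m,n;m+n)$. This is exactly the vanishing statement of Proposition~\ref{imm-char}, applied with $I=(1,\dots,m+n)$, which handles the kernel. With this in hand, both sides are images of the ring of symmetric functions modulo the same ideal spanned by $s_\nu$ for $\nu\notin H(m,n)$, and the correspondence is well defined. Secondary care is needed for the super signs in the first step, which should cancel as in the computation of the proof of Proposition~\ref{imm-char}.
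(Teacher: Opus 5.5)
Your proposal is correct and follows essentially the paper's route: the paper proves the multiset version (Theorem \ref{LittlewoodII}) by matching the coset-decomposed sum for the induced character $\sum_{\sigma}\check{R}_{\sigma}\mathcal{E}^{\mu}\mathcal{E}^{\nu}\check{R}_{\sigma^{-1}}$ with the sum of products of immanants over disjoint index sets and then expanding via Littlewood--Richardson coefficients, and it obtains Theorem \ref{LittlewoodI} as the special case $I=(1,\ldots,m+n)$. Your explicit appeal to the vanishing in Proposition \ref{imm-char} for $\nu\notin H(m,n)$ spells out what the paper uses only implicitly when it restricts the decomposition to $\lambda\in H(m,n;k)$.
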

In general,
\begin{theorem}\label{LittlewoodII}
   Corresponding to any relation between Schur supersymmetric polynomials, we may replace each Schur supersymmetric polynomial by the corresponding (normalized) quantum super immanant of a principal minor of generator matrix of $A_q(\Mat_{m|n})$ provided that we sum for all principal minors of the appropriate order with non-decreasing ordered multisets of $[m+n]$.
\end{theorem}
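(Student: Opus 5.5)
The plan is to reduce the quantum statement to the classical identity for supersymmetric polynomials, via the weight-space interpretation in Theorem \ref{q-super-Kostant-thm}. We work with Theorem \ref{LittlewoodII}, of which Theorem \ref{LittlewoodI} is the multiplicity-free special case (all multisets are sets).

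\textbf{Step 1: a generating element.} Define
\[
\Phi_\lambda:=\sum_{I}\frac{\Imm_{\chi_q^\lambda}(X_I)}{\alpha_{\mathrm q^2}(I)},
\]
where the sum runs over all non-decreasing multisets $I$ of $[m+n]$ of size $r=|\lambda|$.

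\textbf{Step 2: $\Phi_\lambda$ is a supertrace on $L(\bm\lambda)$.} By Theorem \ref{q-super-Kostant-thm}, each summand equals the supertrace of $\Pi_r$ restricted to the weight space $L(\bm\lambda)_{\bm\mu}$. Summing over all weak compositions $\bm\mu$ gives
\[
\Phi_\lambda=\mathrm{str}\,\Pi_r|_{L(\bm\lambda)},
\]
the supercharacter of the comodule $L(\bm\lambda)$ evaluated in $A_q(\Mat_{m|n})$. In particular $\Phi_\lambda=0$ for $\lambda\notin H(m,n)$, by Proposition \ref{imm-char}. This matches the vanishing of the Schur supersymmetric polynomial $s_\lambda(x/y)$ in $m|n$ variables.

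\textbf{Step 3: the map $s_\lambda\mapsto\Phi_\lambda$ is a ring homomorphism.} Supercharacters of comodules are multiplicative, and the supertrace is compatible with direct sums. The super Schur--Weyl decomposition \eqref{Schur-Weyl-decom} and its analogue for $U^\lambda\ot U^\mu\subset(\CC^{m|n})^{\ot(r+s)}$ then give
\[
\Phi_\lambda\Phi_\mu=\sum_\nu c^\nu_{\lambda\mu}\Phi_\nu,
\]
with the ordinary Littlewood--Richardson coefficients. These are exactly the structure constants of the $s_\lambda(x/y)$, $\lambda\in H(m,n)$. The covariant modules and the $s_\lambda(x/y)$ satisfy the same relations, and the Schur supersymmetric polynomials span the ring of supersymmetric polynomials with all linear relations generated by $s_\lambda=0$ for $\lambda\notin H(m,n)$. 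Hence the assignment $s_\lambda(x/y)\mapsto\Phi_\lambda$ extends to a well-defined algebra homomorphism from the ring of supersymmetric polynomials into the subalgebra of $A_q(\Mat_{m|n})$ generated by the matrix coefficients of tensor comodules. Commutativity of the image follows because $\mathrm{str}$ of a tensor product is symmetric, using $\check R$ as the intertwiner.

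\textbf{Step 4: expanding products gives the prescribed sums.} Apply the homomorphism to a given polynomial relation. Each monomial $s_{\lambda^{(1)}}\cdots s_{\lambda^{(k)}}$ becomes $\Phi_{\lambda^{(1)}}\cdots\Phi_{\lambda^{(k)}}$. Expanding each factor via Step 1 produces the sum over all principal minors of the appropriate orders with non-decreasing multisets, normalized by $\alpha_{\mathrm q^2}(I)$. This is exactly the replacement stated in Theorem \ref{LittlewoodII}.

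\textbf{Step 5: the set case, Theorem \ref{LittlewoodI}.} Restrict to total order $m+n$ and to pairwise disjoint subsets, using Lemma \ref{schur-weyl-corr} with $r\le m+n$. Extract the component of multidegree $(1,\dots,1)$, the multilinear part, in the weight grading on both sides. On the polynomial side this is a relation between coefficients of $x_1\cdots y_n$. On the immanant side it retains only those product terms whose index sets are disjoint and cover $[m+n]$.

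\textbf{Main obstacle.} The hardest part is Step 3 in the quantum super setting. Two things have to be checked:
\begin{itemize}
\item that the supertrace of $\Pi_{r+s}$ on $U^\lambda\ot U^\mu$ genuinely factors as the product $\Phi_\lambda\Phi_\mu$ in the noncommutative algebra $A_q(\Mat_{m|n})$, with the sign conventions of $\Pi_r$ matching $\gamma(I,J)$;
\item that products of the $\Phi$'s commute.
\end{itemize}
For the first, I would use the comodule tensor structure $\Pi_{r+s}=(\Pi_r\ot\Pi_s)$ followed by multiplication, and compute the diagonal matrix coefficients directly. For commutativity, I would argue through $\check R X_1X_2=X_1X_2\check R$ together with the cyclicity of the supertrace.
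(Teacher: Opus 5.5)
Your argument is correct in outline, but it follows a different route from the paper's.

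The paper never uses $U_q(\gl_{m|n})$-modules. It writes the induced Hecke character as $\sum_{\sigma}\check R_{\sigma}\mathcal E^{\mu}\mathcal E^{\nu}\check R_{\sigma^{-1}}=\sum_{\lambda}c^{\lambda}_{\mu\nu}\chi^{\lambda}_q$ and inserts it into Proposition \ref{imm-char} for a \emph{fixed} multiset $I$. It then splits the sum over $\mathfrak S_k$ along cosets of $\mathfrak S_{|\mu|}\times\mathfrak S_{|\nu|}$. Each coset corresponds to a splitting of $I$ into $(I_1,I_2)$, and the diagonal coefficient factors as $\Imm_{\chi^{\mu}_q}(X_{I_1})\Imm_{\chi^{\nu}_q}(X_{I_2})$. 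This gives the refined identity for each $I$ directly, so Theorem \ref{LittlewoodI} ($I=[m+n]$) and Theorem \ref{LittlewoodII} (sum over $I$) come out together.

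You instead identify $\Phi_\lambda$ with a supercharacter. You get the summed identity $\Phi_\lambda\Phi_\mu=\sum_\nu c^{\nu}_{\lambda\mu}\Phi_\nu$ from multiplicativity plus the tensor product decomposition. This is the same Littlewood--Richardson input, now entering through restriction from $\Hc_{r+s}$ to $\Hc_r\ot\Hc_s$. Combined with $\Phi_\lambda=0$ for $\lambda\notin H(m,n)$ and the linear independence of the $s_\lambda(x/y)$, $\lambda\in H(m,n)$, it gives an honest algebra homomorphism. That makes ``any relation'' precise, whereas the paper treats a single product of two Schur functions and leaves the general case implicit.

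Three points would simplify your write-up.
\begin{enumerate}
\item You do not need Theorem \ref{q-super-Kostant-thm}, whose proof rests on Lemma \ref{schur-weyl-corr}, which is stated without proof. Summing Proposition \ref{imm-char} over all non-decreasing $I$ already gives $\Phi_\lambda=str_{1,\ldots,r}(\mathcal E^{\lambda}_{\mathcal T}X_1\cdots X_r)$. This is the supercharacter of the subcomodule $\mathcal E^{\lambda}_{\mathcal T}(\CC^{m|n})^{\ot r}$, because $\check R X_1X_2=X_1X_2\check R$.
\item The commutativity you list as an obstacle is automatic from $c^{\nu}_{\lambda\mu}=c^{\nu}_{\mu\lambda}$ once multiplicativity holds. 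Also, no parity shift can spoil the supertraces, since the isomorphisms in the decomposition are given by even elements of $\Hc_{r+s}$.
\item In Step 5 nothing is extracted ``on the polynomial side'', and Lemma \ref{schur-weyl-corr} plays no role. The per-$I$ identities, in particular Theorem \ref{LittlewoodI}, are just the components of row/column bidegree $(I,I)$ of your summed identity. This is legitimate because the defining relations of $A_q(\Mat_{m|n})$ are bihomogeneous and each $\Imm_{\chi}(X_{I})$ lies in that component.
\end{enumerate}
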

We only prove Theorem \ref{LittlewoodII}, Theorem \ref{LittlewoodI} can be viewed as a special case of  Theorem \ref{LittlewoodII}.
\begin{proof}[Proof of Theorem \ref{LittlewoodII} ]
 We consider the induced representation of tensor product of any irreducible representations $V^{\mu}$ and $V^{\nu}$, suppose that $|\mu|+|\nu|=k$, the character can be decomposed as
\begin{align*}
\chi_q^{\mathrm{Ind}(V^{\mu}\ot V^{\nu})}=\sum_{\si \in \mathfrak{S}_{k}}\check{R}_{\si}\mathcal{E}^{\mu}\mathcal{E}^{\nu}\check{R}_{\si^{-1}}=\sum_{\lambda\in H(m,n;k)}c_{\mu\nu}^{\lambda}\chi_q^{\lambda},
\end{align*}
where these coefficients $c_{\mu\nu}^{\lambda}$ are the Littlewood-Richardson coefficients.
Hence it follows from proposition \ref{imm-char} that  for any non-decreasing ordered multisets $I=(1\leq i_1\leq \ldots \leq i_{k} \leq m+n)$
\begin{equation}
\begin{aligned}
&\Imm_{\chi_q^{\mathrm{Ind}(V^{\mu}\ot V^{\nu})}}(X_I) \\
   &=(-1)^{\bar I}\langle i_1,\ldots ,i_k
\mid \chi_q^{\mathrm{Ind} (V^{\mu}\ot V^{\nu})} X_1\cdots X_{k} \mid i_1,\ldots ,i_k\rangle\\
   &=\frac{(-1)^{\bar I}\alpha_{\mathrm q^2}(I)}{\alpha( {I})}\sum_{\sigma\in\mathfrak{S}_k}\langle i_{\sigma_1},\dots ,i_{\sigma_k}\mid \mathcal{E}^{\mu} \mathcal{E}^{\nu}X_1\cdots X_k \mid i_{\sigma_1},\dots,i_{\sigma_k}\rangle.
\end{aligned}
\end{equation}

For each sequence $(I_1,I_2)$ of  non-decreasing ordered multisets of $[m+n]$ satisfying $|I_1|=|\mu|, \ |I_2|=|\nu|$ and the disjoint union of $I_1$ and $I_2$ is $I$,  there exists the minimal length coset representative elements $\si \in \mathcal M(\mathfrak{S}_k/\Sym_{|\mu|}\times \Sym_{|\nu|})$ such that $\si^{-1}(I)= (I_1,I_2)$.
Hence by proposition  \ref{imm-char}  we have that
\begin{equation*}
\begin{aligned}
&\Imm_{\chi_q^{\mathrm{Ind}(V^{\mu}\ot V^{\nu})}}(X_I) \\
&=\frac{(-1)^{\bar I}\alpha_{\mathrm q^2}(I)}{\alpha( {I})}\sum_{ \tau \in \Sym_{|\mu|}\times \Sym_{|\nu|}, \atop \si \in \mathcal M(\mathfrak{S}_k/\Sym_{|\mu|}\times \Sym_{|\nu|}) }\langle i_{\tau_{\si_1}},\dots ,i_{\tau_{\si_k}}\mid \mathcal{E}^{\mu} \mathcal{E}^{\nu} X_1 \cdots X_k \mid i_{\tau_{\si_1}},\dots ,i_{\tau_{\si_k}}\rangle\\
&=\sum_{(I_1,I_2)}\frac{\alpha_{\mathrm q^2}(I)}{\alpha_{\mathrm q^2}(I_1)\alpha_{\mathrm q^2}(I_2)}{\Imm}_{\chi^{\mu}}(X_{I_1}) {\Imm}_{\chi^{\nu}}(X_{I_2}),
\end{aligned}
\end{equation*}
where the sums are over all sequences $(I_1,I_2)$ of  non-decreasing ordered multisets of $[m+n]$  and the disjoint union of $I_j, \ j=1,2$ is $I$.

On the other hand,
\begin{equation}
\begin{aligned}
\Imm_{\chi_q^{\mathrm{Ind}(V^{\mu}\ot V^{\nu})}}(X_I)
   &=(-1)^{\bar I}\langle i_1,\ldots ,i_k
\mid \chi^{\mathrm{Ind}(V^{\mu}\ot V^{\nu})} X_1\cdots X_{k} \mid i_1,\ldots ,i_k \rangle\\
   &=\sum_{\lambda\in H(m,n;k)}c_{\mu\nu}^{\lambda}(-1)^{\bar I}\langle i_1,\ldots ,i_k  \mid \chi^{\lambda} X_1\cdots X_{k} \mid i_1,\ldots ,i_k \rangle\\
   &=\sum_{\lambda\in H(m,n;k)}c_{\mu\nu}^{\lambda}\Imm_{\chi^{\lambda}}(X_I).
\end{aligned}
\end{equation}
Hence,
\[
\sum_{(I_1,I_2)}\frac{\Imm_{\chi_q^{\mu}}(X_{I_1}) \Imm_{\chi_q^{\nu}}(X_{I_2})}{\alpha_{\mathrm q^2}(I_1)\alpha_{\mathrm q^2}(I_2)}  =\sum_{\lambda\in H(m,n;k)}c_{\mu\nu}^{\lambda}\frac{ \Imm_{\chi_q^{\lambda}}(X_I)}{\alpha_{\mathrm q^2}(I)}.
\]
This relation coincides with the product of the two Schur supersymmetric polynomials $\mathbb{S}_{\mu}$ and $\mathbb{S}_{\nu}$.
\end{proof}

In particular, we have general Littlewood-Merris-Watkins identities \cite{KS,MW} for the quantum supermatrices.
Let  $\psi_q^{\mu}$ (resp. $\phi_q^{\mu}$) be the induced characters of the sign character (resp. trivial character) of the parabolic subgroup $\Hc_{\mu}$ of $\Hc_k$.
%They can be decomposeed into  the irreducible $\Hc_k$-characters:
%\begin{align*}
% \psi^{\mu}=\sum_{\lambda}K_{\lambda^{T},\mu}\chi^{\lambda}, \ \ \
%\phi^{\mu}=\sum_{\lambda}K_{\lambda,\mu}\chi^{\lambda},
%\end{align*}
%where $K_{\lambda,\mu}$ are the Kostka numbers.
\begin{corollary}
   Let $I=(1\leq i_1\leq \ldots \leq i_k \leq m+n)$ be a multiset of $[m+n]$. Fix a partition $\lambda=(\lambda_1,\dots,\lambda_l)$ of $k$. Then
\begin{equation}\label{gene-qLMW}
\begin{aligned}
   &\Imm_{\psi_q^{\lambda}}(X_I)= \sum_{(I_1,\dots,I_l)}\frac{ \alpha_{\mathrm{q}^2}(I)}{ \alpha_{\mathrm{q}^2}(I_1)\cdots \alpha_{\mathrm{q}^2}(I_l)}\Imm_{\chi_q^{(1^{\lambda_1})}}(X_{I_1})\cdots \Imm_{\chi_q^{(1^{\lambda_l})}}(X_{I_l}),\\
   &\Imm_{\phi_q^{\lambda}}(X_I)=\sum_{(I_1,\dots,I_l)}\frac{ \alpha_{\mathrm{q}^2}(I)}{ \alpha_{\mathrm{q}^2}(I_1)\cdots \alpha_{\mathrm{q}^2}(I_l)}\Imm_{\chi_q^{(\lambda_1)}}(X_{I_1})\cdots \Imm_{\chi_q^{(\lambda_l)}}(X_{I_l}),
\end{aligned}
\end{equation}
where the sums are taken over all sequences $(I_1,\dots,I_l)$ of non-decreasing ordered multisets of $[m+n]$ satisfying $|I_j|=\lambda_j$ and the disjoint union of $I_j, 1 \leq j \leq l$ is $I$.
\end{corollary}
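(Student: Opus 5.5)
The plan is to obtain the corollary by iterating the argument in the proof of Theorem \ref{LittlewoodII}, this time for $l$ factors instead of two. The character $\psi_q^{\lambda}$ is the character of $\mathrm{Ind}_{\Hc_{\lambda}}^{\Hc_k}(\mathrm{sgn})$. Since $\Hc_{\lambda}\cong \Hc_{\lambda_1}\ot\cdots\ot\Hc_{\lambda_l}$, the sign representation of $\Hc_\lambda$ is the outer tensor product $V^{(1^{\lambda_1})}\ot\cdots\ot V^{(1^{\lambda_l})}$. Likewise $\phi_q^\lambda$ is the induced character of $V^{(\lambda_1)}\ot\cdots\ot V^{(\lambda_l)}$. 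Both statements are therefore the case where every factor is a one-dimensional representation, with $\mathcal{E}^{(1^{\lambda_j})}$ (respectively $\mathcal{E}^{(\lambda_j)}$) the antisymmetrizer (respectively symmetrizer) of the $j$-th block, normalized to be an idempotent.

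First I will write the induced character as
\[
\chi_q^{\mathrm{Ind}(V^{\mu^{(1)}}\ot\cdots\ot V^{\mu^{(l)}})}=\sum_{\si\in\Sym_k}\check{R}_{\si}\,\mathcal{E}^{\mu^{(1)}}\cdots\mathcal{E}^{\mu^{(l)}}\,\check{R}_{\si^{-1}}.
\]
Here $\mathcal{E}^{\mu^{(j)}}$ acts on the $j$-th block of tensor positions, and the factors pairwise commute. This is exactly the form used for $l=2$. Next I will run the computation in the proof of Proposition \ref{imm-char} with this element in place of $\mathcal{E}_{\Tc}$. Moving $\check{R}_{\si}$ past $X_1\cdots X_k$ (using $\check{R}X_1X_2=X_1X_2\check{R}$) and splitting $\si=\tau\mu$ over $\Sym_I$ and its minimal coset representatives gives
\[
\Imm(X_I)=\frac{(-1)^{\bar I}\alpha_{\mathrm q^2}(I)}{\alpha(I)}\sum_{\si\in\Sym_k}\langle i_{\si_1},\dots,i_{\si_k}\mid \mathcal{E}^{\mu^{(1)}}\cdots\mathcal{E}^{\mu^{(l)}}X_1\cdots X_k\mid i_{\si_1},\dots,i_{\si_k}\rangle .
\]

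Then I will decompose $\si\in\Sym_k$ as a minimal coset representative of $\Sym_k/\Sym_{\lambda_1}\times\cdots\times\Sym_{\lambda_l}$ times an element of the parabolic subgroup. The representatives correspond to the ordered sequences $(I_1,\dots,I_l)$ of non-decreasing multisets whose disjoint union is $I$. For a fixed representative the matrix element factorizes over the blocks, because both $\mathcal{E}^{\mu^{(1)}}\cdots\mathcal{E}^{\mu^{(l)}}$ and $X_1\cdots X_k$ are tensor products of block operators. Summing over the parabolic part then reproduces, block by block, the right-hand side of \eqref{imm-XI} for $X_{I_j}$. This yields
\[
\Imm(X_I)=\sum_{(I_1,\dots,I_l)}\frac{\alpha_{\mathrm q^2}(I)}{\alpha_{\mathrm q^2}(I_1)\cdots\alpha_{\mathrm q^2}(I_l)}\prod_j\Imm_{\chi_q^{\mu^{(j)}}}(X_{I_j}).
\]
Specializing $\mu^{(j)}=(1^{\lambda_j})$ gives the first identity, and $\mu^{(j)}=(\lambda_j)$ gives the second.

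The main obstacle is the factorization step, specifically the super signs and the multiplicity factors. I must check two things. First, the sign $(-1)^{\bar I}$ splits as $\prod_j(-1)^{\bar I_j}$, and the sign picked up from $\check{R}_{\mu^{-1}}$ acting on basis vectors cancels between the bra and the ket. Second, the ratio $\alpha(I)$ versus $\prod_j\alpha(I_j)$ combines correctly with the count of coset representatives, i.e. the number of distinct rearrangements. I would handle both by following the $l=2$ computation verbatim and arguing by induction on $l$, treating $\mathcal{E}^{\mu^{(1)}}\cdots\mathcal{E}^{\mu^{(l-1)}}$ as a single idempotent of the parabolic subalgebra. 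The final formula is independent of which idempotent of this kind is used, because it only enters through its trace.
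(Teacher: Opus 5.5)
Your proposal is correct and follows essentially the paper's route. The paper gives no separate proof of this corollary; it presents it as the $l$-block version of the computation in the proof of Theorem~\ref{LittlewoodII}, specialized to the one-dimensional sign and trivial block representations, which is exactly what you do (the sign and block-factorization checks you flag go through because the block idempotents are even and weight-preserving). One point to phrase carefully: coset representatives map many-to-one onto sequences $(I_1,\dots,I_l)$, with fibres of size $\alpha(I)/(\alpha(I_1)\cdots\alpha(I_l))$, and it is precisely this count that turns the $1/\alpha(I)$ in \eqref{imm-XI} into the blockwise normalizations.
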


\subsection{MacMahon Master Theorem}
Define $\alpha_0=\beta_0=1$, for $k<0$ $\alpha_k=\beta_k=0$ and for $k>0$,
\begin{equation}\label{genefcn1}
\begin{aligned}
  &\alpha_k=str_{1,\ldots,k}\Ec^{(1^{k})}X_1\cdots X_k=\sum_{|I|=k}\frac{\Imm_{\chi_q^{(1^{k})}}(X_I)}{\alpha_{\mathrm{q}^2}(I)},\\ &\beta_k=str_{1,\ldots,k}\Ec^{(k)}X_1\cdots X_k=\sum_{|I|=k}\frac{\Imm_{\chi_q^{(k)}}(X_I)}{\alpha_{\mathrm{q}^2}(I)},
\end{aligned}
\end{equation}
where the sums are taken over all non-decreasing multisets $I$ with $k$ elements. Note that $\{\alpha_k \mid k \in \mathbb{Z}^{+}\}$ in $A_q(\Mat_{m|n})$  pairwise commute.
The  commutative subalgebra $\mathfrak{B}_{m|n}$ of $A_q(\Mat_{m|n})$ is generated by $\{\alpha_k,\mid  k \in \mathbb{Z}^{+}\}$.

We set
\begin{align}
\lambda(t)=\sum_{k=0}^{\infty}t^k \alpha_k,\qquad
\si(t)=\sum_{k=0}^{\infty}t^k \beta_k.
\end{align}
We have the quantum super analog of MacMahon Master Theorem in \cite{JLZ2}.
\begin{theorem}\label{Mac}
$\lambda(-t)\times \si(t) =1.$
\end{theorem}
%\begin{proof}
%It is sufficient to show that
%\begin{equation}\label{sum-term}
%\sum_{r=0}^k (-1)^r str_{1,\ldots,k}\Ec^{(r)}\ot \Ec^{(1^{k-r})}X_1\cdots X_k=0,
%\end{equation}
%where the idempotent $\Ec^{(1^{k-r})}$ is over the copies of $\mathrm{End}(\mathbb{C}^{m|n})$ labeled by $\{r+1,\ldots,k\}$.
%By the proof of Proposition \ref{imm-char}, we have that
%\ben
%\bal
%&str_{1,\ldots,k}\Ec^{(r)}\ot \Ec^{(1^{k-r})}X_1\cdots X_k\\
%=&\sum_{I}\frac{(-1)^{\bar I}} {\alpha(I)}\sum_{\si \in \mathfrak{S}_k}\langle i_{\sigma_1},\dots ,i_{\sigma_k}\mid \Ec^{(r)}\ot \Ec^{(1^{k-r})}X_1\cdots X_k \mid i_{\sigma_1},\dots ,i_{\sigma_k}\rangle\\
%=&\sum_{I}\frac{(-1)^{\bar I}} {\alpha(I)}\sum_{\si \in \mathfrak{S}_k}\langle i_1,\dots ,i_k\mid P_{\si}\Ec^{(r)}\ot \Ec^{(1^{k-r})}P_{\si^{-1}}X_1\cdots X_k \mid i_1,\dots ,i_k\rangle.
%\eal
%\een
%Since equation \eqref{ind-decom}, for $1 \leq r \leq k-1$
%\[
%\sum_{\si\in \Sym_k} P_{\si}\mathcal{E}^{(r)}\mathcal{E}^{(1^{\{r+1,\ldots,k\}})}P_{\si^{-1}}=\chi^{(r+1,1^{k-r-1})}+\chi^{(r,1^{k-r})}.
%\]
%Therefore the telescoping sum \eqref{sum-term} equals zero.
%\end{proof}

In the superalgebra $\End(\CC^{m|n})^{\ot 2}$, we define
\begin{equation}\label{P}
  P^q =\sum_{i,j=1}^{m+n}q^{\ve(i-j)}(-1)^{\bar{j}} e_{ij}\ot e_{ji}  .
\end{equation}
where  the symbol $\ve$  is defined by
\ben
\ve (i-j)
= \left\{ \begin{aligned}
1\ \ &\text{ if }\ i>j,\\
0\ \ &\text{ if }\ i=j,\\
-1\ \ &\text{ if }\ i<j.\\
\end{aligned} \right.
\een
Let $Y, Z\in \Mat_{m|n}(R)$. Denote $Y*Z=str_1 P^q Y_1Z_2$.
Let $X^{[k]}$ be the $k$th power of $X$ under the multiplication $*$, i.e.
\begin{align}\label{e:power-m}
X^{[0]}=1,\   X^{[1]}=X, \ X^{[k]}=X^{[k-1]}*X,\ k>1.
\end{align}
We denote
\begin{align}
\psi(t)=\sum_{k=0}^{\infty}t^k\gamma_{k+1},
\end{align}
where $\gamma_k=strX^{[k]}$.

The following Newton identities follow from the MacMahon Master Theorem, see \cite{JLZ2}.
\begin{theorem}[Newton's identities]\label{Newton-iden}
\begin{align}
&\partial_t \lambda(-t)=-\lambda(-t) \psi(t),\\
&\partial_t \si(t)= \psi(t)\si(t).
 \end{align}
\end{theorem}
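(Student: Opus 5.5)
The plan is to derive both identities from one recursion for the $\beta_k$, and then use the MacMahon Master Theorem (Theorem~\ref{Mac}) to transfer it to $\lambda(-t)$. Comparing coefficients of $t^{k-1}$, the second identity $\partial_t\si(t)=\psi(t)\si(t)$ is equivalent to
\begin{equation*}
k\,\beta_k=\sum_{r=1}^{k}\gamma_r\,\beta_{k-r},\qquad k\geq 1 .
\end{equation*}
Once this holds, the first identity is formal. Indeed $\si(t)$ has constant term $\beta_0=1$, so it is invertible as a power series. Differentiating $\lambda(-t)\si(t)=1$ gives $(\partial_t\lambda(-t))\si(t)=-\lambda(-t)\partial_t\si(t)=-\lambda(-t)\psi(t)\si(t)$, and multiplying on the right by $\si(t)^{-1}$ yields $\partial_t\lambda(-t)=-\lambda(-t)\psi(t)$. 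So the whole proof reduces to the recursion for $\beta_k$.

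To prove the recursion I would start from $\beta_k=str_{1,\ldots,k}\Ec^{(k)}X_1\cdots X_k$ and apply the fusion recurrence for primitive idempotents to the one-row tableau. The removed box of $(k)$ has content $k-1$, and the only other addable box of $(k-1)$ has content $-1$. Hence
\begin{equation*}
\Ec^{(k)}=\Ec^{(k-1)}\,\frac{y_k-q^{-2}}{q^{2k-2}-q^{-2}},\qquad y_k=1+(q-q^{-1})\sum_{a=1}^{k-1}\check R_{(a,k)} .
\end{equation*}
I then insert this into the supertrace and treat the two kinds of terms separately.
\begin{itemize}
\item The constant part of $y_k-q^{-2}$ gives $(1-q^{-2})\,str_{1,\ldots,k}\Ec^{(k-1)}X_1\cdots X_k=(1-q^{-2})\beta_{k-1}\gamma_1$.
\item Each term $\check R_{(a,k)}$ is handled by writing $\check R_{(a,k)}=\check R_{k-1}\cdots\check R_a\cdots\check R_{k-1}$. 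Using the RTT relation $\check RX_1X_2=X_1X_2\check R$ and cyclicity of the full supertrace, I move the $\check R$ factors through the $X$'s. Since $\Ec^{(k-1)}$ is a symmetrizer, it absorbs $\check R_b$ for $b<k-1$ with the eigenvalue $q$.
\item Each such term then reduces to an expression of the form $str_{1,\ldots,k}\Ec^{(k-1)}X_1\cdots X_{k-1}\check R_{k-1}X_k$.
\end{itemize}

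The main obstacle is the partial supertrace $str_k$ of expressions like $\check R_{k-1}X_{k-1}X_k$, and its iterates. The point is that $str_2\,\check R\,Y_1Z_2$ should reproduce $Y*Z=str_1P^qY_1Z_2$ up to a diagonal correction. The $(q-q^{-1})\sum_{i>j}e_{ii}\ot e_{jj}$ part of $\check R$ accounts for the $q^{\ve(i-j)}$ twist in $P^q$. Iterating this produces the powers $X^{[r]}$ paired with $\Ec^{(k-r)}$, and so an identity of the form $c_k\beta_k=\sum_r d_{k,r}\gamma_r\beta_{k-r}$ with explicit $q$-coefficients. I would do this computation first for small $k$ to pin down the normalization.

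The hardest part will be showing that the coefficients collapse to $k$ and $1$. The idea is that the geometric sums $1+q^2+\cdots$ arising from absorbing $\check R_b$ into $\Ec^{(k-1)}$ telescope against the denominator $q^{2k-2}-q^{-2}$. If they do not cancel directly, I would follow \cite{JLZ2}: derive the recursion instead from the MacMahon relation $\lambda(-t)\si(t)=1$ combined with a companion expansion of $\alpha_k$ via $\Ec^{(1^k)}$. This gives two recursions whose combination forces the stated normalization.
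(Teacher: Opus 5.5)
There is a genuine gap: the step that is supposed to bring $\gamma_r=str X^{[r]}$ into the recursion is missing, and the mechanism you propose for it does not work. The paper itself gives no argument here; it only says the identities follow from the MacMahon Master Theorem and refers to \cite{JLZ2}.

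Your reduction steps are correct. Comparing coefficients, the second identity is equivalent to $k\beta_k=\sum_{r=1}^{k}\gamma_r\beta_{k-r}$. The first identity then follows from $\lambda(-t)\si(t)=1$ (Theorem~\ref{Mac}) exactly as you say. The fusion step is also fine. Absorbing $\check R_a\cdots\check R_{k-2}$ into $\Ec^{(k-1)}$ on both sides, via the RTT relation and cyclicity, gives
\[
(q^{2k-2}-q^{-2})\,\beta_k=(1-q^{-2})\,\beta_{k-1}\gamma_1+q^{-1}(q^{2k-2}-1)\,str_{1,\ldots,k}\,\Ec^{(k-1)}\check R_{k-1}X_1\cdots X_k .
\]

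The problem is the next step. The off-diagonal part of $\check R$ is the plain super permutation $\sum_{i\neq j}(-1)^{\bar j}e_{ij}\ot e_{ji}$, with no $q^{\pm1}$ weights. The term $(q-q^{-1})\sum_{i>j}e_{ii}\ot e_{jj}$ only contributes products in which one factor is a diagonal entry, such as $y_{ib}z_{jj}$. So no partial supertrace of $\check R Y_1Z_2$ equals $Y*Z$ up to a diagonal correction; even for $Y=Z=X$ the entries differ off the diagonal. There is therefore no entrywise lemma to iterate. The weights $q^{\ve(i-j)}$ only emerge after using the defining relations of $A_q(\Mat_{m|n})$ and taking full supertraces. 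Already for $m=2$, $n=0$, using $x_{22}x_{11}=x_{11}x_{22}+(q-q^{-1})x_{12}x_{21}$, one finds
\[
str_{1,2}\,\check R X_1X_2=\tfrac{q+q^{-1}}{2}\,\gamma_2+\tfrac{q-q^{-1}}{2}\,\gamma_1^2 .
\]
Given the fusion identity, this is equivalent to the $k=2$ case of the theorem itself. So what must be proved is, for $r=2$, $\gamma_2=str_{1,2}\,(\Ec^{(2)}-\Ec^{(1^2)})X_1X_2=\beta_2-\alpha_2$, together with its analogues for all $r$. In view of Theorem~\ref{quantum-JT}, the general statement is $\gamma_r=\sum_{a+b=r-1}(-1)^b\,str_{1,\ldots,r}\Ec^{(a+1,1^b)}X_1\cdots X_r$. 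This computation is exactly what your proposal leaves out.

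The fallback does not close the gap either. The recursion for $\alpha_k$ coming from $\Ec^{(1^k)}$ involves the same $\check R$-traces, and combining it with the one for $\beta_k$ only reproduces the MacMahon relation. At $k=2$ the two recursions are
\[
(q+q^{-1})\beta_2=q^{-1}\gamma_1^2+str\,\check RX_1X_2,\qquad (q+q^{-1})\alpha_2=q\gamma_1^2-str\,\check RX_1X_2 ,
\]
and they simply add up to $\alpha_2+\beta_2=\gamma_1^2$. The $*$-powers $\gamma_r$ for $r\geq2$ never enter, because $\gamma_r$ is defined through $P^q$ independently of the idempotents. To finish, you need a separate identity expressing $str X^{[r]}$ through supertraces of Hecke-algebra elements against $X_1\cdots X_r$, proved from the relations of $A_q(\Mat_{m|n})$. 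As it stands, the proposal establishes only that the first identity follows from the second.
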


\subsection{Quantum super Goulden-Jackson identities and Littlewood correspondence III }
Fix a partition $\lambda \vdash r$. On the  commutative algebra $\mathfrak{B}_{m|n}$ generated by $\alpha_i$ or $\beta_i$ \eqref{genefcn1}, we denote
\[
A=(\alpha_{\lambda_i^{T}-i+j})_{\lambda_1\times \lambda_1},\ \ \ \ \  B=(\beta_{\lambda_i-i+j})_{\lambda_1^T\times \lambda_1^T}.
\]
Then we have the following two special elements in $\mathfrak{B}_{m|n}$:
\begin{align*}
  &\det(A)=\sum_{\mu}K_{\lambda^{T},\mu}^{-1}\alpha_{\mu_1}\cdots \alpha_{\mu_{\lambda_1}}, \\
  &\det(B)=\sum_{\mu}K_{\lambda,\mu}^{-1}\beta_{\mu_1}\cdots \beta_{\mu_{\lambda_1}}.
\end{align*}

The following is a generalization of the Goulden-Jackson identities \cite{GJ,KS} in  quantum super setting.  It can be viewed as a generalization of the Jacobi-Trudi identity of Schur supersymmetric polynomials.
\begin{theorem}\label{quantum-JT} We have that
\begin{equation}\label{GJ-identity}
\det(A)=\det(B)=str_{1,\ldots,r}(\mathcal{E}_{\Tc}^{\lambda}X_1\cdots X_r) =\sum_{I}\frac{ \Imm_{\chi_q^{\lambda}}(X_I)}{\alpha_{\mathrm{q}^2}(I)},
\end{equation}
where the sum is over all non-decreasing ordered multisets $I$ of $[m+n]$ satisfying $|I|=r$.
\end{theorem}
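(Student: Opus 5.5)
The plan is to establish the three equalities in \eqref{GJ-identity} in turn, treating the $\alpha_k$ and $\beta_k$ as formal analogues of elementary and complete supersymmetric functions inside the commutative algebra $\mathfrak{B}_{m|n}$.

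\textbf{Step 1: the last equality.} This is essentially bookkeeping. By Proposition~\ref{imm-char}, for each non-decreasing $I$ the quantity $\Imm_{\chi_q^\lambda}(X_I)/\alpha_{\mathrm q^2}(I)$ equals
\[
\frac{(-1)^{\bar I}}{\alpha(I)}\sum_{\si\in\Sym_r}\langle i_{\si_1},\dots,i_{\si_r}\mid \mathcal E_{\Tc}^\lambda X_1\cdots X_r\mid i_{\si_1},\dots,i_{\si_r}\rangle .
\]
As $\si$ runs over $\Sym_r$, the tuple $(i_{\si_1},\dots,i_{\si_r})$ runs over all rearrangements of $I$, each hit exactly $\alpha(I)$ times. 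The parity $(-1)^{\bar I}$ does not depend on the ordering. Summing over all non-decreasing $I$ with $|I|=r$ therefore gives
\[
\sum_{J}(-1)^{\bar J}\langle J\mid \mathcal E_{\Tc}X_1\cdots X_r\mid J\rangle=str_{1,\ldots,r}(\mathcal E_{\Tc}X_1\cdots X_r),
\]
by the formula for $str_{1,\ldots,r}$ recalled in Section~4. The same computation, applied to $\lambda=(1^k)$ and $(k)$, is what underlies \eqref{genefcn1}.

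\textbf{Step 2: $\det(A)=\det(B)$.} This is formal. Theorem~\ref{Mac} gives $\lambda(-t)\si(t)=1$ in the commutative algebra $\mathfrak B_{m|n}$, so $\alpha_k,\beta_k$ satisfy exactly the relations between elementary and complete symmetric functions. The classical identity $\det(e_{\lambda^T_i-i+j})=\det(h_{\lambda_i-i+j})$ is a consequence of $E(-t)H(t)=1$ alone (Jacobi's complementary-minor identity for the inverse Toeplitz matrices). It therefore transfers verbatim to give $\det(A)=\det(B)$.

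\textbf{Step 3: $\det(B)=str_{1,\ldots,r}(\mathcal E_{\Tc}^\lambda X_1\cdots X_r)$.} This is the main step. Write $\Phi(\chi)=\sum_{|I|=r}\Imm_{\chi}(X_I)/\alpha_{\mathrm q^2}(I)$ for any character $\chi$ of $\Hc_r$. By Step~1 and linearity of immanants in the character, $\Phi$ is linear in $\chi$.

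Expand $\det(B)=\sum_{\mu}K^{-1}_{\lambda,\mu}\beta_{\mu_1}\cdots\beta_{\mu_l}$. Each product $\beta_{\mu_1}\cdots\beta_{\mu_l}$ is a product of sums over multisets of trivial-character immanants. The second identity of the generalized Littlewood--Merris--Watkins corollary \eqref{gene-qLMW}, summed over all $I$ with $|I|=r$ (a sum over $I$ of sums over decompositions $(I_1,\dots,I_l)$ is exactly the sum over all tuples $(I_1,\dots,I_l)$), yields
\[
\beta_{\mu_1}\cdots\beta_{\mu_l}=\Phi(\phi_q^\mu).
\]
Since $\phi_q^\mu=\sum_\nu K_{\nu,\mu}\chi_q^\nu$ (Young's rule for $\Hc_r$ at generic $q$, the same as for $\Sym_r$), we obtain
\[
\det(B)=\sum_{\mu,\nu}K^{-1}_{\lambda,\mu}K_{\nu,\mu}\,\Phi(\chi_q^\nu)=\Phi(\chi_q^\lambda),
\]
using the orthogonality $\sum_\mu K^{-1}_{\lambda,\mu}K_{\nu,\mu}=\delta_{\lambda\nu}$. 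Terms with $\nu\notin H(m,n;r)$ vanish by Proposition~\ref{imm-char}, which is harmless.

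\textbf{Main obstacle.} The delicate point is verifying that the normalizations match. Specifically, one must check that the factor $\alpha_{\mathrm q^2}(I)/\prod_j\alpha_{\mathrm q^2}(I_j)$ in \eqref{gene-qLMW} cancels exactly when dividing by $\alpha_{\mathrm q^2}(I)$ and summing over $I$. One must also confirm that the ordering of factors is immaterial, which holds because $\mathfrak B_{m|n}$ is commutative. If the corollary's proof is not trusted at this level, the fallback is to argue directly. Using Proposition~\ref{imm-char} and the fact that $\sum_{\si}\check R_\si \mathcal E^{(\mu_1)}\cdots\mathcal E^{(\mu_l)}\check R_{\si^{-1}}$ is the induced character, one shows that $str_{1,\ldots,r}$ of the product of symmetrizers on consecutive blocks times $X_1\cdots X_r$ factorizes as $\beta_{\mu_1}\cdots\beta_{\mu_l}$. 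This factorization would follow from the RTT relation, which lets each block-symmetrizer commute past $X_1\cdots X_r$, combined with multiplicativity of the partial supertraces over disjoint tensor blocks.
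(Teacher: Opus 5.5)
Your proof is correct, and it differs from the paper's only in Step 2.

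\textbf{Steps 1 and 3.} These are essentially the paper's argument. The paper expands $\det(A)=\sum_\mu K^{-1}_{\lambda^T,\mu}\alpha_{\mu_1}\alpha_{\mu_2}\cdots$ and writes each product as $str_{1,\ldots,r}(\Ec^{(1^{\mu_1})}\Ec^{(1^{\mu_2})}\cdots X_1\cdots X_r)$. It then runs the coset computation of Proposition~\ref{imm-char} backwards, turning the block idempotent into the induced sign character $\psi_q^\mu$, and finishes with $\sum_\mu K^{-1}_{\lambda^T,\mu}\psi_q^\mu=\chi_q^\lambda$. Your Step 3 is the mirror image with $\phi_q^\mu$ and Young's rule. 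The same block and coset computation is packaged inside the corollary \eqref{gene-qLMW}. Your normalization check is right: each tuple $(I_1,\dots,I_l)$ determines $I$, so the factors $\alpha_{\mathrm q^2}(I)$ cancel and the sum over $I$ becomes $\beta_{\mu_1}\cdots\beta_{\mu_l}$.

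\textbf{Step 2.} Here the routes genuinely differ. The paper never invokes Theorem~\ref{Mac}; it shows $\det(A)$ and $\det(B)$ \emph{separately} equal the same supertrace. Its $\det(B)$ case is the ``similarly'' that your Step 3 writes out. You instead do the Kostka inversion once. You then get the other determinant from $\lambda(-t)\si(t)=1$ via $\det(e_{\lambda^T_i-i+j})=\det(h_{\lambda_i-i+j})$. That identity holds in any commutative ring with $E(-t)H(t)=1$, because the ring of symmetric functions is freely generated by the $e_k$.

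\textbf{Comparison.} Your route is shorter and shows that $\det(A)=\det(B)$ is a purely formal consequence of the MacMahon relation in $\mathfrak B_{m|n}$. The paper's route does not depend on the imported Theorem~\ref{Mac}; it uses only Proposition~\ref{imm-char} and Hecke character theory.

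\textbf{A small correction to your fallback.} The RTT relation is not what makes $str_{1,\ldots,r}$ of the block product factorize. That factorization holds because each block idempotent acts on tensor factors disjoint from the other blocks' $X_a$, and the diagonal matrix entries of these even operators are even. RTT enters only inside Proposition~\ref{imm-char}, where $\check R_{\si^{-1}}$ is moved past $X_1\cdots X_r$.
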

\begin{proof}
  By definition and proposition \ref{imm-char},
\begin{equation*}
\begin{aligned}
\det(A)
 &=\sum_{\mu\vdash r} K_{\lambda^{T},\mu}^{-1}str_{1,\ldots,r}
 \mathcal{E}^{(1^{\mu_1})}\cdots \mathcal{E}^{(1^{\mu_{\lambda_1}})}X_1\cdots X_r\\
 &=\sum_{\mu\vdash r}\sum_{J}(-1)^{\bar J}\langle j_1,\dots ,j_r \mid K_{\lambda^{T},\mu}^{-1}\mathcal{E}^{(1^{\mu_1})}\cdots \mathcal{E}^{(1^{\mu_{\lambda_1}})}X_1\cdots X_r\mid  j_1,\dots ,j_r  \rangle,\\
 &=\sum_{\mu}\sum_{J'}\frac{(-1)^{\bar J'}} {\alpha_{\mathrm q^2}(J')}\langle  j'_1,\dots ,j'_r  \mid K_{\lambda^{T},\mu}^{-1} \sum_{\si\in \mathfrak{S}_r}\check{R}_{\si}\mathcal{E}^{(1^{\mu_1})}\cdots \\
 &\quad \cdots \mathcal{E}^{(1^{\mu_{\lambda_1}})}\check{R}_{\si^{-1}} X_1\cdots X_r\mid j'_1,\dots ,j'_r \rangle\\
 &=\sum_{\mu}\sum_{J'}\frac{(-1)^{\bar J'}}{\alpha_{\mathrm q^2}(J')}\langle j'_1,\dots ,j'_r \mid K_{\lambda^{T},\mu}^{-1} \psi_q^{\mu} X_1\cdots X_r\mid  j'_1,\dots ,j'_r \rangle\\
 &=\sum_{J'}\frac{(-1)^{\bar J'}}{\alpha_{\mathrm q^2}(J')}\langle  j'_1,\dots ,j'_r \mid\chi_q^{\lambda} X_1\cdots X_r\mid  j'_1,\dots ,j'_r\rangle\\
 &=str_{1,\ldots,r}(\mathcal{E}_{\Tc}^{\lambda}X_1\cdots X_r)=\sum_{J'}\frac{\Imm_{\chi_q^{\lambda}}(X_{J'})}{\alpha_{\mathrm q^2}(J')},
  \end{aligned}
  \end{equation*}
where the second sum in the second line is over all sequences $J$ with $r$ elements and the second sum of the third line runs over all non-decreasing multisets $J'=(j'_1\leq \dots \leq j'_r)$ of $[m+n]$, and the fifth line is because
   $\sum_{\mu}K_{\lambda^{T},\mu}^{-1}\psi_q^{\mu}=\chi_q^{\lambda}$. Similarly, it follows that $\det(B)=str_{1,\ldots,r}\mathcal{E}^{\lambda} X_1\cdots X_r$.
\end{proof}

Define a $q$-power series with coefficients in $A_q(\Mat_{m|n})$:
\begin{equation}\label{char}
 \Gamma(t)=\sum_{k=0}^{\infty}(-1)^k \alpha_{k}t^{m-n-k}.
\end{equation}
From the MacMahon Master Theorem,
\begin{equation}
  \Gamma(t)^{-1}=\sum_{k=0}^{\infty} \beta_{k}t^{n-m-k}.
\end{equation}
\begin{lemma}\label{w-solution}
There exists the unique elements $\omega_1,\ldots,\omega_m, \varpi_1,\ldots,\varpi_n$ over the algebraic closure field of fraction of integral domain generated by $\alpha_k, k>0$ satisfying the condition
\beq\label{char-equation}
\Gamma(t)\prod_{i=1}^n(t-\varpi_i)=\prod_{j=1}^m(t-\omega_j).
\eeq
\end{lemma}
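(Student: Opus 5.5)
Since $\mathfrak{B}_{m|n}$ is commutative, I plan to treat $\alpha_1,\alpha_2,\ldots$ as commuting quantities, pass to the fraction field $F$ of the integral domain $D$ they generate, and work in its algebraic closure $\overline F$. Substituting $s=t^{-1}$, the identity \eqref{char-equation} becomes a statement about formal power series in $s$ over $\overline F$. Set
\[
E(s)=\sum_{k\ge 0}(-1)^k\alpha_k s^k,
\]
so that $\Gamma(t)=t^{m-n}E(t^{-1})$. Multiplying \eqref{char-equation} by $s^{m}$ turns it into
\[
E(s)\prod_{i=1}^n(1-\varpi_i s)=\prod_{j=1}^m(1-\omega_j s).
\]
The task is thus to show that $E(s)$ factors as a polynomial of degree at most $m$ divided by a polynomial of degree at most $n$, with both polynomials having constant term $1$. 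Once such a factorization exists, the $\omega_j$ and $\varpi_i$ are the reciprocal roots of the two polynomials. Zero roots are allowed, which is the degenerate case.

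The key input is the supersymmetric specialization behind Theorem \ref{quantum-JT} and Littlewood correspondence II. The elements $\alpha_k$, $\beta_k$ and all the immanant sums satisfy exactly the relations satisfied by the elementary and complete supersymmetric functions in $m|n$ variables. In particular, Theorem \ref{quantum-JT} shows that for every partition $\lambda\notin H(m,n)$ the Jacobi--Trudi determinant $\det(A)$ equals $\sum_I \Imm_{\chi_q^\lambda}(X_I)/\alpha_{\mathrm q^2}(I)$. This sum vanishes by Proposition \ref{imm-char}, since $\lambda\notin H(m,n;r)$ forces every summand to be zero.

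Take $\lambda=((n+1)^{m+1})$, the $(m+1)\times(n+1)$ rectangle. Its conjugate is $((m+1)^{n+1})$, so the Jacobi--Trudi matrix is $A=(\alpha_{m+1-i+j})_{1\le i,j\le n+1}$, a Hankel/Toeplitz matrix. Moreover $\lambda_{m+1}=n+1>n$, so $\lambda\notin H(m,n)$ and hence $\det(A)=0$. Applying the same argument to the rectangle shifted by $p$ columns, using $\det(\alpha_{m+p-i+j})=0$ for all $p\ge 1$, I get a vanishing family of Toeplitz determinants. By Kronecker's classical criterion, the vanishing of all $(n+1)\times(n+1)$ Toeplitz minors $\det(\alpha_{m+p-i+j})$ for $p\ge1$ is equivalent to $E(s)$ being a rational function $P(s)/Q(s)$ with $\deg P\le m$ and $\deg Q\le n$. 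Normalizing $Q(0)=1$ gives $P(0)=\alpha_0=1$.

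To make this step concrete, I take a minimal $n'\le n$ for which a linear recurrence of order $n'$ holds for the coefficients beyond index $m$. Its coefficient vector solves a linear system over $F$, so $Q\in F[s]$ and then $P=EQ\in F[s]$. Factoring $P$ and $Q$ over $\overline F$ then yields the elements $\omega_j,\varpi_i$, padding with zeros up to $m$ and $n$ respectively.

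Uniqueness comes from two facts. The series $E(s)$ is determined by the $\alpha_k$, and the reduced form of a rational function is unique. Once the pair $(P,Q)$ is fixed, the multisets of reciprocal roots are determined, so the solution is unique up to ordering.

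The main obstacle is the common-factor issue. If $P$ and $Q$ share a factor $(1-cs)$, one could add $c$ to both lists, which breaks uniqueness. I expect to handle this by insisting on the reduced representation with the degree constraints. If the degrees are not both saturated, I would note that the statement is to be read with zeros padded, taking "unique" in this sense. A secondary point is to verify rigorously that the vanishing relations among the $\alpha_k$ are exactly those needed for Kronecker's criterion. I would derive this directly from Theorem \ref{quantum-JT} and Proposition \ref{imm-char} applied to the rectangles $((n+1)^{m+1})$ widened to $((n+1)^{m+p})$ conjugate shapes, checking the indices carefully.
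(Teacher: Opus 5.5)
\paragraph{Existence.} Your existence argument takes a different route from the paper's, and it works. The rectangles $((n+1)^{m+p})$, $p\ge 1$, lie outside $H(m,n)$. So by Theorem~\ref{quantum-JT} and Proposition~\ref{imm-char}, all the Toeplitz determinants $\det(\alpha_{m+p-i+j})_{(n+1)\times(n+1)}$ vanish. A fixed-size form of Kronecker's theorem then gives $E(s)=P(s)/Q(s)$ with $\deg P\le m$ and $\deg Q\le n$. That fixed-size version is the real input here, and you should cite or prove it, for instance by induction on $n$ via the Desnanot--Jacobi identity. You cannot simply assume that a recurrence ``beyond index $m$'' exists.

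\paragraph{Uniqueness: the gap.} Let $P_0/Q_0$ be the reduced form, with $\deg P_0=m_0\le m$ and $\deg Q_0=n_0\le n$. The solutions of $E(s)\prod_i(1-\varpi_i s)=\prod_j(1-\omega_j s)$ with lists of lengths exactly $m$ and $n$ are precisely $P=P_0R$, $Q=Q_0R$ with $R(0)=1$ and $\deg R\le \min(m-m_0,\,n-n_0)$.
\begin{itemize}
\item Nothing in the equation forces $R=1$. ``Insisting on the reduced representation'' is therefore an added convention, not a proof.
\item Your fallback is also wrong. If $m_0<m$ and $n_0<n$, then appending any $c\in\overline F$ to both lists gives a different solution. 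Uniqueness then fails outright, not merely up to zero-padding.
\item If only one degree is saturated, uniqueness does hold with zero padding.
\end{itemize}
So you must show that the degenerate case $m_0<m$, $n_0<n$ cannot occur.

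\paragraph{What the paper supplies.} This is exactly the key step in the paper. The $n\times n$ matrix $A=(\alpha_{m-i+j})$ is the Jacobi--Trudi matrix of the rectangle $(n^m)\in H(m,n)$, and $\det A\neq 0$ by Theorem~\ref{quantum-JT}. One way to see the nonvanishing: the image of $\det A$ under $\Phi$ is the supersymmetric Schur polynomial $\mathbb{S}_{(n^m)}$, which is nonzero.

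This condition rules out degeneracy. Two distinct solutions would give two distinct solutions $(\bar e_1,\dots,\bar e_n)$ of the linear system with matrix $A$ coming from the coefficients of $t^{-1},\dots,t^{-n}$. Since $\det A\neq0$, that system is uniquely solvable. This pins down $\prod(t-\varpi_i)$, and then $\prod(t-\omega_j)$.

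With $\det A\neq 0$ available, existence also becomes elementary, so you no longer need Kronecker. Each further coefficient equation is a linear combination of these $n$ equations, because $\det A'=0$ for the shapes $((n+1)^{m+1},1^{\ell})\notin H(m,n)$.
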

\begin{proof}
We denote by $e_k$ (resp. $\bar{e}_k$) the $k$-th elementary symmetric polynomials in variables $\omega_j, 1 \leq j \leq m$ (resp. $\varpi_i, 1 \leq i \leq n$). Note that the leading component in the both sides of \eqref{char-equation} is $t^m$.
Then  compare the coefficients of $t^k\ ( -n \leq k \leq m-1)$ on the two sides of \eqref{char-equation}:
\beq\label{ek-equations}
\sum_{j=0}^{n}\alpha_{m-k-j}\bar e_j=\left\{\begin{array}{cc}
e_{m-k}, \ &0\leq k\leq m-1\\
0, \ &-n\leq k\leq -1\\
\end{array}\right.
\eeq
Note that each $e_i \ (1\leq i\leq m)$ is expressed as a polynomial in $\bar e_1,\ldots,\bar e_{n}$ from the m equations with  $0\leq k\leq m-1$. So we solve for elements  $\bar e_1,\ldots,\bar e_{n}$ firstly.
The coefficient matrix of the n equations with $-n \leq k\leq -1$ is
\[
A = \begin{pmatrix}
\alpha_m & \alpha_{m+1}& \cdots & \alpha_{m+n-1}\\
\alpha_{m-1} & \alpha_{m}& \cdots & \alpha_{m+n-2}\\
\vdots & \vdots & \ddots & \vdots\\
\alpha_{m+1-n} &\cdots & \cdots & \alpha_m
\end{pmatrix}_{n\times n}.
\]
Then $A=(\alpha_{\lambda_i^{T}-i+j})_{\lambda_1\times \lambda_1}$, where $\lambda=(n^m)$. By the $q$-super Goulden-Jackson identities \eqref{GJ-identity}, we have that $\det(A)\neq 0$. Hence there exists the unique solution $\bar e_1,\ldots,\bar e_{n}$ satisfying  the n equations with $-n \leq k\leq -1$. Then the the system of $(m+n)$ equations in variables  $ e_i, \bar e_j, 1 \leq i \leq m, 1\leq j  \leq n$ is solved. Hence these elements $\omega_1,\ldots,\omega_m, \varpi_1,\ldots,\varpi_n$ are determined uniquely by $\alpha_k$.

Moreover, considering any equation in the coefficients of  $t^k, k<-n$, we claim that this equation can be expressed as a linear combination of  the n equations with $-n \leq k\leq -1$ in \eqref{ek-equations} over the algebraic closure field. Indeed, the augmented matrix of the above $(n+1)$ equations  is
\[
A' = \begin{pmatrix}
\alpha_{m-k-n} & \alpha_{m-k-n+1}& \cdots & \alpha_{m-k}\\
\alpha_{m} & \alpha_{m+1}& \cdots & \alpha_{m+n}\\
\vdots & \vdots & \ddots & \vdots\\
\alpha_{m+1-n} &\cdots & \cdots & \alpha_{m+1}
\end{pmatrix}_{(n+1)\times(n+1)}.
\]
Then $A'=(\alpha_{\lambda_i^{T}-i+j})_{\lambda_1\times \lambda_1}$, where $\lambda=((n+1)^{m+1},\ldots)$. It follows from the $q$-super Goulden-Jackson identities \eqref{GJ-identity} and proposition \ref{imm-char} that $\det(A')= 0$.
\end{proof}
\begin{remark}
We conjecture that the following quantum super Cayley-Hamilton theorem holds
\begin{equation}\label{q-super-cay-ham}
\sum_{0\leq i \leq m,\atop 0\leq j \leq n} (-1)^{i+j}e_i(\omega_1,\ldots,\omega_m)X^{[m+n-i-j]}*e_j(\varpi_1,\dots,\varpi_n)I=0.
\end{equation}
For instance,
when $m=n=1$, we have that
\ben
\bal
&\omega_1=\alpha_1-\alpha_1^{-1}\alpha_2=x_{11}-qx_{12}x_{21}(x_{11}-x_{22})^{-1},\\
&\varpi_1=-\alpha_1^{-1}\alpha_2=x_{22}-qx_{12}x_{21}(x_{11}-x_{22})^{-1}.
\eal
\een
Then
\beq
(X-\omega_1I)*(X-\varpi_1I)=X^{[2]}-\omega_1X-X*\varpi_1I+\omega_1\varpi_1=0.
\eeq
%When $m=1,n=2$, we have that
%\ben
%\bal
%&e_1(\omega_1)=\omega_1=\alpha_1+(\alpha_1\alpha_2-\alpha_3)(\alpha_2-\alpha_1^2)^{-1},\\
%&e_1(\varpi_1,\varpi_2)=\varpi_1+\varpi_2=(\alpha_1\alpha_2-\alpha_3)(\alpha_2-\alpha_1^2)^{-1},\\
%&e_2(\varpi_1,\varpi_2)=\varpi_1\varpi_2=-\alpha_2-\alpha_1(\alpha_1\alpha_2-\alpha_3)(\alpha_2-\alpha_1^2)^{-1}.
%\eal
%\een
%Then
%\begin{equation}
%X^{[3]}-\omega_1X^{[2]}-X^{[2]}*(\varpi_1+\varpi_2)I+\omega_1X*(\varpi_1+\varpi_2)I+X*(\varpi_1\varpi_2)I-\omega_1\varpi_1\varpi_2=0.
%\end{equation}
But we don't know that this equality holds in general. When $q\rightarrow 1$, the equation \eqref{q-super-cay-ham} is the super Cayley-Hamilton theorem proved in \cite{UM,UM2}. When $n=0$, the equation \eqref{q-super-cay-ham} specializes to the quantum Cayley-Hamilton theorem in \cite{Zh,JLZ}.
\end{remark}

\begin{proposition}\label{alpha-sym}
For $k>0$,
\beq
\alpha_k=\mathbb S_{(1^k)}(\omega_1,\ldots,\omega_m,-\varpi_1,\ldots,-\varpi_n).
\eeq
\end{proposition}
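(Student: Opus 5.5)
We derive the statement from the generating-function identity \eqref{char-equation} of Lemma \ref{w-solution}, which fixes $\omega_j,\varpi_i$. Recall that the supersymmetric Schur function $\mathbb S_{(1^k)}(x_1,\ldots,x_m;y_1,\ldots,y_n)$ is the $t^k$-coefficient of
\[
\prod_{j=1}^m(1+x_jt)\prod_{i=1}^n(1-y_it)^{-1}.
\]
With the sign convention of the statement, arguments $(\omega;-\varpi)$, this generating function becomes
\[
\frac{\prod_{j=1}^m(1+\omega_jt)}{\prod_{i=1}^n(1+\varpi_it)}.
\]
So it suffices to show, in the completed algebra over the algebraic closure used in Lemma \ref{w-solution},
\[
\sum_{k\ge 0}\alpha_k s^k=\frac{\prod_{j=1}^m(1+\omega_j s)}{\prod_{i=1}^n(1+\varpi_i s)}.
\]

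\textbf{Step 1.} Rewrite \eqref{char-equation}. We have $\Gamma(t)=t^{m-n}\sum_k(-1)^k\alpha_kt^{-k}=t^{m-n}\lambda(-t^{-1})$. Dividing \eqref{char-equation} by $t^{m}$ gives
\[
\lambda(-t^{-1})\prod_{i=1}^n(1-\varpi_it^{-1})=\prod_{j=1}^m(1-\omega_jt^{-1}).
\]
This identity is understood as an equality of formal Laurent series in $t^{-1}$, holding coefficient by coefficient. Lemma \ref{w-solution} guarantees the coefficients of $t^{k}$ for $-n\le k\le m-1$. It also guarantees the coefficients with $k<-n$: the determinant argument there shows those equations are consequences of the first $n$. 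We keep both parts.

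\textbf{Step 2.} Substitute $s=-t^{-1}$. This yields
\[
\lambda(s)\prod_{i=1}^n(1+\varpi_is)=\prod_{j=1}^m(1+\omega_js).
\]
Since $\prod_i(1+\varpi_is)$ has constant term $1$, it is invertible in formal power series. Multiplying by its inverse gives exactly the required generating function. Comparing coefficients of $s^k$ then gives $\alpha_k=\mathbb S_{(1^k)}(\omega_1,\ldots,\omega_m,-\varpi_1,\ldots,-\varpi_n)$ for all $k>0$.

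The elements $\alpha_k$ commute pairwise and the $\omega,\varpi$ lie in a commutative field. Hence all these manipulations are legitimate, and no ordering issue arises.

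\textbf{Main obstacle.} The delicate point is making sure \eqref{char-equation} holds as a full identity, for all powers $t^k$ with $k<-n$, and not only for the $m+n$ equations used to define $\omega,\varpi$. This rests on the vanishing $\det(A')=0$ from Lemma \ref{w-solution}. That vanishing comes from Theorem \ref{quantum-JT} together with Proposition \ref{imm-char}: immanants for $\lambda\notin H(m,n;r)$ vanish. We would check carefully that for each $k<-n$ the relevant partition indeed has $\lambda_{m+1}>n$. We would also check that $\det(A)\neq0$ for $\lambda=(n^m)$, so that expressing each later equation as a combination of the first $n$ equations is valid. Given this, the rest is formal series algebra.
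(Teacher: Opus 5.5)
Your proposal is correct and follows essentially the paper's argument. The paper also uses \eqref{char-equation} to write $\Gamma(t)=\prod_{j=1}^m(t-\omega_j)/\prod_{i=1}^n(t-\varpi_i)$, expands this in powers of $t^{-1}$ as $\sum_k(-1)^k\mathbb S_{(1^k)}(\omega_1,\ldots,\omega_m,-\varpi_1,\ldots,-\varpi_n)t^{m-n-k}$, and compares coefficients with \eqref{char}; your substitution $s=-t^{-1}$ is only a change of variable. Your point that \eqref{char-equation} must hold in every degree $t^k$ with $k<-n$ is supplied by Lemma \ref{w-solution}, through $\det(A')=0$ for a partition with $\lambda_{m+1}=n+1>n$, and the paper relies on it implicitly.
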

\begin{proof}
It follows from lemma \ref{w-solution} with \eqref{char} and the definition of Schur supersymmetric polynomials that  $\Gamma(t)$ can be written by supersymmetric polynomials:
\beq
\bal
\Gamma(t)&=\frac{\prod_{i=1}^m(t-\omega_i)}{\prod_{j=1}^n(t-\varpi_j)}\\
&=\frac{\sum_{i=0}^{m}(-1)^i e_i(\omega_1,\ldots,\omega_m)t^{m-i}}
{\sum_{j=0}^{n}(-1)^j e_j (\varpi_1,\ldots,\varpi_n)t^{n-j}}\\
&=\sum_{k=0}^{\infty}(-1)^{k} \mathbb S_{(1^k)}(\omega_1,\ldots,\omega_m,-\varpi_1,\ldots,-\varpi_n)t^{m-n-k}.
\eal
\eeq
\end{proof}

The quantum super immanants of generator matrix of quantum coordinate superalgebra  can be expressed as a Schur supersymmetric polynomials at particular arguments, which can be viewed as a quantum super analog of Littlewood correspondence III \cite{Li}[p. 121].
\begin{theorem}\label{t:Litt3}
   Let $\lambda \vdash r$ and $\omega_1,\ldots,\omega_m, \varpi_1,\ldots,\varpi_n$ be the solution in lemma \ref{w-solution}, then
\begin{equation}\label{Littlewood3}
\mathbb{S}_{\lambda}(\omega_1,\ldots,\omega_m,-\varpi_1,\ldots,-\varpi_n)
=str_{1,\ldots,r}\Ec^{\lambda} X_1\ldots X_r= \sum_{I}\frac{ \Imm_{\chi_q^{\lambda}}(X_I)}{\alpha_{\mathrm{q}^2}(I)},
\end{equation}
where the sum is taken over all non-decreasing ordered multisets $I$ of $[m+n]$  satisfying $|I|=r$.
In particular,
\beq
\bal
&\beta_r=\mathbb{S}_{r}(\omega_1,\ldots,\omega_m,-\varpi_1,\ldots,-\varpi_n),\\
&\alpha_r=\mathbb{S}_{(1^r)}(\omega_1,\ldots,\omega_m,-\varpi_1,\ldots,-\varpi_n),\\
&\gamma_r=strX^{[r]}=p_{m,n}^{(r)}(\omega_1,\ldots,\omega_m,-\varpi_1,\ldots,-\varpi_n).
\eal
\eeq
\end{theorem}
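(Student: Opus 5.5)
The plan is to reduce everything to the quantum super Goulden--Jackson identity (Theorem \ref{quantum-JT}) together with Proposition \ref{alpha-sym}, and then use the dual Jacobi--Trudi formula for Schur supersymmetric polynomials.

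\textbf{Step 1: the elementary generators.} Proposition \ref{alpha-sym} gives $\alpha_k=\mathbb S_{(1^k)}(\omega,-\varpi)$ for all $k>0$. Here $(\omega,-\varpi)$ abbreviates $(\omega_1,\ldots,\omega_m,-\varpi_1,\ldots,-\varpi_n)$. The identity also holds trivially for $k\le 0$, with the conventions $\alpha_0=1$ and $\alpha_k=0$ for $k<0$.

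\textbf{Step 2: dual Jacobi--Trudi.} In the ring of supersymmetric functions, $\mathbb S_\lambda$ is the image of the ordinary Schur function $s_\lambda$ under the specialization sending $e_k\mapsto \mathbb S_{(1^k)}$. Since that specialization is a ring homomorphism, the dual Jacobi--Trudi identity transfers directly:
\[
\mathbb S_\lambda(\omega,-\varpi)=\det\big(\mathbb S_{(1^{\lambda^T_i-i+j})}(\omega,-\varpi)\big)_{\lambda_1\times\lambda_1}.
\]
Because $\mathfrak B_{m|n}$ is commutative and the $\omega_i,\varpi_j$ live in a field extension of its fraction field, this determinant identity may be evaluated there. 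Substituting Step 1 turns the right-hand side into $\det(A)$.

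\textbf{Step 3: apply Goulden--Jackson.} Theorem \ref{quantum-JT} gives
\[
\det(A)=str_{1,\ldots,r}(\Ec^\lambda X_1\cdots X_r)=\sum_I \frac{\Imm_{\chi_q^\lambda}(X_I)}{\alpha_{\mathrm q^2}(I)},
\]
which proves \eqref{Littlewood3}. The two special cases then follow directly:
\begin{itemize}
\item $\lambda=(1^r)$ is Proposition \ref{alpha-sym} itself.
\item $\lambda=(r)$ follows from $\beta_r=\det(B)$ for a one-row partition, or equivalently from Theorem \ref{quantum-JT} with $\lambda=(r)$.
\end{itemize}

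\textbf{Step 4: power sums.} I will use the classical Newton identity $\partial_t E(-t)=-E(-t)P(t)$, where $E(t)=\sum_k e_k t^k$ and $P(t)=\sum_k p_{k+1}t^k$. Specializing $e_k\mapsto \mathbb S_{(1^k)}(\omega,-\varpi)$ sends $p_k$ to the super power sum $p^{(k)}_{m,n}(\omega,-\varpi)$. On the other side, Theorem \ref{Newton-iden} gives $\partial_t\lambda(-t)=-\lambda(-t)\psi(t)$ with $\lambda(t)=\sum_k\alpha_k t^k$. Since $\lambda(-t)$ has constant term $1$ and is invertible, both $\psi(t)$ and $P(t)$ are determined recursively by the same sequence $\alpha_k=\mathbb S_{(1^k)}(\omega,-\varpi)$. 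Comparing coefficients therefore gives $\gamma_r=p^{(r)}_{m,n}(\omega,-\varpi)$. This recursion runs in the commutative algebra, since $\psi$ is expressed through the $\alpha_k$.

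\textbf{Main obstacle.} The main difficulty is bookkeeping of signs, specifically making sure the paper's convention for $\mathbb S_\lambda$ with arguments $-\varpi_j$ matches the generating function $\prod_i(t-\omega_i)/\prod_j(t-\varpi_j)$ used in Proposition \ref{alpha-sym}, so that the $e_k$-specialization really sends $s_\lambda$ to $\mathbb S_\lambda(\omega,-\varpi)$. To settle this, I would check the generating function $\prod_i(1+\omega_i u)/\prod_j(1+\varpi_j u)$ against $\sum_k \alpha_k u^k$, which is the same statement as Proposition \ref{alpha-sym} after the substitution $u=-1/t$.
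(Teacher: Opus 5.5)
Your proposal is correct and follows essentially the same route as the paper: Proposition \ref{alpha-sym} together with the dual Jacobi--Trudi expansion identifies $\mathbb S_\lambda(\omega,-\varpi)$ with $\det(A)$, Theorem \ref{quantum-JT} then gives \eqref{Littlewood3}, and Newton's identities (Theorem \ref{Newton-iden}) give the power-sum statement. The paper additionally writes the chain through the $\beta$-form $\det(B)$, which tacitly uses $\beta_k=\mathbb S_{(k)}(\omega,-\varpi)$; your version needs only the $\alpha_k$ and is therefore slightly more self-contained.
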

\begin{proof}
It follows from proposition \ref{alpha-sym} and $q$-super Goulden-Jackson identities \eqref{GJ-identity} that
\beq
\bal
&\mathbb{S}_{\lambda}(\omega_1,\ldots,\omega_m,-\varpi_1,\ldots,-\varpi_n)\\
&=\sum_{\mu}K_{\lambda,\mu}^{-1}\beta_{\mu_1}\cdots \beta_{\mu_{\lambda_1}}\\
&=\sum_{\mu}K_{\lambda^{T},\mu}^{-1}\alpha_{\mu_1}\cdots \alpha_{\mu_{\lambda_1}}\\
&=\sum_{I}\frac{ \Imm_{\chi^{\lambda}}(X_I)}{\alpha(I)}.
\eal
\eeq

Finally, by the Newton's identities in theorem \ref{Newton-iden}, $\gamma_r=strX^{[r]}$ can be expressed as the same linear combination of $\alpha_{\mu}, \mu \vdash r$ as the power sum supersymmetric polynomial $p_{m,n}^{(r)}$.
\end{proof}

Let $\mathcal{X}_m = (x_1, \ldots, x_m)$, $\mathcal{Y}_n = (y_1, \ldots, y_n)$ be two sets of indeterminates, the set of all supersymmetric polynomials is called the  algebra of supersymmetric polynomials in $\mathcal{X}_m, \mathcal{Y}_n$. We will denote it by $\textbf{Sym}^{(m|n)}$.

Consider the algebra homomorphism $\Phi: A_q(\Mat_{m|n}) \rightarrow \mathbb{C}[\mathcal{X}_m, \mathcal{Y}_n]$ defined as
\ben
\bal
&x_{ij}\mapsto 0, \quad i\neq j, \\
&x_{ii}\mapsto x_i, \quad 1 \leq i \leq m,\\
&x_{jj}\mapsto -y_j, \quad  1 \leq j \leq n.
\eal
\een
\begin{theorem}\label{iso-sym} The restriction of $\Phi$:
  $$\Phi |_{\mathfrak{B}_{m|n}} :\mathfrak{B}_{m|n}\rightarrow \textbf{Sym}^{(m|n)}$$
  is an algebra isomorphism. And
  $\{ \sum_{I}\frac{ \Imm_{\chi_q^{\lambda}}(X_I)}{\alpha_{\mathrm{q}^2}(I)}\mid \lambda \in H(m,n)\}$ is a basis of $\mathfrak{B}_{m|n}$.
\end{theorem}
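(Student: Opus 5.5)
The plan is to compute $\Phi$ on the generators $\alpha_k$ of $\mathfrak{B}_{m|n}$ and then use the known structure of $\textbf{Sym}^{(m|n)}$. Under $\Phi$ all off-diagonal entries vanish, so $\Phi(X)=\mathrm{diag}(x_1,\ldots,x_m,-y_1,\ldots,-y_n)$. Since $\Phi$ is an algebra homomorphism, it commutes with the $R$-matrix manipulations, so $\Phi(\alpha_k)=str_{1,\ldots,k}\Ec^{(1^k)}\Phi(X)_1\cdots\Phi(X)_k$. The first step is to check that this equals the elementary supersymmetric polynomial $\mathbb S_{(1^k)}(x|y)$, meaning the coefficient of $t^k$ in $\prod_i(1+x_it)/\prod_j(1-y_jt)$ up to the sign convention dictated by $x_{jj}\mapsto -y_j$.

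To carry out that check, I will use Proposition \ref{imm-char}. For a diagonal matrix, only the diagonal matrix entries $\langle I|\Ec^{(1^k)}|I\rangle$ contribute. For an ordered multiset $I$, the $q$-antisymmetrizer restricted to the weight space of $I$ gives the factor $\alpha_{\mathrm q^2}(I)$. The even indices must be distinct, and the odd indices may repeat with the super sign. Evaluated against the diagonal monomial, this produces $\prod x_{i}\prod y_j$ with coefficient one after normalization. Equivalently, one can apply $\Phi$ to $\Gamma(t)$ and use the MacMahon theorem (Theorem \ref{Mac}). The generating function $\Phi(\lambda(-t))$ factors for the diagonal quantum supermatrix into a product of one-variable factors $(1-x_it)$ and $(1+y_jt)^{-1}$, because the $q$-exterior algebra of a diagonal matrix is a tensor product. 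Then $\Phi(\omega_i)=x_i$ and $\Phi(\varpi_j)=y_j$ formally, and Proposition \ref{alpha-sym} gives $\Phi(\alpha_k)=\mathbb S_{(1^k)}(x,y)$. More generally, Theorem \ref{t:Litt3} then gives $\Phi(\sum_I \Imm_{\chi_q^\lambda}(X_I)/\alpha_{\mathrm q^2}(I))=\mathbb S_\lambda(x/y)$, the super Schur polynomial.

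For surjectivity, I will use the classical fact that $\textbf{Sym}^{(m|n)}$ is generated by the elementary supersymmetric polynomials. Its basis is given by the super Schur polynomials $\mathbb S_\lambda$ with $\lambda\in H(m,n)$, and $\mathbb S_\lambda=0$ for $\lambda\notin H(m,n)$ (Berele–Regev, Stembridge). So the image of $\Phi|_{\mathfrak{B}_{m|n}}$ is all of $\textbf{Sym}^{(m|n)}$.

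For injectivity, which I expect to be the main obstacle, I will argue as follows. Because $\mathfrak{B}_{m|n}$ is commutative and generated by the $\alpha_k$, every element is a polynomial in the $\alpha_k$. By Jacobi–Trudi, that is Theorem \ref{quantum-JT}, it is a linear combination of the elements $s_\lambda:=\sum_I \Imm_{\chi_q^\lambda}(X_I)/\alpha_{\mathrm q^2}(I)$. These satisfy the same multiplication rules as the $\mathbb S_\lambda$ by Theorem \ref{LittlewoodII}. Proposition \ref{imm-char} shows $s_\lambda=0$ for $\lambda\notin H(m,n)$. Hence $\{s_\lambda\}_{\lambda\in H(m,n)}$ spans $\mathfrak{B}_{m|n}$. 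Since $\Phi(s_\lambda)=\mathbb S_\lambda$ and these images are linearly independent, the $s_\lambda$ are linearly independent and $\Phi$ is injective. This yields both the isomorphism and the basis statement.

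The delicate point is that the relations among the $\alpha_k$ in $\mathfrak{B}_{m|n}$ must be exactly those killed by $\lambda\notin H(m,n)$. The spanning argument handles this, but it needs the expansion of arbitrary products $\alpha_\mu$ into the $s_\lambda$ to hold in $\mathfrak{B}_{m|n}$ itself, which follows from inverting the Kostka matrix in Theorem \ref{quantum-JT}.
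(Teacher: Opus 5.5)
Your argument is correct, but for injectivity it takes a different route from the paper. The paper computes $\Phi(\alpha_k)$ and gets surjectivity from the fundamental theorem of supersymmetric polynomials, exactly as you do. It then deduces injectivity from Littlewood correspondence III (Theorem~\ref{t:Litt3}). In effect, an element of $\mathfrak{B}_{m|n}$ is a polynomial in the $\alpha_k=\mathbb S_{(1^k)}(\omega,-\varpi)$, hence a supersymmetric function evaluated at the ``roots'' of Lemma~\ref{w-solution}, so it vanishes when its image under $\Phi$ does. The basis statement is then pulled back along the isomorphism.

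You reverse the order. You show directly that $\{s_\lambda\}_{\lambda\in H(m,n)}$ spans $\mathfrak{B}_{m|n}$, using Kostka inversion of Theorem~\ref{quantum-JT} together with the vanishing in Proposition~\ref{imm-char}. You then get linear independence by applying $\Phi$ and using the Berele--Regev independence of hook Schur polynomials, so the basis and injectivity come out together. This is more elementary and self-contained. It bypasses Lemma~\ref{w-solution}, which works in an algebraic closure of the fraction field of $\mathfrak{B}_{m|n}$ and so tacitly needs $\mathfrak{B}_{m|n}$ to be an integral domain. That fact is most naturally a consequence of the very isomorphism being proved. What the paper's route offers in exchange is the conceptual link to the ``eigenvalue'' picture of Littlewood III.

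Two small adjustments to your write-up:
\begin{enumerate}
\item Obtain $\Phi(s_\lambda)=\mathbb S_\lambda$ from Theorem~\ref{quantum-JT} and the classical dual Jacobi--Trudi formula applied to $\Phi(\alpha_k)$, not from Theorem~\ref{t:Litt3}. The map $\Phi$ is not defined on $\omega_i,\varpi_j$, and a formal assignment would in any case send $\varpi_j\mapsto -y_j$, not $y_j$.
\item State the computation of $\Phi(\alpha_k)$ precisely. By Proposition~\ref{imm-char}, for diagonal $X$ the normalized term $\Imm_{\chi_q^{(1^k)}}(X_I)/\alpha_{\mathrm q^2}(I)$ equals $(-1)^{\bar I}$ times the diagonal monomial times the trace of the idempotent $\Ec^{(1^k)}$ on the weight space of $I$. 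That trace is the rank of $\Ec^{(1^k)}$ there, which by Schur--Weyl duality is $1$ if the even indices of $I$ are distinct and $0$ otherwise.
\end{enumerate}
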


\begin{proof}
It is easy to see that $\Phi(\alpha_k)=\mathbb{S}_{(1^k)}(\mathcal{X}_m, -\mathcal{Y}_n)$. So the map $\Phi |_{\mathfrak{B}_{m|n}}$ is an epimorphism by the fundamental theorem of supersymmetric polynomials. Moreover, $\Phi|_{\mathfrak{B}_{m|n}}$ is an isomorphism because of the Littlewood correspondence III in theorem \ref{t:Litt3}. So $\{ \sum_{I}\frac{ \Imm_{\chi_q^{\lambda}}(X_I)}{\alpha_{\mathrm{q}^2}(I)}\mid \lambda \in H(m,n)\}$ is a basis of $\mathfrak{B}_{m|n}$.
\end{proof}

\begin{theorem}\label{rela-SchurPower}
Let $\lambda \vdash r$, then
$$\sum_{|I|=r}\frac{ \Imm_{\chi_q^{\lambda}}(X_I)}{\alpha_{\mathrm{q}^2}(I)}=\frac{\Imm_{\chi^{\lambda}} (\Gamma_r)}{r!},$$
where the immanants on the  right hand side are the classical immanants for non-super matrices  and the lower Hessenberg matrix  $\Gamma_r$ is
$$
\Gamma_r=\begin{pmatrix}
\gamma_1 & 1 &  & &  \\
\gamma_2 & \gamma_1 & 2 &  & \\
\gamma_3 & \gamma_2 & \gamma_1 & 3 & \\
\vdots & \vdots & \ddots & \ddots &  \ddots\\
\gamma_r & \gamma_{r-1} & \cdots & \cdots & \gamma_1
\end{pmatrix}.$$
\end{theorem}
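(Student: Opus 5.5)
By Theorem \ref{t:Litt3}, the left-hand side equals $\mathbb S_\lambda(\omega_1,\ldots,\omega_m,-\varpi_1,\ldots,-\varpi_n)$. The same theorem gives $\gamma_k=p^{(k)}_{m,n}(\omega,-\varpi)$ for every $k\ge1$. Write $\mathbb S_\lambda$ and $p_k$ for the supersymmetric Schur and power sum functions in these arguments. The identity then reduces to a statement about symmetric functions in the commutative algebra $\mathfrak B_{m|n}$:
\[
\mathbb S_\lambda=\frac{1}{r!}\Imm_{\chi^\lambda}(\Gamma_r),
\]
where the entries $\gamma_k$ of $\Gamma_r$ are replaced by $p_k$. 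Because the specialization from $\Lambda$ to supersymmetric functions is an algebra homomorphism sending $s_\lambda\mapsto \mathbb S_\lambda$ and $p_k\mapsto p^{(k)}_{m,n}$, it suffices to prove the classical identity $s_\lambda=\frac1{r!}\Imm_{\chi^\lambda}(\Gamma_r(p))$ in the ring $\Lambda$ of symmetric functions. All entries of $\Gamma_r$ commute, so the classical immanant $\sum_{\sigma}\chi^\lambda(\sigma)\prod_i(\Gamma_r)_{i,\sigma(i)}$ makes sense.

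For the classical identity, I would expand $\Imm_{\chi^\lambda}(\Gamma_r)=\sum_{\sigma\in\Sym_r}\chi^\lambda(\sigma)\prod_{i}(\Gamma_r)_{i,\sigma(i)}$. The matrix $\Gamma_r$ is lower Hessenberg, so a term is nonzero only if $\sigma(i)\le i+1$ for all $i$. Such permutations are products of disjoint increasing consecutive cycles $(a,a+1,\ldots,b)$, one for each block of a composition $(c_1,\ldots,c_l)$ of $r$. A block occupying positions $a,\ldots,b$ uses the superdiagonal entries $a,a+1,\ldots,b-1$ and the entry $(\Gamma_r)_{b,a}=p_{b-a+1}$. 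The product over one $\sigma$ is therefore $\prod_j p_{c_j}$ times the product of the superdiagonal integers $i$, taken over all $i$ that are not block ends.

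The cycle type of $\sigma$ is the partition $\rho$ obtained by sorting $(c_j)$. The immanant thus becomes $\sum_{\rho\vdash r}\chi^\lambda(\rho)p_\rho\cdot N_\rho$, where $N_\rho$ is the sum, over all compositions rearranging $\rho$, of $\prod_{i\notin\{\text{partial sums}\}}i$. The key combinatorial step is to show $N_\rho=r!/z_\rho$, which equals the number of permutations of cycle type $\rho$. Given this, the immanant is $r!\sum_\rho z_\rho^{-1}\chi^\lambda(\rho)p_\rho=r!\,s_\lambda$ by the Frobenius formula, and the theorem follows.

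I expect the identity $N_\rho=r!/z_\rho$ to be the main obstacle. I would prove it by a bijection: build a permutation of $[r]$ by inserting $1,2,\ldots,r$ successively. Element $i+1$ either starts a new cycle or is inserted into the currently open cycle after one of its $i$ predecessors (the weight factor $i$). Reading cycles in the order they close gives the composition, and this is the standard "fundamental correspondence" bijection. Alternatively, I would verify the identity via generating functions: the Hessenberg determinant-like expansion with weights $i$ corresponds to the exponential formula $\sum_r h_r t^r=\exp(\sum_k p_kt^k/k)$ generalized to characters.

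Finally, I would check the cases $\lambda=(r)$ and $\lambda=(1^r)$ against the Newton identities of Theorem \ref{Newton-iden}. There the immanant becomes a permanent or determinant, recovering the known formulas $r!\,h_r=\mathrm{per}$ and $r!\,e_r=\det$ of the Hessenberg matrix. This confirms that the signs and normalization are consistent, including the $(-1)$ conventions hidden in $\gamma_k=p^{(k)}_{m,n}(\omega,-\varpi)$.
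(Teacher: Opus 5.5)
Your argument is correct and has the same skeleton as the paper's proof: reduce to a classical identity for immanants of the Hessenberg matrix of power sums, then transport it into $\mathfrak B_{m|n}$. The two halves are handled differently, though.

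\begin{itemize}
\item \textbf{Transport.} The paper uses the isomorphism $\Phi|_{\mathfrak B_{m|n}}$ of Theorem~\ref{iso-sym} together with Theorem~\ref{quantum-JT}. You use the evaluation at $(\omega,-\varpi)$ from Theorem~\ref{t:Litt3}.
\item \textbf{Classical formula.} The paper only asserts $r!\,\mathbb S_\lambda=\Imm_{\chi^\lambda}(\mathfrak P_r)$, deriving just the case $\lambda=(1^r)$ from Newton's identities and Cramer's rule. You actually prove it: Hessenberg expansion over compositions, then $N_\rho=r!/z_\rho$, then the Frobenius formula. So your version is more self-contained exactly where the real content lies.
\end{itemize}

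Incidentally, neither transport device is needed. Since $\Lambda_{\mathbb Q}=\mathbb Q[e_1,e_2,\ldots]$ is free, $e_k\mapsto\alpha_k$ defines a homomorphism $\Lambda_{\mathbb Q}\to\mathfrak B_{m|n}$. It sends $s_\lambda$ to the left-hand side by the dual Jacobi--Trudi formula and Theorem~\ref{quantum-JT}, and it sends $p_k$ to $\gamma_k$ by Theorem~\ref{Newton-iden}. This avoids the algebraic closure, the elements $\omega_j,\varpi_i$, and the injectivity of $\Phi$.

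The one step to repair is your bijection for $N_\rho=r!/z_\rho$, which is inconsistent as written. Throughout, write $s_1<\cdots<s_l=r$ for the partial sums of a composition $c$ and $S=\{s_1,\ldots,s_{l-1}\}$.

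\begin{itemize}
\item If value $i+1$ may be placed after any of $1,\ldots,i$, it can enter an earlier cycle, and then the cycle type is not determined by the composition. For $r=4$ with new cycles opened by $1$ and $3$ you obtain $(1\,4\,2)(3)$, $(1\,2\,4)(3)$ and $(1\,2)(3\,4)$. The weight $3$ of the composition $(2,2)$ should instead count the three permutations of type $(2,2)$.
\item If value $i+1$ must go into the currently open cycle, there are fewer than $i$ slots.
\end{itemize}

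The correct device is Foata's fundamental transformation. Write each cycle with its largest element first, order the cycles by increasing largest element, and erase the parentheses. Permutations whose cycle lengths in this order are $c_1,\ldots,c_l$ then correspond to words whose left-to-right maxima sit exactly at positions $1,s_1+1,\ldots,s_{l-1}+1$. Choose the relative rank of each letter among its predecessors: position $i+1$ has one choice if it is a record and $i$ choices otherwise. This gives exactly $\prod_{i\in[r-1]\setminus S}i$.

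Alternatively, observe that $N_\rho=\sum_c r!/(s_1\cdots s_l)$, summed over compositions $c$ rearranging $\rho$. Iterating $rh_r=\sum_k p_kh_{r-k}$ gives $h_r=\sum_c p_{c_1}\cdots p_{c_l}/(s_1\cdots s_l)$ over all compositions $c$ of $r$. Comparing with $h_r=\sum_\rho p_\rho/z_\rho$ yields the identity.
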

\begin{proof}
By the Newton identities in Theorem \ref{Newton-iden} and Cramer's rule, we have that
\[
\alpha_{r}=\frac{\det(\Gamma_r)}{r!}.
\]
Moreover, the Schur supersymmetric polynomials can be written as
\[
r!\ \mathbb{S}_{\lambda}=\Imm_{\chi^{\lambda}}(\mathfrak{P}_r),
\]
where the matrix $\mathfrak{P}_r$ is  obtained by replacing  $\gamma_i$ with the power sum supersymmetric  polynomial $p_{m|n}^{(i)}$ in matrix $\Gamma_r$. According to the isomorphism $\Phi|_{\mathfrak{B}_{m|n}}$ and  the quantum super Goulden-Jackson identities, we have that
\ben
\bal
\sum_{|I|=r}\frac{ \Imm_{\chi_q^{\lambda}}(X_I)}{\alpha_{\mathrm{q}^2}(I)}=
\sum_{\mu}K_{\lambda^{T},\mu}^{-1}\alpha_{\mu_1}\cdots \alpha_{\mu_{\lambda_1}}
=\frac{\Imm_{\chi^{\lambda}}(\Gamma_r)}{r!}.
\eal
\een
\end{proof}

\bibliographystyle{amsalpha}

\end{document}